\theoremstyle{plain}
\newtheorem{theorem}{Theorem}
\newtheorem{lemma}[theorem]{Lemma}
\newtheorem{proposition}[theorem]{Proposition}
\theoremstyle{definition}
\theoremstyle{remark}
\newtheorem{remark}[theorem]{Remark}
\theoremstyle{condition}
\newtheorem{condition}[theorem]{Condition}
\newcommand{\R}{\mathbb{R}}
\newcommand{\N}{\mathbb N}
\newcommand{\E}{\mathbb{E}}
\newcommand{\Var}{\mathbb{V}ar}
\newcommand{\var}{\Var}
\newcommand{\Cov}{\mathbb{C}ov}
\newcommand{\x}{\mathbf{x}}
\newcommand{\Prob}{\mathbb{P}}
\newcommand{\dist}{\mathcal{D}}
\newcommand{\LH}[1]{\left\|#1\right\|_{\mathcal{H}}}
\newcommand{\InerH}[2]{\left\langle #1,#2\right\rangle_{\mathcal{H}}}
\newcommand{\Ind}{\mathds{1}}
\newcommand{\ind}{\Ind}
\newcommand{\unitball}{\omega\in\mathcal{H}:\LH{\omega}=1}
\newcommand{\GCM}{\boldsymbol{\operatorname{GCM}}}
\newcommand{\wGCM}{\boldsymbol{\operatorname{wGCM}}}
\newcommand{\KGCM}{\boldsymbol{\operatorname{KGCM}}}
\newcommand{\MMD}{\boldsymbol{\operatorname{MMD}}}
\newcommand{\Q}{Q}
\newenvironment{customCondition}[1]
  {\innercustomthm}
  {\endinnercustomthm}
  \crefname{innercustomthm}{Condition}{Conditions}
\title{A general framework for the analysis of kernel-based tests}
\author{Tamara Fern\'andez \thanks{Faculty of Engineering and Science, Universidad Adolfo Ib\'a\~nez, Chile. t.a.fernandez.aguilar@gmail.com} \and Nicol\'as Rivera\thanks{Instituto de ingenier\'ia Matem\'atica, Universidad de Valpara\'iso, Chile. n.a.rivera.aburto@gmail.com}}
\date{\today}
\begin{document}

\maketitle
\begin{abstract}
{Kernel-based tests provide a simple yet effective framework that use the theory of reproducing kernel Hilbert spaces to design non-parametric testing procedures. In this paper we propose new theoretical tools that can be used to study the asymptotic behaviour of kernel-based tests in several data scenarios, and in many different testing problems. Unlike current approaches, our methods avoid using lengthy $U$ and $V$ statistics expansions and limit theorems, that commonly appear in the literature, and works directly with random functionals on Hilbert spaces. Therefore, our framework leads to a much simpler and clean analysis of kernel tests, only requiring  mild regularity conditions. Furthermore, we show that, in general, our analysis cannot be improved by proving that the regularity conditions required by our methods are both sufficient and necessary. To illustrate the effectiveness of our approach we present a new kernel-test for the conditional independence testing problem, as well as new analyses for already known kernel-based tests.}
\end{abstract}
\section{Introduction}\label{sec:introduction}

The aim of this paper is to introduce new general tools for the analysis of kernel-based methods in the context of hypothesis testing. For this purpose,  consider the following general framework. Let  $X_1,\ldots, X_n$ be a collection of data points, and consider a null hypothesis of interest which we denote by $H_0$. Suppose that to assess the validity of $H_0$ we have access to a test-statistic $S_n(\omega)$ that depends (implicitly) on our data and on a deterministic real-valued weight function $\omega:\mathcal X\to \R$. Additionally assume that $S_n(\omega)$ satisfies that  $S_n(\omega)\approx 0$ for any fixed $\omega$ under the null hypothesis. A test procedure based on $S_n(\omega)$ works as follows. Choose a function $\omega$ and compute $S_n(\omega)$. If we observe  $S_n(\omega)\approx 0$ then we do not reject the null hypothesis, but if we observe that $S_n(\omega)$ is far away from zero, then we use this as evidence to reject the null. Of course, we might still have that  $S_n(\omega)\approx0$ under the alternative hypothesis for some functions $\omega$, and in such case the test-statistic $S_n(\omega)$ will perform poorly.

In the previous setting, we need to carefully choose the weight function $\omega$ in order to have a robust test that is able to distinguish between the null and the alternative hypotheses. Arguably, there are two main approaches that can be followed to tackle the problem of choosing an appropriate weight function: we can learn the weight function $\omega$ from our data, i.e. an adaptive weight approach, or we can combine several weight functions into a single test-statistic, which is the path we follow in this paper. 

Following the idea of combining several weight functions into a single test, a simple proposal is to consider as test-statistic the random variable
\begin{align}\label{eqn:defiTn}
    \Psi_n=\sup_{\omega\in\mathcal{H}:\LH{\omega}=1}S_n(\omega)^2,
\end{align}
where $\mathcal{H}$ is a separable reproducing kernel Hilbert space (RKHS). While in principle we could have taken supremum over an arbitrary space of functions, we chose the unit ball of an RKHSs since the nice properties of RKHSs in addition to some regularity conditions over the general test-statistic $S_n(\omega)$ will guarantee good properties of $\Psi_n$, which will be fundamental to construct a testing procedure. In particular, we will assume that $S_n(\omega)$ is linear on its argument $\omega$, that is  $S_n(a\omega_1+b\omega_2) = aS_n(\omega_1)+bS_n(\omega_2)$ for any $a,b\in \R$ and $\omega_1,\omega_2\in\mathcal{H}$, then it will follow that $\Psi_n$ can be evaluated exactly via a closed-form expression. In the previous case we say that $S_n(\omega)$ is a linear test-statistic, and we refer to $\Psi_n$ to as the kernelisation of $S_n$. This simple idea is the base of what is known as a kernel-based test \cite{gretton2006kernel}, and has been implicitly and explicitly applied in many contexts.  In the literature, testing procedures based on test-statistics of the form of \cref{eqn:defiTn} are usually refer to as kernel test-statistics, and the whole testing procedure derived from it to as a kernel test. 

To illustrate the kernel procedure based on a test-statistic of the form \cref{eqn:defiTn}, we consider the best known example of kernel tests: the maximum mean discrepancy ($\MMD$), which was introduced in the seminal work of  \citet{gretton2006kernel}. The $\MMD$ is used to assess the null hypothesis $H_0:F_0=F_1$, where $F_0$ and $F_1$ are both unknown distribution functions, based on two independent samples $X_1,\ldots,X_{n_0}\overset{i.i.d.}{\sim}F_0$ and $Y_1,\ldots,Y_{n_1}\overset{i.i.d.}{\sim}F_1$. Let $n = n_0+n_1$, then the $\MMD$ is defined as
\begin{align*}
\MMD(\{X_i\}_{i=1}^{n_0},\{Y_i\}_{i=1}^{n_1})=\sup_{\unitball}S_n(\omega)\qquad\text{where}\qquad S_n(\omega)= \frac{1}{n_0}\sum_{i=1}^{n_0} \omega(X_i)-\frac{1}{n_1}\sum_{j=1}^{n_1}\omega(Y_j).
\end{align*}

Notice that the $\MMD^2(\{X_i\}_{i=1}^{n_0},\{Y_i\}_{i=1}^{n_1})$ is clearly of the form of \cref{eqn:defiTn}, and that $S_n$ satisfies the properties previously described, that is, it is linear on $\omega$. Moreover, under the null hypothesis (and for large $n$) we expect $S_n(\omega)\approx0$. However, observe that $S_n(\omega)$ might give values close to 0 under the alternative if we choose a bad weight function (e.g. consider the trivial example where $\omega = 0$). Thus, taking supremum over a space of functions is a sensible choice to make $S_n$ more robust against alternatives.

Since the development of the $\MMD$, a lot of research has been conducted in the area of kernel-based test.  In the context of Goodness-of-Fit the most common examples have been proposed by kernelising a Stein's operator, resulting in tests that are commonly referred to as Kernel Stein Discrepancy (KSD) tests, and have been obtained for several data-domains such as $\R^d$ \cite{chwialkowski2016kernel,liu2016kernelized}, point processes \cite{yang2019stein}, random graph models \cite{xu2021stein}, among other. Kernel methods have also been applied for the problem of testing independence, where the main objective has been the analysis of the so-called Hilbert-Schmidt independence  criterion (HSIC) \cite{gretton2005measuring,gretton2007kernel,smola2007hilbert}. Other testing problems where kernel methods have been applied are: conditional independence \cite{zhang2012kernel, doran2014permutation}, composite goodness-of-fit \cite{key2021composite}, and several testing problems in survival analysis \cite{fernandez2020kernelized,fernandez2021reproducing,fernandez2021kernel, ditzhaus2022multiple}. One of the advantages of kernel methods is that they can virtually be applied to any type of data structure, including graphs, strings, sets, etc, and so testing procedures for those type of data can be designed. We refer to reader to \cite{chen2020kernel} for an introductory review of kernel-based tests, and to \cite{muandet2017kernel, hofmann2008kernel} for a general overview of kernel methods and their use in Statistics and Machine Learning.

Despite the vast literature on kernel-based methods, up to the best of our knowledge there are no works aiming towards finding a unified framework to analyse kernel-based tests, and thus, most of the existing tests and results are derived and analysed by using first principles in a case-by-case basis despite the fact that many similarities are presented in the analyses: i) most previous works have as a main object of interest a random variable of the form of $\Psi_n$ which arises, implicitly or explicitly, from a linear test-statistic $S_n(\omega)$, ii) most works base their analyses on writing $\Psi_n$ in a $V$-statistic shape, iii) the limit distribution of $\Psi_n$ can be expressed as a (infinite) linear combination of independent chi-squared random variables, and it is derived from $V$-statistics limit theorems, iv) in most works the same re-sample schemes are used, being wild bootstrap the most common.

In an effort to organise the common ideas found in the literature, we provide a unified analysis of kernelised test-statistics. Our main goals are: i) to avoid lengthy computations that usually appear by expressing the kernelised test-statistic $\Psi_n$ as an object that resembles a $U/V$-statistic, ii) to be able to use already known results for $S_n(\omega)$ in the analysis of $\Psi_n$ (which are usually much easier to obtain), and iii) to provide a systematic approach to analyse the asymptotic behaviour of $\Psi_n$ and resampling schemes.

Our main idea to achieve our goals is to completely avoid expanding $\Psi_n$ as a $U/V$-statistic and work directly by looking at $S_n$ as a random functional on the Hilbert space $\mathcal H$, and looking for conditions that allow us to extrapolate limiting results of $S_n(\omega)$, for fixed $\omega \in \mathcal H$, to $\Psi_n$ . Working with random functionals is much simpler, and indeed our analysis is based on first principles of Hilbert space valued random variables. At a high level, we have that 
\begin{itemize}
    \item[i)] Under the null hypothesis, some regularity conditions, and appropriate scaling it holds that
    \begin{align*}
\text{if for all $\omega\in\mathcal{H}$,}\quad S_n(\omega)\overset{\dist}{\to} N(0,\sigma^2_{\omega})\quad \text{then} \quad \Psi_n\overset{\dist}{\to} \sum_{i=1}^\infty\lambda_iZ_i^2,
\end{align*}
when the number of data points $n$ tends to infinity. The variables $Z_i$ are i.i.d. standard normal random variables, and $\lambda_1,\lambda_2,\ldots$ are non-negative constants. More details are given in \Cref{thm:ConvergenceSumChiSq}.  
\item[ii)] Under the alternative hypothesis, some regularity conditions, and appropriate scaling it holds that
\begin{align*}
   \text{if for all $\omega\in\mathcal{H}$,}\quad  S_n(\omega){\to} c(\omega)\quad\text{then}\quad \Psi_n{\to} ({c}^\star)^2 = \sup_{\unitball} c(\omega)^2,
\end{align*}
where convergence holds almost surely or in probability. More details are given in \Cref{Thm:alternati}.
\end{itemize}

We will see that our approach not only can be applied to the kernelised test-statistic $\Psi_n$ but also to a bootstrapped version of it. Hence, our methods not only gives us limiting results about $\Psi_n$, but gives asymptotic guarantees for the whole testing procedure based on the test-statistic $\Psi_n$.

To show the effectiveness of our approach, we provide three applications in \Cref{sec:examples}. The first two are already known in the literature, so we provide a new yet very concise and much simpler analysis with our tools. The third application is a novel test for conditional independent testing that kernelises the very recently proposed Weighted Generalised Covariance Measure \cite{ scheidegger2021weighted} which is a weighted generalisation of the introduced Generalised Covariance Measure \cite{shah2020hardness}.

For the rest of the paper we adopt standard notation used in Statistics. In particular, we write $\overset{a.s}{\to}$, $\overset{\Prob}{\to}$, and $\overset{\mathcal D}{\to}$ to denote convergence almost surely, in probability and in distribution (in law), respectively. All limits are taken when $n$, the number of data points, tend to infinity unless explicitly said otherwise.

\section{Convergence of kernelised linear test-statistics}\label{sec:convergence}

Consider a Hilbert space $\mathcal H$ of real functions $\omega: \mathcal X \to \R$ with inner product $\InerH{\cdot}{\cdot}$. We say that $\mathcal H$ is a reproducing kernel Hilbert space (RKHS) if the evaluation functional $E_x:\omega\to \omega(x)\in \R$ is bounded for each $x\in \mathcal X$. Then by the Riesz's representation theorem it exists a unique $K_x\in \mathcal H$ such that $E_x\omega = \InerH{K_x}{\omega}$. For $x,y\in \mathcal X$, we denote by $K(x,y) = \InerH{K_x}{K_y}$ the so-called reproducing kernel of $\mathcal H$ which is a symmetric and positive definite function $\mathcal X \times \mathcal X\to \R$. The kernel function $K$ characterises $\mathcal H$, and indeed, given a symmetric positive definite function $K$ there is a unique RKHS with such a function as reproducing kernel. In practice, we do not choose $\mathcal H$, but rather the kernel $K$. Standard kernel functions are the squared-exponential, the Ornstein–Uhlenbeck, and the rational quadratic kernels, see \cite[Chapter 2]{duvenaud2014automatic} for a short compendium of kernel functions.

Since we are interested on random variables on the RKHS $\mathcal H$ and on the space of bounded linear functionals $\mathcal H^\star$ (with the standard operator norm  $\| \cdot \|_{\mathcal H \to \R}$), we will assume that all the RKHSs in this work are separable (and so is the space of linear functionals). A sufficient condition to ensure that a RKHS of functions $\mathcal X\to \R$ is separable is that $\mathcal X$ is separable, and that the kernel function $K$ is continuous on $\mathcal X\times \mathcal X$. We assume some underlying probability space $(\Omega,\mathcal F, \Prob)$ and we consider random variables $\Omega\to \mathcal H$ or $\Omega \to \mathcal H^\star$ that are measurable with respect to the corresponding Borel-sigma algebra (other natural forms of measurability, such as the cylindrical sigma-algebra, are equivalent in the setting of separable Hilbert spaces). Since there is an isometry between $\mathcal H$ and $\mathcal H^\star$ we can define random variables in one space and move to its representation on the other space without worrying about measurability issues. In particular, given a random bounded functional $S$, the random variable $\|S\|_{\mathcal H \to \R} = \sup_{\unitball}S(\omega)$ is measurable, and for a random variable $\xi \in \mathcal H$, its norm $\LH{\xi}$ is also measurable. 

 In order  to keep the statistical motivation in our presentation, from now on, we refer to a random variable on $\mathcal H^\star$ to as a bounded linear test-statistic. We are interested on sequences of $(S_n)_{n\in \N}$ of bounded linear test-statistics in $\mathcal H^\star$, and more specifically on the random variable
\begin{align}
    \Psi_n:= \|S_n\|_{\mathcal H \to \R}^2= \sup_{\omega\in \mathcal H: \LH{\omega}=1} S_n(\omega)^2.
\end{align}

As discussed in the introduction, in practical scenarios of hypothesis testing, $S_n(\omega)$ is some simple test-statistic depending on the weight $\omega$, and the randomness is usually provided by the observed data (usually $n$ data points $X_1,\ldots, X_n$, making sense of the subindex $n$ as well). There are two cases of interest here. The first case is when $S_n(\omega)$ is already properly scaled and it converges to a normal distribution with mean 0 and variance $\sigma_{\omega}^2$ for each function $\omega$. This is the typical scenario under the null hypothesis. The other interesting case is when $S_n(\omega)$ converges (under proper scaling) to a constant $c(\omega)$ that depends on $\omega$ (and hopefully $c(\omega)\neq 0$), which usually holds under the alternative hypothesis. 

Let's start analysing the first case as it is the most interesting one. We start by assuming that the sequence of linear statistics $(S_n)_{n\geq1}$ satisfies the following \Cref{Cond:0bilinear}, where the G stands for ``Gaussian" as in Gaussian distribution.

\begin{customCondition}{$G_0$}\label{Cond:0bilinear}
There exists a continuous bilinear form $\sigma:\mathcal{H}\times\mathcal{H}\to\R$ such that for any $m\in\N$, the bounded linear test-statistic $S_n(w_1+\ldots +w_m)$ converges in distribution to a normal random variable with mean 0 and variance given by $\sum_{i=1}^m\sum_{j=1}^m \sigma(w_i,w_j)$. 
\end{customCondition}

\Cref{Cond:0bilinear} is rather natural and it is the most common behaviour for test-statistics under the null hypothesis, so more than a condition it is a framework.  

The bilinear form $\sigma:\mathcal{H}\times\mathcal{H}\to\R$ of \Cref{Cond:0bilinear} plays a fundamental role in our analysis as it will be important to define the potential limit (in law) of $S_n$. In this context, we are  interested on the linear transformation $T_{\sigma}:\mathcal H \to \mathcal H$ given by
 \begin{align}\label{eqn:Tsigma}
     (T_{\sigma}\omega)(x)=\sigma(\omega, K_x),\quad \forall x\in\mathcal{X}.
 \end{align}
 Recall that $K_x \in \mathcal H$ is the (unique) element associated with the evaluation function $E_x$ via the Riesz representation theorem. Assuming \Cref{Cond:2Tail} below we can show that $T_{\sigma}$  has co-domain $\mathcal H$, and that it is self-adjoint and trace class.
 
\begin{customCondition}{$G_1$}\label{Cond:2Tail}
For some orthonormal basis $(\phi_i)_{i\geq 1}$ of $\mathcal H$ we have $\sum_{i\geq 1} \sigma(\phi_i,\phi_i)<\infty$.
\end{customCondition} 

A standard exercise shows that if the condition above holds for one orthonormal basis, then it holds for every orthonormal basis. 
 
The fact that $T_{\sigma}$ is trace class and self-adjoint implies that there exists an orthonormal basis of $\mathcal H$, say $(\phi_i)_{i\geq 1}$, such that for every $i\geq 1$ we get $T_{\sigma}\phi_i = \lambda_i \phi_i$, and the sum of the eigenvalues $\lambda_i$ is bounded (they are non-negative as $T_{\sigma}$ is self-adjoint with finite multiplicity, except the potential eigenvalue 0). This will be important to define the (potential) limit of $S_n$.
 
Finally, to ensure that $S_n$ actually converges in distribution to a limiting functional we require the following tightness condition:

\begin{customCondition}{$G_2$}\label{Cond:1bound}
For some orthonormal basis $(\phi_i)_{i\geq 1}$ of $\mathcal H$, and for any $\varepsilon>0$,  we have that
\begin{align*}
    \lim_{i\to \infty} \limsup_{n\to \infty} \Prob(\|S_n\circ P_{V_i^{\perp}}\|_{\mathcal H \to \R}^2\geq \varepsilon) = 0,
\end{align*}
where $V_i$ is the span of $\phi_1,\ldots, \phi_i$, and $ P_{V_i^{\perp}}$ is an orthogonal projection onto $V_i^\perp$.  
\end{customCondition}
We remark that $\|S_n\circ P_{V_i^{\perp}}\|_{\mathcal H \to \R}^2 = \sum_{k=i+1}^{\infty} S_n(\phi_k)^2$, which is a useful expression to bound the probability above in conjunction with the Markov inequality. It can also we shown that if \Cref{Cond:1bound} holds for one basis, then it holds for any basis of $\mathcal H$, however, this takes some work and for completeness we provide a proof in \Cref{sec:aux}.

\begin{theorem}\label{thm:ConvergenceSumChiSq}
Let $(S_n)_{n\geq 1}$ be a sequence of bounded linear test-statistics satisfying \Cref{Cond:0bilinear,Cond:1bound,Cond:2Tail}.  Define the random functional $$S(\cdot) = \sum_{i=1}^{\infty} \sqrt{\lambda_i}\InerH{\phi_i}{\cdot}Z_i,$$ 
where  $(\lambda_i,\phi_i)_{i\geq1}$ are the eigenvalues and eigenvectors of the operator $T_{\sigma}:\mathcal{H}\to\mathcal H$ defined in \cref{eqn:Tsigma}, and  $(Z_i)_{i\geq1}$ are a collection of i.i.d. standard normal random variables. 

Then $S$ exists almost surely, i.e., the sum converges almost surely in $\mathcal H^\star$, and
\begin{align}
 S_n\overset{\mathcal D}{\to} S,  \qquad\text{and}\qquad \Psi_n= \|S_n\|^2_{\mathcal H \to \R}\overset{\mathcal D}{\to} \sum_{i=1}^{\infty}\lambda_i Z_i^2.
\end{align}
\end{theorem}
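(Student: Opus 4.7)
The overall strategy is the classical two-step scheme for convergence in law in a separable Hilbert space: prove convergence of all finite-dimensional marginals, then prove tightness, then combine. Throughout I work with the Riesz representatives $\tilde S_n,\tilde S\in\mathcal{H}$ of $S_n,S\in\mathcal{H}^{\star}$, so that $S_n(\omega)=\InerH{\tilde S_n}{\omega}$ and $\|S_n\|_{\mathcal{H}\to\R}=\LH{\tilde S_n}$; since the Riesz map is an isometry, convergence in $\mathcal{H}^\star$ is equivalent to convergence of these representatives in $\mathcal{H}$.

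\textbf{Step 1 (the limit exists).} Using the spectral decomposition of $T_{\sigma}$ together with Condition~$G_1$, one has $\sum_{i\geq1}\lambda_i=\sum_{i\geq1}\sigma(\phi_i,\phi_i)<\infty$. Hence the partial sums $\tilde S^{(N)}=\sum_{i=1}^{N}\sqrt{\lambda_i}Z_i\phi_i$ satisfy $\E\,\LH{\tilde S^{(N)}-\tilde S^{(M)}}^2=\sum_{i=M+1}^{N}\lambda_i\to 0$, so $(\tilde S^{(N)})$ is Cauchy in $L^2(\Omega;\mathcal{H})$. By Kolmogorov's theorem for sums of independent centred elements in a Hilbert space, the sum converges almost surely in $\mathcal{H}$, giving a well-defined element $\tilde S\in\mathcal{H}$ with $\LH{\tilde S}^2=\sum_{i\geq1}\lambda_i Z_i^2$.

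\textbf{Step 2 (finite-dimensional convergence).} I want to check that $(S_n(\eta_1),\ldots,S_n(\eta_m))\overset{\mathcal D}{\to}(S(\eta_1),\ldots,S(\eta_m))$ for every $\eta_1,\ldots,\eta_m\in\mathcal{H}$. By Cram\'er--Wold this reduces to the scalar statement $S_n(\sum_j a_j\eta_j)\to S(\sum_j a_j\eta_j)$ in law. Condition~$G_0$ gives exactly this: the left-hand side is asymptotically $\mathcal{N}(0,\sigma(\eta,\eta))$ with $\eta=\sum_j a_j\eta_j$. It remains to check that $S(\eta)$ has the same variance. A short computation using the identity $\sigma(\omega,\eta)=\InerH{T_\sigma\omega}{\eta}$ (which holds on the dense span of $\{K_x\}$ and extends by continuity) and the eigendecomposition $T_\sigma\omega=\sum_i\lambda_i\InerH{\phi_i}{\omega}\phi_i$ gives $\sigma(\eta,\eta)=\sum_i\lambda_i\InerH{\phi_i}{\eta}^2=\Var(S(\eta))$.

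\textbf{Step 3 (tightness).} This is the main technical step. I aim to show that $(\tilde S_n)_{n\geq1}$ is tight in $\mathcal{H}$. Equip $\mathcal{H}$ with the orthonormal basis $(\phi_i)$ of eigenvectors of $T_\sigma$ (Condition~$G_2$ holds for every basis by the remark preceding the theorem). Given $\varepsilon>0$, I will construct a compact set $K_\varepsilon\subset\mathcal{H}$ capturing mass $\geq1-\varepsilon$ uniformly in $n$. Using $G_2$, for each $k\geq1$ choose an index $i_k$ and then $n_k$ so that $\sup_{n\geq n_k}\Prob(\|P_{V_{i_k}^\perp}\tilde S_n\|^2>1/k)<\varepsilon/2^{k+1}$. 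By enlarging $i_k$ if necessary I can absorb the finitely many remaining indices $n<n_k$ (each $\tilde S_n$ is itself a tight $\mathcal{H}$-valued random variable, so its tail in the basis $(\phi_i)$ can be made arbitrarily small). The set
\[
K_\varepsilon=\bigl\{x\in\mathcal{H}:\|P_{V_{i_k}^\perp}x\|^2\leq 1/k\ \text{for all }k\geq1\bigr\}
\]
is bounded and has arbitrarily small tails in $(\phi_i)$, hence is compact in $\mathcal{H}$, and $\Prob(\tilde S_n\notin K_\varepsilon)\leq\sum_k\varepsilon/2^{k}\leq\varepsilon$ for every $n$. This yields tightness of $(\tilde S_n)$ in $\mathcal{H}$.

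\textbf{Step 4 (combine and apply continuous mapping).} Tightness plus convergence of all finite-dimensional projections identifies every subsequential weak limit with the distribution of $\tilde S$, so $\tilde S_n\overset{\mathcal D}{\to}\tilde S$ in $\mathcal{H}$, equivalently $S_n\overset{\mathcal D}{\to}S$ in $\mathcal{H}^\star$. Since $\omega\mapsto\LH{\omega}^2$ is continuous, the continuous mapping theorem gives $\Psi_n=\LH{\tilde S_n}^2\overset{\mathcal D}{\to}\LH{\tilde S}^2=\sum_{i\geq1}\lambda_iZ_i^2$, as required.

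\textbf{Expected main obstacle.} The delicate point is Step~3: Conditions $G_0$ and $G_1$ only give pointwise/variance information, and the passage from pointwise-in-$\omega$ CLTs to weak convergence in the infinite-dimensional space $\mathcal{H}^\star$ is exactly what $G_2$ is designed to fix. The care needed is in promoting the $\limsup_n$ control provided by $G_2$ to a uniform-in-$n$ bound and then packaging the tail controls into a single compact set; this is a standard construction but needs to be done cleanly.
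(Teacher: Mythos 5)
Your route is genuinely different from the paper's: you prove convergence of finite-dimensional distributions and then tightness of the Riesz representatives in $\mathcal H$, invoking Prokhorov's theorem, whereas the paper never proves tightness at all — it applies Billingsley's Theorem 3.2 to the truncations $S_n^U=\sum_{i\leq U}\InerH{\phi_i}{\cdot}S_n(\phi_i)$, checking $S_n^U\overset{\mathcal D}{\to}S^U$, $S^U\overset{\mathcal D}{\to}S$, and \Cref{Cond:1bound} as the approximation condition. Your Steps 1, 2 and 4 are correct (Step 2 matches the paper's computation $\boldsymbol{\Sigma}_{ij}=\lambda_i\delta_{ij}$ via \Cref{prop:PropertiesofT}), and the two schemes are morally equivalent; the paper's choice is precisely what lets it avoid the delicate point you yourself flag.

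There is, however, a concrete gap in Step 3: the set $K_\varepsilon=\{x\in\mathcal H:\LH{P_{V_{i_k}^\perp}x}^2\leq 1/k\ \forall k\}$ is \emph{not} bounded, hence not compact. Indeed every $x$ in the finite-dimensional subspace $V_{i_1}$ satisfies $P_{V_{i_k}^\perp}x=0$ for all $k$ (the $i_k$ are increasing), so $V_{i_1}\subseteq K_\varepsilon$ and $K_\varepsilon$ contains an unbounded linear subspace. Controlling the tails in the basis is not enough; the standard compact set in a Hilbert space is $\{x:\LH{x}\leq M\}\cap K_\varepsilon$, and you must additionally show that $M$ can be chosen with $\sup_n\Prob(\LH{\tilde S_n}>M)<\varepsilon/2$. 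This uniform stochastic boundedness does not follow from \Cref{Cond:1bound} alone: you need to split $\LH{\tilde S_n}^2=\sum_{j\leq i}S_n(\phi_j)^2+\LH{P_{V_i^\perp}\tilde S_n}^2$, bound the tail term by \Cref{Cond:1bound}, and bound the head term by combining the finite-dimensional CLT of \Cref{Cond:0bilinear} with $\sum_j\sigma(\phi_j,\phi_j)<\infty$ from \Cref{Cond:2Tail} (this is exactly the estimate \cref{eqn:random55j39h2} established in \Cref{prop:G2general} of the appendix). With that extra lemma inserted, your tightness argument — including the device of enlarging $i_k$ to absorb the finitely many indices $n<n_k$ — goes through and the proof is complete.
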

We also show that the conditions imposed in \Cref{thm:ConvergenceSumChiSq} are also necessary.

\begin{theorem}\label{thm:necessity}
Let $(S_n)_{n\geq1}$ be a sequence of bounded linear test-statistics in  $\mathcal H^\star$, and define $S(\cdot) = \sum_{i=1}^{\infty} \sqrt{\lambda_i} Z_i \InerH{\phi_i}{\cdot}$, where $(\lambda_i)_{i\geq0}$ are positive constants and $(\phi_i)_{i\geq 1}$ is an orthonormal basis of $\mathcal H$. Suppose that $S$ converges almost surely in $\mathcal H^\star$ (i.e. almost surely the truncated sums are  Cauchy sequences in $\mathcal H^\star$). Then, if $S_n\overset{\mathcal D}{\to} S,$
we have that \Cref{Cond:0bilinear,Cond:1bound,Cond:2Tail} hold.
\end{theorem}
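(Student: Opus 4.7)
The natural candidate for the bilinear form is $\sigma(\omega_1,\omega_2):=\E[S(\omega_1)S(\omega_2)]=\sum_{i\geq 1}\lambda_i\InerH{\phi_i}{\omega_1}\InerH{\phi_i}{\omega_2}$, so I would begin by checking this is well-defined. The hypothesis that $S$ converges a.s.\ in $\mathcal H^\star$ is, via Riesz's isometry $\mathcal H^\star\cong\mathcal H$, the statement that the partial sums $\xi_N:=\sum_{i=1}^N\sqrt{\lambda_i}Z_i\phi_i$ are Cauchy a.s.\ in $\mathcal H$; since by orthonormality $\|\xi_N-\xi_M\|_{\mathcal H}^2=\sum_{i=M+1}^N\lambda_iZ_i^2$, the series $\sum_i\lambda_iZ_i^2$ converges a.s., which by Kolmogorov's three-series theorem (or an elementary argument using $\E Z_i^2=1$) forces $\sum_{i\geq 1}\lambda_i<\infty$. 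This trace-class bound immediately yields continuity of $\sigma$ via Cauchy--Schwarz and Parseval, $|\sigma(\omega_1,\omega_2)|\leq(\max_i\lambda_i)\LH{\omega_1}\LH{\omega_2}$, and also gives \Cref{Cond:2Tail} outright because $\sigma(\phi_i,\phi_i)=\lambda_i$.

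For \Cref{Cond:0bilinear}, by bilinearity it is enough to prove $S_n(\omega)\overset{\mathcal D}{\to}N(0,\sigma(\omega,\omega))$ for every $\omega\in\mathcal H$. The evaluation functional $\text{ev}_\omega:T\mapsto T(\omega)$ is $\LH{\omega}$-Lipschitz on $\mathcal H^\star$, so the continuous mapping theorem applied to $S_n\overset{\mathcal D}{\to}S$ gives $S_n(\omega)\overset{\mathcal D}{\to}S(\omega)$. Since $\sum_i\lambda_i\InerH{\phi_i}{\omega}^2\leq(\sum_i\lambda_i)\LH{\omega}^2<\infty$, the series defining $S(\omega)$ converges a.s.\ to a centred Gaussian with variance $\sigma(\omega,\omega)$; applying this with $\omega=w_1+\cdots+w_m$ and expanding $\sigma(\omega,\omega)$ by bilinearity yields the condition.

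For \Cref{Cond:1bound}, I would observe that for each fixed $i$ the map $T\mapsto\|T\circ P_{V_i^\perp}\|_{\mathcal H\to\R}^2$ is continuous on $\mathcal H^\star$ (since it is Lipschitz-then-squared, with $\|P_{V_i^\perp}\|_{\text{op}}\leq 1$). Hence by the continuous mapping theorem and the Portmanteau inequality applied to the closed set $[\varepsilon,\infty)$,
\begin{align*}
\limsup_{n\to\infty}\Prob\bigl(\|S_n\circ P_{V_i^\perp}\|_{\mathcal H\to\R}^2\geq\varepsilon\bigr)\;\leq\;\Prob\bigl(\|S\circ P_{V_i^\perp}\|_{\mathcal H\to\R}^2\geq\varepsilon\bigr)\;=\;\Prob\Bigl(\sum_{k>i}\lambda_kZ_k^2\geq\varepsilon\Bigr).
\end{align*}
The right-hand side vanishes as $i\to\infty$ because $\sum_k\lambda_kZ_k^2$ converges a.s., so its tails tend to $0$ a.s.\ and hence in probability.

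I do not expect a serious technical obstacle: the only genuine input beyond standard weak-convergence machinery is the passage from a.s.\ convergence of $S$ in $\mathcal H^\star$ to the summability $\sum_i\lambda_i<\infty$, and the freedom to move between $\mathcal H$ and $\mathcal H^\star$ via Riesz. Once $\sigma$ is defined and its trace-class nature established, Conditions $G_0$--$G_2$ all reduce to continuous-mapping plus Portmanteau arguments applied to explicit Lipschitz functionals on $\mathcal H^\star$.
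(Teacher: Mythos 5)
Your proposal is correct and follows essentially the same route as the paper: deduce $\sum_i\lambda_i<\infty$ from the a.s.\ convergence of $S$, verify \Cref{Cond:0bilinear} via the continuous mapping theorem applied to evaluation functionals, read off \Cref{Cond:2Tail} from $\sigma(\phi_i,\phi_i)=\lambda_i$, and obtain \Cref{Cond:1bound} from continuity of $T\mapsto\|T\circ P_{V_i^\perp}\|_{\mathcal H\to\R}$ together with Portmanteau and the vanishing tails of $\sum_k\lambda_kZ_k^2$. The only differences are cosmetic (you invoke Kolmogorov's series theorems where the paper uses Doob's martingale convergence, and you state the Portmanteau bound as an inequality over the closed set $[\varepsilon,\infty)$, which is in fact slightly more careful than the paper's equality).
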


We continue our analysis with the second case of interest, which occurs when for every $\omega \in \mathcal H$,  $S_n(\omega)$ converges a.s. or in probability to a constant value $c(\omega)$. This is a typical situation under the alternative, and is much simpler to describe.

\begin{theorem}\label{Thm:alternati}
Consider a sequence $(S_n)_{n\geq 1}$ of linear test-statistics, and suppose that for each $\omega\in \mathcal H$ we have  $S_n(\omega) \overset{a.s.}{\to} c(\omega)$, where $c:\mathcal H \to \R$ is a deterministic functional. Define $c^\star$ as  $c^\star=\sup_{\unitball} c(\omega)$, and suppose $c^{\star} <\infty$. Then
\begin{align*}
\lim_{n\to \infty}\Psi_n =  \lim_{n\to \infty}\sup_{\unitball}S_n(\omega)^2= (c^\star)^2 \text{ }a.s.
\end{align*}
if and only if for some basis $(\phi_i)_{i\geq 1}$ of $\mathcal H$ we have $\lim_{u\to \infty} \limsup_{n\to \infty } \sum_{i=u+1}^{\infty}S_n(\phi_i)^2 = 0$ a.s.

Moreover, if for every $\omega\in \mathcal H$ we have $S_n(\omega)\overset{\Prob}{\to} c(\omega)$, then $\Psi_n\overset{\Prob}{\to} (c^\star)^2$ if and only if for some basis $(\phi_i)_{i\geq1}$ of $\mathcal H$ it holds $\lim_{u\to \infty} \limsup_{n\to \infty } \Prob(\sum_{i=u+1}^{\infty}S_n(\phi_i)^2\geq\varepsilon)=0$ for all  $\varepsilon>0$.
\end{theorem}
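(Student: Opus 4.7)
The plan is to pass through Parseval's identity: since each $S_n\in\mathcal{H}^\star$ is bounded linear on the separable RKHS $\mathcal H$, for any orthonormal basis $(\phi_i)_{i\geq 1}$
\[
\Psi_n \;=\; \|S_n\|_{\mathcal H\to\R}^2 \;=\; \sum_{i=1}^{\infty} S_n(\phi_i)^2.
\]
The pointwise limit $c$ is itself linear (inherited from the $S_n$, the null-set issue being harmless since $c$ is deterministic) and bounded by the hypothesis $c^\star<\infty$, hence $(c^\star)^2 = \sum_{i\geq 1} c(\phi_i)^2$. Fix such a basis and set $A_{n,u} = \sum_{i=1}^u S_n(\phi_i)^2$, $B_{n,u} = \sum_{i>u} S_n(\phi_i)^2$, so that $\Psi_n = A_{n,u} + B_{n,u}$. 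The key structural fact is that $A_{n,u}$ converges termwise as $n\to\infty$ for each fixed $u$, while $B_{n,u}$ is exactly the object controlled by the tail hypothesis.

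For the almost-sure direction, sufficiency comes from the sandwich $A_{n,u}\leq\Psi_n\leq A_{n,u}+B_{n,u}$: taking $\liminf_n$ and $\limsup_n$ and using $A_{n,u}\overset{a.s.}{\to}\sum_{i\leq u}c(\phi_i)^2$ yields
\[
\sum_{i\leq u} c(\phi_i)^2 \;\leq\; \liminf_n \Psi_n \;\leq\; \limsup_n \Psi_n \;\leq\; \sum_{i\leq u} c(\phi_i)^2 + \limsup_n B_{n,u}\quad\text{a.s.}
\]
Sending $u\to\infty$ forces $\Psi_n\to(c^\star)^2$ a.s.\ (the countable union of exceptional null sets, one per $u$ plus one for the tail hypothesis, is still null). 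Necessity is immediate from the identity $B_{n,u} = \Psi_n - A_{n,u}$, which then converges a.s.\ as $n\to\infty$ to $(c^\star)^2 - \sum_{i\leq u}c(\phi_i)^2$, a quantity vanishing as $u\to\infty$.

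The convergence-in-probability version follows the same template via the triangle bound
\[
|\Psi_n - (c^\star)^2| \;\leq\; \Big|A_{n,u} - \sum_{i\leq u} c(\phi_i)^2\Big| \;+\; \sum_{i>u} c(\phi_i)^2 \;+\; B_{n,u}.
\]
For sufficiency, given $\varepsilon,\eta>0$, choose $u$ large enough that the deterministic middle term is below $\varepsilon/3$ and $\limsup_n \Prob(B_{n,u}\geq\varepsilon/3)<\eta/2$; then convergence in probability of the finite sum $A_{n,u}$ makes the first-term contribution arbitrarily small, yielding $\limsup_n \Prob(|\Psi_n - (c^\star)^2|\geq\varepsilon)<\eta$. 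For necessity, $B_{n,u} = \Psi_n - A_{n,u}\overset{\Prob}{\to}(c^\star)^2 - \sum_{i\leq u}c(\phi_i)^2$ (a constant), so $\limsup_n \Prob(B_{n,u}\geq\varepsilon)=0$ once $u$ is large enough that this constant lies strictly below $\varepsilon$. The argument is essentially Parseval plus triangle-inequality bookkeeping, with no genuine obstacle; the only delicate point is that the tail hypothesis is stated in the order $\lim_u \limsup_n$, which matches precisely the order in which $u$ must be chosen before letting $n\to\infty$, so the pairing is automatic.
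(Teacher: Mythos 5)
Your proof is correct, and it departs from the paper's argument in one worthwhile way. The paper also starts from Parseval, $\Psi_n=\sum_i S_n(\phi_i)^2$, but it first builds a \emph{special} orthonormal basis aligned with the Riesz representer $\xi$ of $c$ (taking $\phi_1=\xi/\LH{\xi}$, so that $c(\phi_1)=c^\star$ and $c(\phi_i)=0$ for $i\geq 2$), reduces the claim to $\sum_{i\geq 2}S_n(\phi_i)^2\to 0$, and then applies the tail hypothesis \emph{in that constructed basis} --- which silently invokes the remark that the tail condition is basis-independent, since the hypothesis is only assumed for some basis $(\psi_i)$. You instead work directly in whatever basis the tail condition is assumed for, replace the "only $\phi_1$ survives" simplification by the identity $(c^\star)^2=\LH{\xi}^2=\sum_i c(\phi_i)^2$, and run a sandwich $A_{n,u}\leq \Psi_n\leq A_{n,u}+B_{n,u}$. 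This buys you two things: the sufficiency direction never needs the basis-change lemma, and your necessity argument actually establishes the tail condition for \emph{every} orthonormal basis rather than just exhibiting one. You also spell out the convergence-in-probability case, which the paper dispatches with "the same arguments"; your $\varepsilon/3$ bookkeeping is the right way to do it, and the order of quantifiers ($u$ chosen before $n\to\infty$) is handled correctly. The only point worth making explicit if you wrote this up is the one-line verification that $c$ is linear and that $\sup_{\LH{\omega}=1}|c(\omega)|=c^\star$ by antisymmetry, so that Riesz applies; you flag both, so there is no gap.
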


We remark that if the condition of \Cref{Thm:alternati} holds for one basis, then it holds for all basis at the same time. Note such condition is essentially \Cref{Cond:1bound}.

\subsection{Proofs of \Cref{thm:ConvergenceSumChiSq,thm:necessity,Thm:alternati}}

The proof of the theorems require the following basic properties of the operator $T_{\sigma}$.
\begin{proposition}\label{prop:PropertiesofT} Let $\sigma:\mathcal{H}\times\mathcal{H}\to\R$ be a continuous bilinear form satisfying  \Cref{Cond:2Tail}. Then, the operator $T_{\sigma}$ defined in \cref{eqn:Tsigma} satisfies:
\begin{enumerate}
    \item\label{property:HsubL}  $T_{\sigma}f \in \mathcal H$ for any $f \in \mathcal H$. 
    \item\label{property:isometryHs} $\InerH{T_{\sigma}f}{\omega}= \sigma(f,\omega)$  for any $f,\omega \in \mathcal H$.
    \item\label{property:TisSelfAdjoint} $T_{\sigma}:\mathcal H\to \mathcal H$ is self-adjoint.
    \item \label{property:compact} $T_\sigma$ is trace-class.
\end{enumerate}
\end{proposition}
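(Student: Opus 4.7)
The plan is to build $T_\sigma$ explicitly by applying Riesz's representation theorem to the functional $\omega \mapsto \sigma(f,\omega)$ and then using the reproducing property to identify the Riesz representer pointwise with $T_\sigma f$. Continuity of the bilinear form $\sigma$ yields $|\sigma(f,\omega)| \leq C\LH{f}\LH{\omega}$, so for each fixed $f \in \mathcal H$ the map $\omega \mapsto \sigma(f,\omega)$ is a bounded linear functional, and Riesz produces a unique $g_f \in \mathcal H$ with $\sigma(f,\omega) = \InerH{g_f}{\omega}$. Specialising to $\omega = K_x$ and invoking the reproducing property,
\begin{equation*}
    (T_\sigma f)(x) = \sigma(f, K_x) = \InerH{g_f}{K_x} = g_f(x),
\end{equation*}
so $T_\sigma f = g_f \in \mathcal H$. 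This delivers property \ref{property:HsubL} and, together with the Riesz identity, property \ref{property:isometryHs}; as a byproduct one also gets $\LH{T_\sigma f} \leq C\LH{f}$, so $T_\sigma$ is a bounded operator on $\mathcal H$.

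For property \ref{property:TisSelfAdjoint} I would first reduce to the case where $\sigma$ is symmetric. \Cref{Cond:0bilinear} only determines $\sigma$ up to its symmetric part, since the variance $\sum_{i,j}\sigma(w_i,w_j)$ is invariant under replacing $\sigma$ by $\tilde\sigma(u,v) = \tfrac12(\sigma(u,v) + \sigma(v,u))$; \Cref{Cond:2Tail} is also preserved. Assuming $\sigma$ symmetric from the outset, property \ref{property:isometryHs} immediately gives $\InerH{T_\sigma f}{g} = \sigma(f,g) = \sigma(g,f) = \InerH{T_\sigma g}{f} = \InerH{f}{T_\sigma g}$.

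For property \ref{property:compact}, the extra ingredient is positivity: combining property \ref{property:isometryHs} with \Cref{Cond:0bilinear} applied to $m=1$, $\InerH{T_\sigma f}{f} = \sigma(f,f)$ is the variance of the Gaussian limit of $S_n(f)$ and hence nonnegative. Being bounded, self-adjoint, and positive, $T_\sigma$ admits a positive square root $T_\sigma^{1/2}$, and for any orthonormal basis $(\phi_i)$ of $\mathcal H$,
\begin{equation*}
    \sum_i \LH{T_\sigma^{1/2}\phi_i}^2 = \sum_i \InerH{T_\sigma \phi_i}{\phi_i} = \sum_i \sigma(\phi_i,\phi_i) < \infty
\end{equation*}
by \Cref{Cond:2Tail}, so $T_\sigma^{1/2}$ is Hilbert–Schmidt and therefore $T_\sigma = T_\sigma^{1/2}\cdot T_\sigma^{1/2}$ is trace-class as a product of two Hilbert–Schmidt operators.

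The step I expect to be most delicate is property \ref{property:compact}: the key subtlety is that finiteness of the diagonal sum in a single orthonormal basis does not in general imply trace-class for a generic bounded operator, because trace-type sums are not basis-invariant outside the positive case. Positivity of $T_\sigma$ does the real work here, since it permits the square-root factorisation that upgrades a Hilbert–Schmidt estimate for $T_\sigma^{1/2}$ into the required trace-class conclusion for $T_\sigma$; everything else reduces to a careful but routine application of Riesz and the reproducing property.
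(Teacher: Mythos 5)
Your proof is correct and follows the same overall route as the paper for properties \ref{property:HsubL}--\ref{property:TisSelfAdjoint}: Riesz representation applied to $\omega\mapsto\sigma(f,\omega)$, identification of the representer via $\omega=K_x$, and symmetry of $\sigma$ for self-adjointness. (The paper bounds $A_f$ via a Cauchy--Schwarz inequality $|\sigma(f,\omega)|^2\leq\sigma(f,f)\sigma(\omega,\omega)$, which strictly speaking presupposes that $\sigma$ is positive semi-definite; your direct appeal to the operator bound $|\sigma(f,\omega)|\leq C\LH{f}\LH{\omega}$ from continuity is cleaner and needs no such assumption. Your observation that symmetry of $\sigma$ should either be assumed or obtained by symmetrisation is also a fair point the paper leaves implicit.) Where you genuinely diverge is property \ref{property:compact}: the paper simply computes $\sum_i\InerH{\phi_i}{T_\sigma\phi_i}=\sum_i\sigma(\phi_i,\phi_i)<\infty$ in one orthonormal basis and declares $T_\sigma$ trace-class, whereas you correctly flag that for a generic bounded operator a finite diagonal sum in a single basis does not imply trace-class, and you supply the missing ingredient: positivity of $T_\sigma$ (via $\sigma(f,f)$ being a limiting variance under \Cref{Cond:0bilinear}), the positive square root $T_\sigma^{1/2}$, the Hilbert--Schmidt bound $\sum_i\LH{T_\sigma^{1/2}\phi_i}^2<\infty$, and the factorisation of $T_\sigma$ as a product of two Hilbert--Schmidt operators. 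This is the standard rigorous argument and it patches a real (if routine) gap in the paper's one-line deduction; the only caveat is that positivity is imported from \Cref{Cond:0bilinear} rather than from the proposition's stated hypotheses, which is exactly the same implicit reliance the paper makes when it remarks later that $\sigma(\phi_i,\phi_i)$ ``is a variance, so it is non-negative.''
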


\begin{proof} We start by proving \cref{property:HsubL}. For any $f\in\mathcal{H}$, define $A_f:\mathcal H \to \R$ by $A_f\omega=\sigma(f,\omega)$ for all $\omega\in\mathcal{H}$. Clearly $A_f$ is linear since $\sigma$ is bilinear. Moreover, by continuity of $\sigma$, we have that $\sup_{\omega\in\mathcal{H}:\LH{\omega}=1}\sigma(\omega,\omega)<\infty$, and thus, by the Cauchy-Schwarz inequality,
\begin{align*}
    \sup_{\omega\in\mathcal{H}:\LH{\omega}=1}|A_f\omega|^2=\sup_{\omega\in\mathcal{H}:\LH{\omega}=1}|\sigma(f,\omega)|^2&\leq \sup_{\omega\in\mathcal{H}:\LH{\omega}=1}\sigma(\omega,\omega)\sigma(f,f)<\infty.
\end{align*}
Then, by the Riesz's representation theorem, for all $f\in\mathcal{H}$, there exists unique element $\xi_f\in \mathcal H$ such that for all $\omega \in \mathcal H$, $A_f\omega=\sigma(f,\omega) = \InerH{\xi_f}{\omega}$ holds. By choosing $\omega = K_x$ for any $x\in\mathcal{X}$, we get 
\begin{align}
    (T_{\sigma}f)(x) = \sigma(f,K_x) =A_{f}K_x= \InerH{\xi_f}{K_x} = \xi_f(x),
\end{align}
and thus $T_{\sigma}f = \xi_f\in \mathcal H$.

To prove \cref{property:isometryHs}, observe that \cref{property:HsubL} yields,  $T_{\sigma}f = \xi_f$ and thus $\InerH{T_{\sigma}f}{\omega} = \InerH{\xi_f}{\omega}  = \sigma(f,\omega)$. 

For \cref{property:TisSelfAdjoint}, by  the previous item and the symmetry of $\sigma$,  we have that 
\begin{align*}
\InerH{T_\sigma f}{\omega}=\sigma(f,\omega) = \sigma(\omega,f) =\InerH{T_{\sigma}\omega}{ f} = \InerH{ f}{T_{\sigma}\omega}.
\end{align*}

Finally, to prove \cref{property:compact}, consider an orthonormal basis $(\phi_i)_{i\geq 1}$ of $\mathcal{H}$, and observe  that
\begin{align*}
    \text{Trace}(T_\sigma)=\sum_{i=1}^\infty\InerH{\phi_i}{T_\sigma \phi_i}=\sum_{i=1}^\infty\sigma(\phi_i,\phi_i)<\infty,
\end{align*}
where the second equality is due to \cref{property:isometryHs}, and the last inequality is due to \Cref{Cond:2Tail}. We deduce then that $T_\sigma$ is trace-class.
\end{proof}

\subsubsection{{{Proof of \Cref{thm:ConvergenceSumChiSq,thm:necessity}}}}

\begin{proof}[Proof of \Cref{thm:ConvergenceSumChiSq}] By \Cref{prop:PropertiesofT} we have that $T_{\sigma}:\mathcal{H}\times\mathcal{H}\to\R$ defined in \cref{eqn:Tsigma} is self-adjoint and trace class in $\mathcal H$. Then, by the spectral theorem there exists an orthonormal basis $(\phi_i)_{i\geq 1}$ of $\mathcal H$ such that $T_{\sigma}\phi_i = \lambda_i \phi_i$ for all $i\geq 0$. Recall that the basis is countable since $\mathcal H$ is separable, and that the eigenvalues are all non-negative since $\lambda_i=\InerH{T_{\sigma}\phi_i}{\phi_i}=\sigma(\phi_i,\phi_i)\geq 0,$
by \cref{property:isometryHs} of \Cref{prop:PropertiesofT} (recall that $\sigma(\phi_i,\phi_i)$ is a variance, so it is non-negative). Note as well that $\sum_{i=1}^{\infty}\lambda_i<\infty$ as $T_{\sigma}$ is trace class.

Note that $S_n(\omega) = \sum_{i=1}^{\infty} \InerH{\phi_i}{\omega} S_n(\phi_i)$, and recall that the functional $S$ is defined by $S(\omega) = \sum_{i=1}^{\infty} \InerH{\phi_i}{\omega} \sqrt{\lambda_i}Z_i$, where $Z_i$ are i.i.d. standard normal random variables. Observe that $S$ is well-defined since $\sum_{i=1}^{\infty} \lambda_i Z_i^2$ converges a.s. due to \Cref{lemma:lambda3ST} (in \Cref{sec:aux}) since we have that $\sum_{i=1}^{\infty}\lambda_i<\infty$. Our proof also requires the definition of a partial version of $S_n$ and $S$, given by
\begin{align*}
    S_n^U = \sum_{i=1}^U\InerH{\phi_i}{\cdot} S_n(\phi_i)\quad \text{ and } \quad S^U(\omega) = \sum_{i=1}^{U} \InerH{\phi_i}{\omega} \sqrt{\lambda_i}Z_i.
\end{align*}

We will show that $S_n \overset{\mathcal D}{\to} S$ via an application of Theorem 3.2 of \citet{billingsley2013convergence}, which requires the following properties:
\begin{enumerate}[label = \roman*.]
    \item $ S_n^U\overset{\mathcal D}{\to} S^U$ as $n \to \infty$
    \item $S^U \overset{\mathcal D}{\to} S$ as $U \to \infty$
    \item for all $\varepsilon>0$, 
    \begin{align}\label{eqn:proofTheo1cond3}
       \lim_{U\to \infty}\limsup_{n\to \infty}\Prob\left(\|S_n-S_{n}^U\|_{\mathcal H \to \R}>\varepsilon\right) = 0.
    \end{align}
\end{enumerate}
For the first property, \Cref{Cond:0bilinear} tells us that the random vector  $\boldsymbol{S}_n^U = (S_n(\phi_1),\ldots,S_n(\phi_U))$ in $\R^U$ is such that $\boldsymbol{S}_n^U\overset{\dist}{\to}N(0,\boldsymbol{\Sigma})$, where   $\boldsymbol{\Sigma}_{ij}=\sigma(\phi_i,\phi_j)$ for any $i,j\in\{1,\ldots,U\}$. Moreover, by \Cref{prop:PropertiesofT}, it holds that
\begin{align*}
    \boldsymbol{\Sigma}_{ij}=\sigma(\phi_i,\phi_j)&=\InerH{T_\sigma\phi_i}{\phi_j}= \lambda_i\InerH{\phi_i}{\phi_j}= \lambda_i \delta_{ij}\end{align*}
where $\delta_{ij}=1$ if $i=j$, otherwise it is $0$. Then $\boldsymbol{\Sigma}=\text{diag}(\lambda_1,\ldots,\lambda_U)$, and thus  $\boldsymbol{S}_n^U$ converges in distribution to the vector $(\sqrt{\lambda_1}Z_1,\ldots,\sqrt{\lambda_U}Z_U)$, where $Z_1,\ldots,Z_U$ are independent and identically distributed standard normal random variables. Consequently, the continuous mapping theorem and the continuity of the transformation $\R^U \to \mathcal H^\star$ given by $\x \to \sum_{i=1}^U x_i\InerH{\phi_i}{\cdot}$ yields property i.

The second property follows immediately from the definition of $S_n^U$. Finally, for the last property, denote $V_U$ be the subspace generated by $\langle \phi_1,\ldots, \phi_U\rangle$, then note that $S_n -S_n^U=S_n\circ P_{V_U^{\perp}}$, so iii. follows directly from \Cref{Cond:1bound}.

Since the three conditions have been proved, Theorem 3.2 of \citet{billingsley2013convergence} yields that $S_n\overset{\mathcal D}{\to} S$, and by the continuous mapping theorem, we get  $\|S_n\|_{\mathcal H \to \R}^2 \overset{\mathcal D}{\to} \|S\|_{\mathcal H \to \R}^2 = \sum_{i=1}^{\infty} \lambda_i Z_i^2$.
\end{proof}

\begin{proof}[Proof of \Cref{thm:necessity}]
Since $S$ converges almost surely in $\mathcal H^\star$ we have that $\|S\|_{\mathcal{H}\to\R}^2 = \sum_{i\geq1} \lambda_i Z_i^2$ converges almost surely in $\R$, and so by \Cref{lemma:lambda3ST} (in \Cref{sec:aux}) we have $\sum_{i\geq1} \lambda_i<\infty$.

Let's verify \Cref{Cond:0bilinear}. For any given $\omega_1,\ldots, \omega_k \in \mathcal H$ the transformation $\mathcal H^\star \to \R^k$ given by $Q \to (Q(\omega_1),\ldots, Q(\omega_k))$ is continuous in $\mathcal H^\star$, then by the continuous mapping theorem
\begin{align}
    (S_n(\omega_1),\ldots,S_n(\omega_k)) \overset{\mathcal D}{\to} \left( \sum_{i=1}^\infty \sqrt{\lambda_i} \InerH{\phi_i}{\omega_1} Z_i,\ldots, \sum_{i=1}^\infty \sqrt{\lambda_i} \InerH{\phi_i}{\omega_k} Z_i \right).
\end{align}

The sums on the right-hand side term converge almost surely in $\R$ by the Doob's martingale convergence theorem: indeed, for any $\omega \in \mathcal H$, $M_t = \sum_{i=1}^t \sqrt{\lambda_i} Z_i \InerH{\phi_i}{\omega}$ is a 0 mean martingale with second moment $\sum_{i=1}^t \lambda_i \InerH{\phi_i}{\omega}^2 \leq \|\omega\|^2\sum_{i\geq1} \lambda_i$, i.e. uniformly bounded second moment, so the sum converges almost surely. We can now easily see that the right-hand side term has multi-variate Gaussian distribution of mean 0 and covariance matrix $\Sigma$ of $k\times k$ with entries $\Sigma_{p,q} = \sigma(\omega_p, \omega_q) = \sum_{i=1}^\infty\lambda_i \InerH{\phi_i}{\omega_p}\InerH{\phi_i}{\omega_q}$.

To verify \Cref{Cond:2Tail} just observe that $\sigma(\phi_i, \phi_i) = \lambda_i$, thus the condition holds since $\sum_{i\geq 1} \lambda_i<\infty$. 

Finally, to prove \Cref{Cond:1bound} note that for any subspace $V$ of $\mathcal H$, the transformation $\mathcal H^\star \to \R$ given by $Q\to \|Q\circ P_V\|_{\mathcal H\to \R}$ is continuous, where $P_V$ is the orthogonal projection onto $V$. Now, let $V_i$ be the span of $\phi_1,\ldots, \phi_i$, then
$$\|S_n\circ P_{V_i^{\perp}}\|_{\mathcal H\to \R} \overset{\mathcal D}\to \|S\circ P_{V_i^{\perp}}\|_{\mathcal H\to \R},$$
by the continuous mapping theorem and the fact that $S_n\overset{\mathcal D}{\to} S$. This implies that for any $\varepsilon> 0$ we have
\begin{align*}
    \limsup_{n\to \infty} \Prob\left(\|S_n\circ P_{V_i^{\perp }}\|_{\mathcal H\to \R}\geq\varepsilon\right) =  \Prob\left(\|S \circ P_{V_i^{\perp}}\|_{\mathcal H\to \R}\geq\varepsilon\right),
\end{align*}
and by taking limit when $i$ tends to infinity we get \Cref{Cond:1bound}, since $\|S\circ P_{V_i^{\perp}}\|^2 = \sum_{j=i+1}^{\infty} \lambda_j Z_j^2\to 0$ a.s. as $i$ tends to infinity.

\end{proof}

\subsubsection{Proof of \Cref{Thm:alternati}}

The proof of \Cref{Thm:alternati} is much simpler than the previous ones, and can be worked out directly from first principles.

\begin{proof}[Proof of \Cref{Thm:alternati}] Let's assume that $\lim_{u\to \infty} \limsup_{n\to \infty } \sum_{i=u+1}^{\infty}S_n(\psi_i)^2=0$ holds for some basis $(\psi_i)_{i\geq1}$, we will prove that $\Psi_n\to (c^\star)^2$ a.s.

Start by noting that the limit functional $c:\mathcal H \to \R$ is linear since $S_n$ is linear, and by the hypothesis it is also bounded. Then, by the Riesz representation theorem, there exists $\xi \in \mathcal H$ such that $c(\omega) = \InerH{\xi}{\omega}$. Let $\phi_1 = \xi/\LH{\xi}$, and complete an orthonormal basis $\phi_1,\phi_2,\ldots$ of $\mathcal H$. Then note that 
\begin{align*}
c(\phi_1)=\LH{\xi}\qquad\text{and}\qquad c(\phi_i)=\LH{\xi}\InerH{\phi_1}{\phi_i}=0,\quad \text{for all }i\geq 2.    
\end{align*}

Now, since $S_n:\mathcal{H}\to\R$ is linear and bounded, there exists $\xi_n \in \mathcal H$ such that
\begin{align*}
    \sup_{\omega\in \mathcal{H}:\LH{\omega}=1}S_n(\omega)^2 &= \LH{\xi_n}^2 =  \sum_{i=1}^{\infty} \InerH{\xi_n}{ \phi_i}^2 = \sum_{i=1}^{\infty} S_n(\phi_i)^2.
\end{align*}
Note that $S_n(\phi_1)^2 \to c(\phi_1)^2$, and that $S_n(\phi_i)^2 \to c(\phi_i)^2=0$ a.s. for all $i\geq 2$.  Then,  we just need to show that $\lim_{n\to \infty}\sum_{i=2}^{\infty} S_n(\phi_i)^2 = 0$ a.s. The latter follows from a simple computation. Indeed, let $u$ be a positive integer and write $
    \sum_{i=2}^{\infty} S_n(\phi_i)^2 = \sum_{i=2}^{u} S_n(\phi_i)^2+ \sum_{i=u+1}^{\infty} S_n(\phi_i)^2$, so
\begin{align*}
    \limsup_{n\to \infty} \sum_{i=2}^{\infty} S_n(\phi_i)^2 &= \limsup_{n\to \infty}\sum_{i=2}^{u} S_n(\phi_i)^2+\limsup_{n\to \infty} \sum_{i=u+1}^{\infty} S_n(\phi_i)^2=0+\limsup_{n\to \infty} \sum_{i=u+1}^{\infty} S_n(\phi_i)^2.
\end{align*}
Then by taking limit when $u$ tends to infinity we get $\limsup_{n\to \infty} \sum_{i=2}^{\infty} S_n(\phi_i)^2 = 0$.

Now we prove the converse. Suppose that $\Psi_n\to (c^\star)^2$ a.s. Consider the orthonormal basis $(\phi_i)_{i\geq1}$ as above. Then, note that since almost surely $\lim_{n\to \infty} \sum_{i=1}^{\infty} S_n(\phi_i)^2 = (c^\star)^2$ and $\lim_{n\to \infty} S_n(\phi_1)^2 = (c^\star)^2$, it holds
$$\lim_{n\to \infty}\sum_{i=2}^{\infty} S_n(\phi_i)^2 \to 0 \text{ } a.s.$$
thus $\lim_{u \to \infty} \lim_{n\to \infty}\sum_{i=u+1}^{\infty} S_n(\phi_i)^2 = 0$ a.s.

The analogous result for convergence in probability follows by using the same arguments.
\end{proof}

\section{Application to Hypothesis Testing}\label{sec:examples}

In this section we show how the results of the previous section can be applied in the analysis of kernel-based tests. We begin by showing that it is possible to easily evaluate $\Psi_n$ as long as evaluating $S_n(\omega)$ is easy. For that, our next proposition gives a closed-form expression for the kernelised statistic $\Psi_n$ in terms of $S_n$. 

\begin{proposition}\label{prop:Vstatshape} Suppose that $S_n$ is a bounded linear statistic, then
\begin{align*}
    \Psi_n= \|S_n\|_{\mathcal H \to \R}^2 = S_n^1S_n^2K,
\end{align*}
where $S_n^iK$ with $i\in\{1,2\}$ denotes the application of the transformation $S_n:\mathcal{H}\to\R$ to the i-th coordinate of the function $K$. 
\end{proposition}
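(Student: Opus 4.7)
The plan is to apply the Riesz representation theorem to the bounded linear functional $S_n$ and then identify its representer by using the reproducing property of the kernel $K$. This reduces the computation of $\|S_n\|^2_{\mathcal{H}\to\R}$ to two successive applications of $S_n$ to the kernel viewed as a function of each coordinate.

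First I would invoke Riesz representation: since $S_n \in \mathcal H^\star$ is bounded and linear, there exists a unique $\xi_n \in \mathcal H$ such that $S_n(\omega) = \InerH{\xi_n}{\omega}$ for every $\omega \in \mathcal H$, and moreover $\|S_n\|^2_{\mathcal{H}\to\R} = \LH{\xi_n}^2 = \InerH{\xi_n}{\xi_n}$. So the whole task becomes computing $\xi_n$ and then its squared norm.

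Next, I would use the reproducing property to pin down $\xi_n$ pointwise. For any $x \in \mathcal X$, the evaluation $\xi_n(x) = \InerH{\xi_n}{K_x} = S_n(K_x)$. Since $K_x(\cdot) = K(\cdot,x)$, this says $\xi_n(y)$ equals $S_n$ applied to the map $x \mapsto K(x,y)$, i.e., $\xi_n(y) = S_n^1 K(\cdot, y)$, where $S_n^1$ denotes applying $S_n$ to the first coordinate of $K$ (with $y$ treated as a parameter). Thus $\xi_n$ is the function in $\mathcal H$ obtained by ``$S_n$-ing out'' the first slot of $K$.

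Finally, to compute $\LH{\xi_n}^2$, I would apply $S_n$ to $\xi_n$ itself: $\LH{\xi_n}^2 = \InerH{\xi_n}{\xi_n} = S_n(\xi_n)$. Since $\xi_n(y) = S_n^1 K(\cdot, y)$, applying $S_n$ in the variable $y$ (the second coordinate of $K$) yields
\begin{align*}
\|S_n\|_{\mathcal H \to \R}^2 = S_n(\xi_n) = S_n^2\big(S_n^1 K(\cdot,\cdot)\big) = S_n^1 S_n^2 K,
\end{align*}
where the order of the two applications is immaterial because $S_n$ is linear and bounded, so each application commutes with the other when they act on different coordinates. The main subtle point to verify cleanly is this interchange, which essentially boils down to the fact that $\xi_n$ belongs to $\mathcal H$ (already guaranteed by Riesz) so that the outer $S_n$ is legitimately evaluated on an $\mathcal H$-element rather than on a formal pointwise expression. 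Aside from making that observation explicit, the argument is routine.
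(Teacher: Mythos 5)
Your proposal is correct and follows essentially the same route as the paper's proof: Riesz representation to obtain $\xi_n$ with $S_n(\omega)=\InerH{\xi_n}{\omega}$, the identity $\xi_n(x)=S_n(K_x)$ from the reproducing property, and then $\Psi_n=\LH{\xi_n}^2=S_n(\xi_n)=S_n^1S_n^2K$. The only difference is your added remark on the interchangeability of the two applications of $S_n$, which the paper leaves implicit.
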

\begin{proof} 
By the Riesz representation theorem, there exists a unique element $\xi_n\in\mathcal{H}$ such that $S_n(\omega)=\InerH{\xi_n}{\omega}$ for all $\omega\in\mathcal{H}$. Then
 \begin{align*}
    \Psi_n=\sup_{\unitball} S_n(\omega)^2=\sup_{\unitball} \InerH{\xi_n}{\omega}^2=\LH{\xi_n}^2=\InerH{\xi_n}{\xi_n}.
\end{align*}
Note that by the definition of $\xi_n$ we have $\InerH{\xi_n}{\xi_n}=S_n(\xi_n)$, and $\xi_n(x)=\InerH{\xi_n}{K_x}=S_n(K_x)$. Thus we conclude that
$$\Psi_n=S_n(\xi_n)=S_n(S_n(K_\cdot))=S_n^1S_n^2K.$$
\end{proof}

It is important to notice that for our analysis we do not actually need to write $\Psi_n$ in the closed-form expression of \cref{prop:Vstatshape}, but in practice this is important for the numerical evaluation of the test-statistic and henceforth the computer implementation of the test.

\subsection{MMD for the two-sample problem}\label{sec:MMD}
We first study the well-known Maximum-Mean-Discrepancy ($\MMD$) on RKHS's introduced in \cite{gretton2006kernel}.
The $\MMD$ is a test statistic which is  used to determine if two distributions, $F_0$ and $F_1$, are the same by means of $n= n_0+n_1$ independent random samples $(X_i)_{i=1}^{n_0}\sim F_0$ and  $(Y_i)_{i=1}^{n_1}\sim F_1$. The $\MMD$ is given by
\begin{align}
    \MMD\left(\widehat F_{0},\widehat F_{1}\right)&=\sup_{\omega\in\mathcal{H}:\LH{\omega}=1}\frac{1}{n_0}\sum_{i=1}^{n_0}\omega(X_i)-\frac{1}{n_1}\sum_{i=1}^{n_1}\omega(Y_i),\label{eqn:MMD}
\end{align}
where $\mathcal{H}$ is a RKHS with reproducing kernel given by $K$. In the above equation $\widehat F_0$ and $\widehat F_1$ represent the empirical distributions of $(X_i)_{i=1}^{n_0}$ and $(Y_i)_{i=1}^{n_1}$, respectively, so we avoid writing $\MMD((X_i)_{i=1}^{n_0},(Y_i)_{i=1}^{n_1})$. To design a testing procedure it is fundamental to study the distribution of the $\MMD$. In \cite{gretton2006kernel} and subsequent works \cite{gretton2007kernel,gretton2009Fast,gretton2012kernel,Chwialkowski2014Wild} the authors study the distribution of \cref{eqn:MMD} by using the following representation
\begin{align*}
 \MMD\left(\widehat F_{0},\widehat F_{1}\right)^2 = \frac{1}{n_0^2} \sum_{i=1}^{n_0}\sum_{j=1}^{n_0} K(X_i,X_j) - \frac{2}{n_0n_1}\sum_{i=1}^{n_0}\sum_{j=1}^{n_1} K(X_i,Y_j)+ \frac{1}{n_1^2} \sum_{i=1}^{n_1}\sum_{j=1}^{n_1} K(Y_i,Y_j),   
\end{align*}
which they obtain from the fact that the supremum is taken over the unit ball of an RKHS. From the previous equation, it can be deduced  that the square of the $\MMD$ is a two-sample $V$-statistic, and thus it can be studied under null and alternative hypothesis by using tools from the theory of $U$ and $V$ statistics. A drawback however is that the analysis of two-sample $V$-statistics can be lengthy, as it requires the computation of the first and second moment of it, i.e. terms involving up to 4 different $X_i'$s and $Y_j'$s.

The main asymptotic result in this setting is the following. Suppose that there are no vanishing groups, that is, $n_0/n\to \rho_0\in (0,1)$ as $n = n_0+n_1\to \infty$, and that the kernel satisfies some integrability conditions. Then, under the null hypothesis, $n\MMD(\widehat F_{0},\widehat F_{1})^2$ converges in distribution to a potentially infinite sum of independent weighted $\chi^2_1$ random variables when $n$ tends to infinity. Finding rejections regions for the $\MMD$ is not  simple, and various approaches had been proposed for such a task. In \cite{gretton2012kernel}, the authors proposed to fit Pearson curves using the first four moments of $\MMD(\widehat F_{0},\widehat F_{1})^2$ which is a computationally expensive procedure. In \cite{gretton2009Fast}, it is proposed to estimate the first $n$ weights of the infinite combination of independent $\chi^2_1$ random variables. A Wild-Bootstrap approach was proposed by \cite{Chwialkowski2014Wild}, and it is currently the standard approach used in kernel-based tests.

In the following subsections we will show that the results of \Cref{sec:convergence} can be used to obtain the results presented in  \cite{gretton2006kernel}, \cite{gretton2012kernel} and  \cite{Chwialkowski2014Wild} but using a much simpler and shorter analysis. 

\subsubsection{Analysis under the null hypothesis}
For our analysis we assume the same setting presented at the beginning of \Cref{sec:MMD}, and additionally, we assume the following condition holds true.
\begin{condition}\label{Condi: MMD example}
Assume that $K$ is bounded i.e. there is a constant $C>0$ such that for all $x,y\in \mathcal X$,  $|K(x,y)|\leq C$. Also, assume that there are no vanishing groups, i.e. $\lim_{n\to\infty} n_0/n=\rho_0$ and $\lim_{n\to\infty} n_1/n=\rho_1$, with $\rho_0,\rho_1\in (0,1)$.
\end{condition}
We remark that the condition over the kernel is for simplicity as it will ease some calculations. Indeed, note that by this condition the functions in the unit ball of $\mathcal H$ are uniformly bounded by $\sqrt{C}$ since $|\omega(x)|^2 \leq \LH{K_x}^2\LH{\omega}^2 = K(x,x)\leq C$.

Based on the data $(X_i)_{i=1}^{n_0}$ and $(Y_i)_{i=1}^{n_1}$, define
\begin{align*}
     \Psi_n= \sup_{\unitball}S_{n}(\omega)^2  \quad \text{ where }\quad S_{n}(\omega)=\sqrt{n}\left(\frac{1}{n_0}\sum_{i=1}^{n_0}\omega(X_i)-\frac{1}{n_1}\sum_{i=1}^{n_1}\omega(Y_i)\right).
\end{align*}
Clearly, $S_{n}$ is a linear statistic on $\omega$, and $\Psi_n = n\MMD(\widehat F_0, \widehat F_1)^2$. Moreover, by using that $|\omega(x)|^2\leq C$, we get that $|S_{n}(\omega)| \leq 2\sqrt{Cn}$, and thus we deduce that $S_{n}$ is a bounded linear test-statistic for each fixed $n_0$ and $n_1$. We will use \Cref{thm:ConvergenceSumChiSq} to obtain the asymptotic limit distribution of $\Psi_n$. The limit distribution of $\Psi_n$ will be characterised by the bilinear form $\sigma:\mathcal H \times \mathcal H \to \R$ given by
\begin{align}
\sigma(\omega,\widetilde{\omega})=\frac{1}{\rho_0\rho_1}\int\left(\omega(x)-\int\omega(y)dF_0(y)\right)\left(\widetilde{\omega}(x)-\int\widetilde{\omega}(y)dF_0(y)\right)dF_0(x).\label{eqn:sigma}    
\end{align}

\begin{lemma}\label{lemma:twosamplenormal}
Suppose that \Cref{Condi: MMD example} holds. Then, under the null hypothesis we have that for any $\omega_1,\ldots, \omega_{\ell} \in \mathcal H$,
\begin{align*}
\boldsymbol{S}_{n}=(S_{n}(\omega_1),S_{n}(\omega_2),\ldots,S_{n}(\omega_\ell))\overset{\mathcal D}{\to} N(0,\Sigma),
\end{align*}
as $n$ grows to infinity, where $\Sigma_{i,j} = \sigma(\omega_i,\omega_j)$ for any $i,j\in[\ell]$.

Moreover, for any $\omega\in \mathcal H$,
\begin{align*}
\E(S_{n}(\omega)) = 0\quad \text{ and}\quad \E(S_{n}(\omega)^2) = \frac{n^2}{n_0n_1}\int \left(\omega(x)-\int \omega(y)dF_0(y)\right)^2 dF_0(x).
\end{align*}
\end{lemma}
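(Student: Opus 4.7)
The plan is to split the statement into its two pieces and handle the variance/mean computation first, then reduce the multivariate weak convergence to a univariate statement via the Cramér–Wold device, exploiting the fact that $S_n$ is linear in its argument.

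First, for the moment computations, note that under the null $F_0 = F_1$, so $X_1,\ldots,X_{n_0},Y_1,\ldots,Y_{n_1}$ are all i.i.d.\ samples from $F_0$. Denote $\mu_\omega = \int \omega(y)\,dF_0(y)$ and $v_\omega^2 = \int (\omega(x)-\mu_\omega)^2\,dF_0(x)$; the boundedness of $K$ (hence of $\omega$) in \Cref{Condi: MMD example} guarantees $v_\omega^2<\infty$. Linearity of expectation yields $\E(S_n(\omega)) = \sqrt{n}(\mu_\omega - \mu_\omega) = 0$. For the variance, the independence of the two samples (and within each sample) gives
\begin{align*}
\E(S_n(\omega)^2) = n\Bigl(\tfrac{1}{n_0}v_\omega^2 + \tfrac{1}{n_1}v_\omega^2\Bigr) = \tfrac{n^2}{n_0 n_1}v_\omega^2,
\end{align*}
which is exactly the claimed formula.

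For the multivariate convergence, I would invoke the Cramér–Wold theorem: it suffices to show that for every $(a_1,\ldots,a_\ell)\in\R^\ell$ the scalar random variable $\sum_{k=1}^\ell a_k S_n(\omega_k)$ converges in distribution to a centered normal with variance $\sum_{k,j}a_k a_j \sigma(\omega_k,\omega_j)$. Since $S_n$ is linear on $\omega$, we can rewrite $\sum_k a_k S_n(\omega_k) = S_n(\omega^\star)$ with $\omega^\star = \sum_k a_k \omega_k \in \mathcal H$. Therefore the problem reduces to proving $S_n(\omega)\overset{\mathcal D}{\to}N(0,\sigma(\omega,\omega))$ for any fixed $\omega\in\mathcal H$, since then $\sigma(\omega^\star,\omega^\star) = \sum_{k,j}a_k a_j \sigma(\omega_k,\omega_j)$ by bilinearity.

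For the one-dimensional CLT, decompose
\begin{align*}
S_n(\omega) = \sqrt{\tfrac{n}{n_0}}\cdot \tfrac{1}{\sqrt{n_0}}\sum_{i=1}^{n_0}(\omega(X_i)-\mu_\omega) \;-\; \sqrt{\tfrac{n}{n_1}}\cdot \tfrac{1}{\sqrt{n_1}}\sum_{j=1}^{n_1}(\omega(Y_j)-\mu_\omega),
\end{align*}
where the cancellation of $\mu_\omega$ is legal because $F_0=F_1$ under the null. The two summands are independent (samples are independent), and each is a scaled empirical average of i.i.d.\ mean-zero bounded random variables, so the classical univariate CLT applies to each. Using \Cref{Condi: MMD example} ($n_0/n\to\rho_0$, $n_1/n\to\rho_1$) and Slutsky's theorem, the first and second summands converge in distribution to independent $N(0,v_\omega^2/\rho_0)$ and $N(0,v_\omega^2/\rho_1)$ variables respectively. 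Their difference is therefore $N\bigl(0,v_\omega^2(\tfrac{1}{\rho_0}+\tfrac{1}{\rho_1})\bigr) = N\bigl(0, v_\omega^2/(\rho_0\rho_1)\bigr) = N(0,\sigma(\omega,\omega))$, since $\rho_0+\rho_1=1$ and inspection of \cref{eqn:sigma} gives $\sigma(\omega,\omega) = v_\omega^2/(\rho_0\rho_1)$.

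There is no real obstacle here: the argument is routine once the linearity of $S_n$ is used to collapse the multivariate statement to a univariate CLT. The only subtlety worth a line or two is the bookkeeping of the rescaling $\sqrt{n}$ versus the ``natural'' scalings $\sqrt{n_0}$ and $\sqrt{n_1}$ inside the two partial sums, and checking that the resulting variance agrees with $\sigma(\omega,\omega)$ from \cref{eqn:sigma}; both are immediate given \Cref{Condi: MMD example}.
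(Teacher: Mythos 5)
Your proposal is correct and follows essentially the same route as the paper: the paper's proof invokes the CLT together with the observation that a linear combination of the $S_n(\omega_k)$ is again of the same form (i.e.\ $S_n(\omega^\star)$ by linearity), which is exactly the Cramér--Wold reduction you spell out, and the moment computation is the same "simple computation" using independence and $n=n_0+n_1$. Your write-up just makes explicit the variance bookkeeping ($1/\rho_0+1/\rho_1 = 1/(\rho_0\rho_1)$ since $\rho_0+\rho_1=1$) that the paper leaves to the reader.
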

\begin{proof}
By \Cref{Condi: MMD example}, $K$ is bounded. Then, by the Central Limit Theorem and the fact that a linear combination of independent normal random variables is normal, we get that  $\boldsymbol{S}_{n}\overset{\dist}{\to}N_\ell(0,\boldsymbol{\Sigma}),$
as $n\to\infty$, where $\boldsymbol{\Sigma}_{i,j}=\sigma(\omega_i,\omega_j)$ for any $i,j\in[\ell]$. The second part of the lemma is a simple computation, using the fact that the random variables are independent, and that, by \Cref{Condi: MMD example}, $\lim_{n\to\infty} n_0/n=\rho_0$ and $\lim_{n\to\infty} n_1/n=\rho_1$ where $\rho_0,\rho_1\in(0,1)$.
\end{proof}

\begin{lemma}\label{lemma:twosampleTraceClass} 
Let $(\phi_i)_{i\geq 1}$ be an orthonormal basis of $\mathcal H$, then for any probability measure $F$ we have
\begin{align*}
    \sum_{i=1}^\infty \int \phi_i(x)^2dF(x) = \int K(x,x)dF(x)<\infty
\end{align*}
\end{lemma}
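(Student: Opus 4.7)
The plan is to use Parseval's identity in the RKHS applied to the element $K_x$, and then integrate both sides with respect to $F$. First, I would note that since $(\phi_i)_{i\geq 1}$ is an orthonormal basis of $\mathcal H$ and $K_x\in \mathcal H$, the reproducing property gives $\InerH{K_x}{\phi_i}=\phi_i(x)$, so
\begin{align*}
    K_x = \sum_{i=1}^{\infty} \phi_i(x)\,\phi_i,
\end{align*}
with the sum converging in $\mathcal H$. Taking the squared norm and using $K(x,x)=\LH{K_x}^2$, Parseval yields the pointwise identity
\begin{align*}
    K(x,x) = \sum_{i=1}^{\infty} \phi_i(x)^2, \qquad \forall x\in \mathcal X.
\end{align*}

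Next I would integrate this identity against $F$. Since each summand $\phi_i(x)^2$ is non-negative and measurable, Tonelli's theorem (equivalently, monotone convergence applied to the increasing partial sums) allows us to interchange sum and integral, giving
\begin{align*}
    \sum_{i=1}^{\infty} \int \phi_i(x)^2 \, dF(x) = \int \sum_{i=1}^{\infty} \phi_i(x)^2\, dF(x) = \int K(x,x)\, dF(x).
\end{align*}

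Finally, the finiteness claim follows immediately from \Cref{Condi: MMD example}: the kernel is bounded by $C$, so $\int K(x,x)\, dF(x) \leq C<\infty$. There is essentially no main obstacle here; the only subtlety to flag is the justification of the interchange of sum and integral, which is clean because the terms are non-negative, and the mild verification of measurability of $x\mapsto \phi_i(x)^2$, which holds because each $\phi_i \in \mathcal H$ is a (measurable) function $\mathcal X \to \R$.
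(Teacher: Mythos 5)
Your proof is correct and follows essentially the same route as the paper's: Parseval's identity applied to $K_x$ gives $\sum_i \phi_i(x)^2 = \LH{K_x}^2 = K(x,x)$ pointwise, and then one integrates against $F$ and uses boundedness of the kernel for finiteness. You are in fact slightly more careful than the paper in explicitly justifying the interchange of sum and integral via Tonelli/monotone convergence, which the paper's one-line argument leaves implicit.
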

\begin{proof}
Since $F$ is a probability measure and the kernel is bounded, the result follows immediately since $$\sum_{i\geq 1}\int\phi_i(x)^2dF(x) = \sum_{i\geq 1} \InerH{\phi_i}{K_x}^2 = \LH{K_x}^2=K(x,x).$$
\end{proof}

\begin{proposition}
Suppose that  \Cref{Condi: MMD example} holds. Then, under the null hypothesis  $\Psi_n\overset{\dist}{\to}\Psi:=\sum_{i=1}^\infty\lambda_iZ_i^2$, where $Z_i$ are i.i.d. standard normal random variables, and $\lambda_i$ are the eigenvalues of the operator $T_\sigma$ associated with the covariance $\sigma$.
\end{proposition}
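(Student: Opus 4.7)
The plan is to directly apply \Cref{thm:ConvergenceSumChiSq}, so it suffices to verify \Cref{Cond:0bilinear,Cond:2Tail,Cond:1bound} for the sequence $S_n$ defined above, with bilinear form $\sigma$ given in \cref{eqn:sigma}. I will also need continuity of $\sigma$ to conclude that $T_\sigma$ is well-defined on $\mathcal H$; this follows from the bound $|\sigma(\omega,\widetilde\omega)| \leq \frac{C}{\rho_0\rho_1}\LH{\omega}\LH{\widetilde\omega}$, which is an immediate consequence of $|\omega(x)|\leq \sqrt{C}\LH{\omega}$.

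For \Cref{Cond:0bilinear}, this is essentially a restatement of \Cref{lemma:twosamplenormal}: applied to $\omega = \omega_1+\cdots+\omega_m$, the lemma tells us that $S_n(\omega_1+\cdots+\omega_m)$ is asymptotically normal with mean zero and variance $\sum_{i,j}\sigma(\omega_i,\omega_j)$, using the bilinear expansion of the variance formula given in the lemma (the cross terms $\sigma(\omega_i,\omega_j)$ have exactly the form of \cref{eqn:sigma} by polarisation of $\E[S_n(\omega)^2]$ and passing $n\to\infty$).

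For \Cref{Cond:2Tail}, I would pick any orthonormal basis $(\phi_i)_{i\geq1}$ of $\mathcal{H}$ and bound the variance by the second moment:
\begin{align*}
\sum_{i\geq1}\sigma(\phi_i,\phi_i) = \frac{1}{\rho_0\rho_1}\sum_{i\geq1}\mathrm{Var}_{F_0}(\phi_i) \leq \frac{1}{\rho_0\rho_1}\sum_{i\geq1}\int\phi_i(x)^2\,dF_0(x) = \frac{1}{\rho_0\rho_1}\int K(x,x)\,dF_0(x),
\end{align*}
where the last equality is \Cref{lemma:twosampleTraceClass}. This is finite since $K$ is bounded by $C$.

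For \Cref{Cond:1bound}, which is the main technical ingredient, I would use Markov's inequality together with the identity $\|S_n\circ P_{V_i^\perp}\|_{\mathcal H\to\R}^2 = \sum_{k>i} S_n(\phi_k)^2$ noted in the excerpt. Using the second-moment formula from \Cref{lemma:twosamplenormal} and monotone convergence to exchange $\E$ with the sum,
\begin{align*}
\E\!\left[\|S_n\circ P_{V_i^\perp}\|_{\mathcal H\to\R}^2\right] = \frac{n^2}{n_0 n_1}\sum_{k>i}\mathrm{Var}_{F_0}(\phi_k) \leq \frac{n^2}{n_0 n_1}\sum_{k>i}\int \phi_k(x)^2\,dF_0(x).
\end{align*}
Taking $\limsup_{n\to\infty}$ gives a factor $\frac{1}{\rho_0\rho_1}$ by \Cref{Condi: MMD example}, and then \Cref{lemma:twosampleTraceClass} together with $K$ bounded ensures $\sum_{k\geq1}\int\phi_k^2\,dF_0$ is a convergent series, so its tail vanishes as $i\to\infty$. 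Markov's inequality then yields \Cref{Cond:1bound}. The only mild subtlety is justifying the exchange of expectation and infinite sum, which is immediate because the summands are non-negative. With all three conditions verified, \Cref{thm:ConvergenceSumChiSq} gives $\Psi_n \overset{\mathcal D}{\to}\sum_{i\geq1}\lambda_i Z_i^2$, where $(\lambda_i)_{i\geq1}$ are the eigenvalues of $T_\sigma$.
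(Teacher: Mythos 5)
Your proposal is correct and follows essentially the same route as the paper: verify \Cref{Cond:0bilinear} via \Cref{lemma:twosamplenormal}, \Cref{Cond:2Tail} by bounding $\sigma(\phi_i,\phi_i)$ by the second moment and invoking \Cref{lemma:twosampleTraceClass}, and \Cref{Cond:1bound} via Markov's inequality applied to $\sum_{k>i}\E(S_n(\phi_k)^2)$, then apply \Cref{thm:ConvergenceSumChiSq}. Your added remarks on the continuity of $\sigma$, the polarisation identity behind the cross terms, and the Tonelli exchange are correct refinements of steps the paper leaves implicit.
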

\begin{proof}
We proceed to verify \Cref{Cond:0bilinear,Cond:1bound,Cond:2Tail} to apply \Cref{thm:ConvergenceSumChiSq}.  Since $F_0  = F_1$ we just write $F$.
\Cref{Cond:0bilinear} follows immediately from \Cref{lemma:twosamplenormal}. For \Cref{Cond:2Tail}, let $(\phi_k)_{k\geq 1}$ be an orthonormal basis of $\mathcal{H}$, then by the definition of $\sigma$ and \Cref{lemma:twosampleTraceClass}, we have
\begin{align*}
    \sum_{i=1}^\infty\sigma(\phi_i,\phi_i)\leq  \sum_{i=1}^\infty\int \phi_i(x)^2 dF(x)=\int K(x,x)dF(x)<\infty.
\end{align*}
Finally, let's verify \Cref{Cond:1bound}. Let $V_i$ be the span by $\phi_1,\ldots,\phi_i$. Note that $\|S_{n} \circ P_{V_i^\perp}\|_{\mathcal{H}\to\R}^2=\sum_{k=i+1}^\infty S_{n}(\phi_k)^2$, thus the Markov inequality and \Cref{lemma:twosamplenormal}, yields that for any $\varepsilon> 0$:
\begin{align*}
    \Prob\left( \|S_{n}\circ P_{V_i^\perp}\|_{\mathcal{H}\to\R}^2\geq \varepsilon\right)\leq \sum_{k=i+1}^\infty \varepsilon^{-1}\E\left(S_{n}(\phi_k)^2\right)
    &\leq  \frac{n^2}{\varepsilon n_0n_1}\sum_{k=i+1}^\infty \int \phi_k(x)^2 dF(x).
\end{align*}
By \Cref{lemma:twosampleTraceClass} we have $\lim_{i\to \infty}\sum_{k=i+1}^\infty \int \phi_k(x)^2 dF(x)= 0$, concluding that 
\begin{align*}
   \lim_{i\to \infty} \limsup_{n\to\infty} \Prob\left( \|S_{n}\circ  P_{V_i^\perp}\|_{\mathcal{H}\to\R}^2\geq \varepsilon\right)= 0. 
\end{align*}
\end{proof}

\subsubsection{Analysis under the alternative hypothesis}\label{sec:alternativetwosample}

\begin{proposition}
Assume that \cref{Condi: MMD example} holds. Then, under the alternative hypothesis, it exists $c^\star\in \R$ such that $\frac{1}n\Psi_n\to (c^\star)^2$ a.s.
\end{proposition}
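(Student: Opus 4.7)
The plan is to apply \Cref{Thm:alternati} to the rescaled statistics $\widetilde S_n:=S_n/\sqrt n$, so that $\tfrac{1}{n}\Psi_n=\sup_{\unitball}\widetilde S_n(\omega)^2$. The natural candidate limit functional is $c(\omega):=\int\omega\,dF_0-\int\omega\,dF_1$, which is linear and, by the uniform bound $|\omega(x)|\le\sqrt C\,\LH{\omega}$ coming from \Cref{Condi: MMD example}, satisfies $|c(\omega)|\le 2\sqrt C\,\LH{\omega}$. The Riesz representation theorem then gives $c(\omega)=\InerH{\xi}{\omega}$ with $\xi=\mu_0-\mu_1$, where $\mu_j:=\int K_x\,dF_j(x)\in\mathcal H$ is the mean embedding of $F_j$; in particular $c^\star=\LH{\xi}<\infty$. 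Pointwise almost sure convergence $\widetilde S_n(\omega)\to c(\omega)$ for each fixed $\omega\in\mathcal H$ follows at once from the real-valued strong law of large numbers applied to the bounded i.i.d.\ sequences $(\omega(X_i))$ and $(\omega(Y_j))$, together with $n_0/n\to\rho_0$ and $n_1/n\to\rho_1$.

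The substantive step is to verify the tail condition of \Cref{Thm:alternati}: that $\lim_{u\to\infty}\limsup_{n\to\infty}\sum_{i=u+1}^{\infty}\widetilde S_n(\phi_i)^2=0$ a.s.\ for some orthonormal basis $(\phi_i)_{i\ge 1}$. For this I introduce the Hilbert-space representer $\widetilde\xi_n:=\tfrac{1}{n_0}\sum_{j=1}^{n_0}K_{X_j}-\tfrac{1}{n_1}\sum_{j=1}^{n_1}K_{Y_j}\in\mathcal H$, for which $\widetilde S_n(\omega)=\InerH{\widetilde\xi_n}{\omega}$; Parseval then identifies $\sum_{i=u+1}^{\infty}\widetilde S_n(\phi_i)^2$ with $\LH{P_{V_u^\perp}\widetilde\xi_n}^2$, where $V_u=\mathrm{span}(\phi_1,\ldots,\phi_u)$. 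Applying the strong law of large numbers for i.i.d.\ Hilbert-space valued random variables (applicable because $\E\LH{K_X}^2=\E K(X,X)\le C<\infty$) to each of the two sample means yields $\widetilde\xi_n\to\xi$ almost surely in $\mathcal H$. Continuity of $P_{V_u^\perp}$ then gives $\LH{P_{V_u^\perp}\widetilde\xi_n}^2\to\LH{P_{V_u^\perp}\xi}^2=\sum_{i=u+1}^{\infty}\InerH{\xi}{\phi_i}^2$ a.s., and this tail in turn vanishes as $u\to\infty$ by a second application of Parseval. With the three ingredients established, \Cref{Thm:alternati} delivers $\tfrac{1}{n}\Psi_n\to(c^\star)^2$ a.s.

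The main obstacle is precisely this Hilbert-space strong law. A second-moment/Markov argument analogous to the one used under the null (bounding $\E\widetilde S_n(\phi_i)^2$ by $c(\phi_i)^2$ plus variance terms controlled via \Cref{lemma:twosampleTraceClass}) only gives $\limsup_n\Prob(\sum_{i>u}\widetilde S_n(\phi_i)^2\ge\varepsilon)\to 0$ as $u\to\infty$, which combined with the in-probability half of \Cref{Thm:alternati} would only yield $\tfrac{1}{n}\Psi_n\overset{\Prob}{\to}(c^\star)^2$; upgrading to almost-sure convergence genuinely requires Mourier's SLLN in Hilbert space.
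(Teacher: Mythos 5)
Your proof is correct, and it takes a genuinely different route from the paper's --- one that is in fact more careful. The paper establishes the same pointwise limit $c(\omega)$ via the scalar law of large numbers, but then verifies the tail condition of \Cref{Thm:alternati} only \emph{in expectation}: it bounds $\sum_{k>i} n^{-1}\E(S_n(\phi_k)^2)$ by $\sum_{k>i}\int\phi_k^2\,(dF_0+dF_1)$ and lets $i\to\infty$. Via Markov this yields the in-probability form of the tail condition, so strictly speaking the paper's argument delivers only $\tfrac1n\Psi_n\overset{\Prob}{\to}(c^\star)^2$, even though the stated conclusion is almost sure --- exactly the gap you flag in your closing paragraph. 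Your substitute, applying Mourier's strong law in $\mathcal H$ to the representer $\widetilde\xi_n=\tfrac{1}{n_0}\sum_j K_{X_j}-\tfrac{1}{n_1}\sum_j K_{Y_j}$ (valid since $\E\LH{K_X}\le\sqrt C<\infty$) and then using Parseval to identify $\sum_{i>u}\widetilde S_n(\phi_i)^2$ with $\LH{P_{V_u^\perp}\widetilde\xi_n}^2$, genuinely upgrades the tail condition to the almost-sure form and so legitimately invokes the a.s.\ half of \Cref{Thm:alternati}. The trade-off: the paper's expectation bound is elementary and uniform over bases, but buys only convergence in probability; your Hilbert-space SLLN costs one external theorem but buys the stated a.s.\ conclusion. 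One further remark: once you have $\widetilde\xi_n\to\xi$ a.s.\ in $\mathcal H$, you could bypass \Cref{Thm:alternati} entirely, since $\tfrac1n\Psi_n=\LH{\widetilde\xi_n}^2\to\LH{\xi}^2=(c^\star)^2$ a.s.\ follows directly from continuity of the norm.
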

\begin{proof}
We shall verify the conditions of \Cref{Thm:alternati} to prove almost sure convergence. By the law of large numbers we get
\begin{align*}
\frac{1}{\sqrt{n}}S_{n}(\omega) =  \frac{1}{n_0}\sum_{i=1}^{n_0}\omega(X_i)-\frac{1}{n_1}\sum_{i=1}^{n_1}\omega(Y_i) \overset{a.s}{\to} c(\omega):= \int \omega(x)dF_0(x)-\int \omega(x)dF_1(x).    
\end{align*}
Let $c^\star = \sup_{\unitball}c(\omega)$, and note that $c^\star <\infty$ since the functions $\omega$ in the unit ball of $\mathcal H$ are bounded as the kernel is bounded.

Now, let $(\phi_k)_{k\geq 1}$ be an orthonormal basis of $\mathcal H$, then
$$n^{-1}\E\left(S_n(\phi_k)^2\right)\leq 2\E\left(\left(\frac{1}{n_0}\sum_{j=1}^{n_0} \phi_k(X_j)\right)^2\right)  +2\E\left(\left(\frac{1}{n_1}\sum_{j=1}^{n_1} \phi_k(Y_j)\right)^2\right) \leq 2\E(\phi_k(X_1)^2) + 2\E(\phi_k(Y_1)^2)$$
Therefore,
\begin{align*}
    \sum_{k=i+1}^{\infty}n^{-1}\E(S_n(\phi_k)^2) \leq   \sum_{k=i+1}^{\infty} \int \phi_k(x)^2(dF_0+dF_1)(x),
\end{align*}
and the later tends to 0 when $i$ grows to infinity due to \Cref{lemma:twosampleTraceClass}. With the conditions of \Cref{Thm:alternati} proven we conclude that $\frac{1}{n}\Psi_n \to (c^\star)^2$ a.s.
\end{proof}

We remark that in order to ensure that $c^\star\neq 0$, a sufficient condition is that the RKHS $\mathcal H$ has the property of being $c_0$-universal \cite{sriperumbudur2011universality}: let $\mathcal X$ be a locally compact Hausdorff space (e.g. $\R^d$), then a RKHS $\mathcal H$ with reproducing kernel  $K:\mathcal X \times \mathcal X \to \R$, such that $K$ is bounded and $K_x \in C_0(\mathcal X)$ (i.e. $K_x$ is a continuous function of $\mathcal X$ vanishing at infinity); we say that $\mathcal H$ (or rather the corresponding kernel $K$) is $c_0$-universal if and only if  every bounded signed-measure different from the zero measure satisfies $\sup_{\unitball}\int \omega(x)\mu(dx)> 0$. Now, the measure $\mu = F_0-F_1$ is clearly different from 0 under the alternative hypothesis, so $c^\star = \sup_{\unitball}\int \omega(x) dF_0(x)-dF_1(x) >0$ for a $c_0$-universal RKHS $\mathcal H$.

While the definition of $c_0$-universality is rather technical, most of the usual kernels/RKHS
such as the squared exponential kernel, the Laplacian kernel, and the rational quadratic kernel are $c_0$-universal \cite{sriperumbudur2011universality}.

\subsubsection{Wild bootstrap resampling scheme}\label{sec:twoSampleWB}
To build a proper test we need to find rejection regions. In particular, given a predetermined Type-I error $\alpha\in (0,1)$, it is enough to find the $(1-\alpha)$-quantile of $\Psi_n$ under the null and reject the null hypothesis whenever our test-statistic exceeds this quantile. Note however that the asymptotic null distribution of $\Psi_n$ is very complex and in most cases unknown. Hence, we use a Wild Bootstrap re-sampling scheme to approximate such distribution in order to approximate the desired quantile. For that define the Wild Bootstrap version of the linear test-statistic $S_{n}$ given by 
\begin{align*}
    S_{n}^W(\omega)=\sqrt{n}\left(\frac{1}{n_0}\sum_{i=1}^{n_0}W^X_i\omega(X_i)-\frac{1}{n_1}\sum_{i=1}^{n_1}W^Y_i\omega(Y_i)\right),
\end{align*} 
where $W_i^X=U_i-\frac{1}{n_0}\sum_{j=1}^{n_0} U_j$, $W_i^Y=V_i-\frac{1}{n_1}\sum_{j=1}^{n_1} V_j$, and $(U_i)_{i=1}^{n_0}$ and $(V_i)_{i=1}^{n_1}$ are two data sets of i.i.d. random variables with mean 0 and variance 1, that are sampled independently of the data. Then, we define $\Psi_n^W$ as the kernelised version of $S^W_{n}$, that is, $$\Psi_n^W=\sup_{\omega\in\mathcal{H}:\LH{\omega}^2=1}\left(S_{n}^W(\omega)\right)^2.$$
We will use our results, in particular \Cref{thm:ConvergenceSumChiSq}, to study the asymptotic behaviour of $\Psi_n^W$ under both, the null and alternative hypothesis. We recall that since we are bootstrapping, we assume that all the data points are fixed and that all the randomness is due to the random weights $W_i^X$ and $W_i^Y$, so to avoid a cumbersome notation we write $\Prob_D, \E_D$, etc., when conditioning on the data points (so we avoid expressing the conditional probabilities with large expressions). Additionally, we need a notion of convergence in distribution given the data. Given a sequence of random variables $Q_n$ taking values in a metric space, and another random variable $Q$ (taking values in the same metric space), then we say that $Q_n$ converges in distribution to $Q$ given the data points $D$ (denoted $Q_n\overset{\mathcal D_D}{\to} Q_n$) if and only if
\begin{align}\label{eqn:defDD}
 \Big|\E_D(f(Q_n))-\E_D(f(Q))\Big| \overset{\Prob}{\to} 0, \text{ for any bounded, uniformly continuous real-valued $f$.}    
\end{align}
As usual the limits are taken when $n$, the number of data points, tends to infinity. In all our examples the limit $Q$ is independent of $D$ and hence we have $\E_D(f(Q)) = \E(f(Q))$. Observe the only difference with the standard definition of convergence in distribution is that we are taking conditional expectation (which are random variables), so we take the limits in distribution.

The limiting behaviour of $\Psi_n^W$ is characterised by the bilinear form $\sigma^W:\mathcal H \times \mathcal H \to \R$ defined as
\begin{align}
    \sigma^W(\omega,\widetilde{\omega})&=\frac{1}{\rho_0}\int\left(\omega(x)-\int\omega(y)dF_0(y)\right)\left(\widetilde{\omega}(x)-\int\widetilde{\omega}(y)dF_0(y)\right)dF_0(x)\nonumber\\
    &\qquad+\frac{1}{\rho_1}\int\left(\omega(x)-\int\omega(y)dF_1(y)\right)\left(\widetilde{\omega}(x)-\int\widetilde{\omega}(y)dF_1(y)\right)dF_1(x).\label{eqn:bilinear}
\end{align}

\begin{lemma}\label{LemmaG0:TSW}
Suppose that \Cref{Condi: MMD example} holds. Then for any $\omega_1,\ldots, \omega_{\ell}\in \mathcal H$, it holds that 
\begin{align}
    \left(S^W_{n}(\omega_1), \ldots, S^W_{n}(\omega_{\ell})\right)\overset{\mathcal D_D}{\to} N(0,\boldsymbol{\Sigma}^W),\label{eqn:convergenceWBSn}
\end{align}
where for any $i,j\in[\ell]$, $\boldsymbol{\Sigma}_{ij}^W = \sigma^W(\omega_i,\omega_j)$.

Moreover, for any $\omega,\widetilde{\omega}\in\mathcal{H}$
\begin{align}
    \E_D(S_{n}^W(\omega))& = 0, \quad \text{and} \quad
    \Cov_D(S_{n}^W(\omega), S_{n}^W(\widetilde{\omega})) =\frac{n}{n_0^2}\sum_{i=1}^{n_0}\alpha_i^X\widetilde{\alpha}_i^X+\frac{n}{n_1^2}\sum_{i=1}^{n_1}\beta_i^X\widetilde{\beta}_i^X.\label{eqnWTS}
\end{align}
where $\alpha^X_i=\omega(X_i)-\frac{1}{n_0}\sum_{j=1}^{n_0} \omega(X_j)$ and $\beta^Y_i=\omega(Y_i)-\frac{1}{n_1}\sum_{j=1}^{n_1} \omega(Y_j)$, and $\widetilde{\alpha}^X_i$ and $\widetilde{\beta}^Y_i$ are similarly defined by replacing $\omega$ by $\widetilde{\omega}$.
\end{lemma}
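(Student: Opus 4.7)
The plan is to reduce the claim to a standard conditional Lindeberg-Feller CLT for sums of independent random variables. The first step is to rewrite $S_n^W$ so the independent structure is visible: substituting $W_i^X = U_i - \bar U_{n_0}$ telescopes to $\sum_{i=1}^{n_0} W_i^X \omega(X_i) = \sum_{i=1}^{n_0} U_i \alpha_i^X$, and analogously for the $Y$-sum, so that
\begin{align*}
S_n^W(\omega)=\frac{\sqrt{n}}{n_0}\sum_{i=1}^{n_0} U_i \alpha_i^X - \frac{\sqrt{n}}{n_1}\sum_{i=1}^{n_1} V_i \beta_i^Y,
\end{align*}
which, conditionally on the data $D$, is a sum of $n_0+n_1$ independent mean-zero random variables. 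From here the mean identity $\E_D(S_n^W(\omega))=0$ and the covariance formula in \cref{eqnWTS} are immediate, since $\Var(U_i)=\Var(V_j)=1$ and the $U$'s are independent of the $V$'s.

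For the distributional convergence in \cref{eqn:convergenceWBSn}, I would invoke the Cram\'er-Wold device: the linearity of $\omega\mapsto S_n^W(\omega)$ gives $\sum_k a_k S_n^W(\omega_k) = S_n^W(\sum_k a_k \omega_k)$, reducing the multivariate statement to the one-dimensional conditional convergence $S_n^W(\omega)\overset{\mathcal{D}_D}{\to}N(0,\sigma^W(\omega,\omega))$ for every fixed $\omega\in\mathcal H$; the bilinearity of $\sigma^W$ then recovers the full covariance matrix $\boldsymbol{\Sigma}^W$. For the one-dimensional convergence I would apply the Lindeberg-Feller CLT to the triangular array $\xi_{n,i}=\frac{\sqrt{n}}{n_0}U_i\alpha_i^X$ for $i\leq n_0$ and $\xi_{n,i}=-\frac{\sqrt{n}}{n_1}V_{i-n_0}\beta_{i-n_0}^Y$ for $i>n_0$. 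Two things must be checked on an event of probability one in $D$: (a) the conditional variance converges to $\sigma^W(\omega,\omega)$, and (b) the conditional Lindeberg condition holds. For (a), the strong law of large numbers yields $n_0^{-1}\sum_i(\alpha_i^X)^2\overset{a.s.}{\to}\int(\omega(x)-\int\omega\,dF_0)^2 dF_0(x)$, and combining this (together with the analogous $Y$-limit) with $n_0/n\to\rho_0$, $n_1/n\to\rho_1$ from \Cref{Condi: MMD example} produces exactly $\sigma^W(\omega,\omega)$ as defined in \cref{eqn:bilinear}. For (b), the boundedness of $K$ together with a fixed $\omega\in\mathcal H$ gives $|\alpha_i^X|\leq 2\sqrt{C}\,\LH{\omega}$ and similarly for $\beta_i^Y$, so $|\xi_{n,i}|\leq C_\omega |U_i|/\sqrt{n}$ (or the analogue with $V_i$) for a deterministic constant $C_\omega$; the $L^2$-integrability of $U_1$ and $V_1$ then gives the Lindeberg bound uniformly in $D$.

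The main obstacle I foresee is bridging the unconditional Lindeberg-Feller theorem to the conditional mode of convergence $\overset{\mathcal{D}_D}{\to}$ specified in \cref{eqn:defDD}. The cleanest route is to note that the variance convergence and the Lindeberg bound above both hold on an event of probability one in $D$, which gives pointwise almost sure convergence of the conditional characteristic function $\E_D(e^{itS_n^W(\omega)})$ to the deterministic limit $e^{-t^2\sigma^W(\omega,\omega)/2}$. Applying L\'evy's continuity theorem pathwise in $D$ then yields convergence in distribution of the conditional law along a.e.\ realization of $D$, so that for any bounded uniformly continuous $f$, $\E_D(f(S_n^W(\omega)))\to \E(f(N(0,\sigma^W(\omega,\omega))))$ almost surely; bounded convergence upgrades this to convergence in probability, which is exactly what \cref{eqn:defDD} requires.
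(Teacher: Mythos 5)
Your proposal is correct and follows essentially the same route as the paper's proof: rewrite $S_n^W(\omega)=\sqrt{n}\bigl(\tfrac{1}{n_0}\sum_i \alpha_i^X U_i-\tfrac{1}{n_1}\sum_i\beta_i^Y V_i\bigr)$, observe that conditionally on the data this is a sum of independent mean-zero variables (giving \cref{eqnWTS} at once), and conclude via a triangular-array CLT together with the almost-sure convergence of the conditional variance to $\sigma^W$. You are in fact somewhat more careful than the paper, which invokes Lyapunov's CLT on the grounds that "all the random variables involved are bounded" even though $U_i,V_i$ are only assumed to have mean $0$ and variance $1$; your Lindeberg verification using only boundedness of $\alpha_i^X,\beta_i^Y$ and square-integrability of $U_1,V_1$, plus the explicit pathwise L\'evy-continuity bridge to the $\overset{\mathcal D_D}{\to}$ mode of convergence, closes gaps the paper leaves implicit.
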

\begin{proof}
By using the definition of the weights $(W_i^X)_{i=1}^{n_0}$ and $(W_i^Y)_{i=1}^{n_1}$, we can rewrite $S_{n}^W(\omega)$ as
\begin{align*}
    S_{n}^W(\omega)=\sqrt{n}\left(\frac{1}{n_0}\sum_{i=1}^{n_0}\alpha_i^X U_i-\frac{1}{n_1}\sum_{i=1}^{n_1}\beta^Y_iV_i\right).
\end{align*} 
Note that conditioned on the data, $S_{n}^W(\omega)$ is a linear combination of independent random variables. Thus, we can easily verify the results of \cref{eqnWTS}, and  that $\Var_D(S_n^W(\omega)) \to \sigma^W(\omega,\omega)$.  Now, since all the random variables involved in $S_{n}^W$ are bounded (recall that the reproducing kernel $K$ is bounded), the Lyapunov's central limit theorem holds,  proving that for any $\omega_1,\ldots, \omega_{\ell}\in \mathcal H$ we have $S_{n}^W(\omega)\overset{\mathcal D_D}{\to} N(0,\boldsymbol{\Sigma}^W)$.
\end{proof}

\begin{proposition}
Suppose that \Cref{Condi: MMD example} holds. Then $\Psi_n^{W}\overset{\mathcal D_D}{\to} \Psi^W:=\sum_{i=1}^{\infty} \lambda_i^W Z_i^2$, where $Z_i$ are i.i.d. standard normal random variables and $\lambda_i^W$ are the eigenvalues of the operator $T_{\sigma^W}$ associated with $\sigma^W$.
\end{proposition}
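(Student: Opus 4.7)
The plan is to mirror the analysis of $\Psi_n$ under the null but with $\E$ and $\Prob$ replaced by their conditional-on-the-data versions $\E_D$ and $\Prob_D$, verifying the analogues of Conditions $G_0$, $G_1$, $G_2$ for the functional $S_n^W$ and the bilinear form $\sigma^W$, and then appealing to a conditional variant of \Cref{thm:ConvergenceSumChiSq}. Condition $G_0$ in the form needed — finite-dimensional conditional convergence to a centered Gaussian with covariance $\sigma^W(\omega_i,\omega_j)$ — is exactly \Cref{LemmaG0:TSW}.

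For $G_1$, pick any orthonormal basis $(\phi_i)_{i\ge 1}$ of $\mathcal H$ and use that centering can only decrease the integrals appearing in \cref{eqn:bilinear} to obtain
\[
\sigma^W(\phi_i,\phi_i)\le \frac{1}{\rho_0}\int\phi_i(x)^2\,dF_0(x)+\frac{1}{\rho_1}\int\phi_i(x)^2\,dF_1(x);
\]
summing over $i$ and applying \Cref{lemma:twosampleTraceClass} to $F_0$ and $F_1$ yields a finite bound proportional to $\int K(x,x)\,d(F_0+F_1)(x)<\infty$. For the $\mathcal D_D$-analogue of $G_2$, apply Markov conditionally together with the conditional variance formula \cref{eqnWTS}, using $\sum_i(\phi_k(X_i)-\bar\phi_k)^2\le \sum_i \phi_k(X_i)^2$ to bound centred sums of squares by raw ones, giving
\[
\E_D(S_n^W(\phi_k)^2)\le \frac{n}{n_0^2}\sum_{i=1}^{n_0}\phi_k(X_i)^2+\frac{n}{n_1^2}\sum_{i=1}^{n_1}\phi_k(Y_i)^2.
\]
Taking unconditional expectation, summing over $k\ge u+1$, and invoking \Cref{lemma:twosampleTraceClass} together with the no-vanishing-groups assumption, the bound vanishes as $u\to\infty$ uniformly in $n$; a further Markov inequality then shows that the conditional tail probabilities $\Prob_D\left(\sum_{k>u}S_n^W(\phi_k)^2\ge\varepsilon\right)$ tend to $0$ in probability as $u\to\infty$, uniformly in $n$, which is the conditional tightness we need.

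The point that needs genuine care, and the main technical obstacle, is that \Cref{thm:ConvergenceSumChiSq} is stated for unconditional convergence in distribution while here we need the $\mathcal D_D$-version defined in \cref{eqn:defDD}. The cleanest route is the subsequence principle: along any subsequence of $n$, the in-probability convergences established above admit a further subsequence along which they hold almost surely in the data, and on the corresponding full-measure event the proof of \Cref{thm:ConvergenceSumChiSq} — i.e., the application of Billingsley's Theorem~3.2 via conditions (i)--(iii) of its proof — can be run pathwise, yielding weak convergence of the conditional laws $\mathcal L(S_n^W\mid D)$ to $\mathcal L(S^W)$. The continuous mapping theorem then delivers $\Psi_n^W=\|S_n^W\|_{\mathcal H\to\R}^2\overset{\mathcal D_D}{\to}\|S^W\|_{\mathcal H\to\R}^2=\sum_{i\ge 1}\lambda_i^W Z_i^2$, with $\lambda_i^W$ the eigenvalues of $T_{\sigma^W}$.
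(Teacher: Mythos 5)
Your proposal is correct and follows essentially the same route as the paper: verify \Cref{Cond:0bilinear,Cond:2Tail,Cond:1bound} for $S_n^W$ conditionally on the data (with \Cref{Cond:0bilinear} supplied by \Cref{LemmaG0:TSW}, and \Cref{Cond:2Tail,Cond:1bound} controlled through the centred-sum bound and \Cref{lemma:twosampleTraceClass}) and then invoke \Cref{thm:ConvergenceSumChiSq}. The only differences are cosmetic: for \Cref{Cond:1bound} the paper applies the law of large numbers directly to the data-dependent bound $\frac{n}{n_0^2}\sum_{j}\LH{P_{V_i^\perp}K_{X_j}}^2+\frac{n}{n_1^2}\sum_{j}\LH{P_{V_i^\perp}K_{Y_j}}^2$ to get an almost-sure limit, where you take a further unconditional expectation and a second Markov step, and the paper leaves the conditional ($\mathcal D_D$) reading of \Cref{thm:ConvergenceSumChiSq} implicit where you make it explicit via the subsequence principle.
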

\begin{proof}
We shall prove \Cref{Cond:0bilinear,Cond:2Tail,Cond:1bound} to invoke  \Cref{thm:ConvergenceSumChiSq}. \Cref{Cond:0bilinear} follows immediately from \Cref{LemmaG0:TSW}. \Cref{Cond:2Tail} holds  since for any orthonormal basis $(\phi_i)_{i\geq 1}$ of $\mathcal H$ we have
\begin{align*}
    \sum_{i=1}^\infty\sigma^W(\phi_i,\phi_i)&\leq \sum_{k=1}^\infty \left(\frac{1}{\rho_0}\int \phi_k(x)^2dF_0(x)+\frac{1}{\rho_1}\int \phi_k(x)^2dF_1(x)\right)<\infty
\end{align*}
where the last inequality is due to \Cref{lemma:twosampleTraceClass}. Finally, for \Cref{Cond:1bound} let $V_i$ be the span of $\phi_1,\ldots, \phi_i$, then for any $\varepsilon\geq0$, 
\begin{align*}
    &\Prob_{D}\left(\left\|S_{n}^W\circ P_{V_i^\perp}^2\right\|^2_{\mathcal{H}\to\R}\geq \varepsilon\right) \leq \frac{1}{\varepsilon}\sum_{k=i+1}^\infty\E_D\left(S_{n}^W(\phi_k)^2\right)\\
    &= \frac{1}{\varepsilon}\sum_{k=i+1}^\infty\left(\frac{n}{n_0^2}\sum_{j=1}^{n_0}\left(\phi_k(X_j)-\frac{1}{n_0}\sum_{\ell=1}^{n_0} \phi_k(X_\ell)\right)^2+\frac{n}{n_1^2}\sum_{j=1}^{n_1}\left(\phi_k(Y_j)-\frac{1}{n_1}\sum_{\ell=1}^{n_1} \phi_k(Y_\ell)\right)^2\right)\\
   & \leq \frac{1}{\varepsilon}\sum_{k=i+1}^\infty\left(\frac{n}{n_0^2}\sum_{j=1}^{n_0}\phi_k(X_j)^2+\frac{n}{n_1^2}\sum_{j=1}^{n_1}\phi_k(Y_j)^2\right)= \frac{1}{\varepsilon}\left(\frac{n}{n_0^2}\sum_{j=1}^{n_0} \LH{P_{V_i^{\perp}}K_{X_j}}^2 + \frac{n}{n_1^2}\sum_{j=1}^{n_1}\LH{P_{V_i^{\perp}}K_{Y_j}}^2 \right).
\end{align*}

Then, by taking limit when $n\to\infty$, the law of large numbers yields
\begin{align*}
    \limsup_{n\to\infty}\Prob_{D}\left(\left\|S_{n}^WP_{V_i^\perp}\right\|^2_{\mathcal{H}\to\R}\geq \varepsilon\right)&\leq \frac{1}{\varepsilon}\left(\frac{1}{\rho_0} \int \LH{P_{V_i^{\perp}}K_{x}}^2F_0(x)+\frac{1}{\rho_1}\int \LH{P_{V_i^{\perp}}K_{x}}^2dF_1(x)\right)
\end{align*}
Finally, the above goes to $0$ when $i$ tend to infinity by \Cref{lemma:twosampleTraceClass}.

\end{proof}

With the above results we can build a test which is asymptotically correct: given one data set of $n$ data-points we can resample from $\Psi_n^W$ as much as we want (since we can resample the wild bootstrap weights), so we can find rejection regions with arbitrary large precision. Under the null hypothesis, $\Psi_n$ and $\Psi_n^W$ have the same limit distribution (as it can be easily verified that $\sigma = \sigma^W$ under the null), hence our test will have the right level $\alpha$. Moreover, under the alternative $\Psi_n\to \infty$ (assuming that $c^\star>0$) whereas  $\Psi_n^W$ converges in distribution, i.e. $\Psi_n^W = O_p(1)$, therefore under the alternative hypothesis the power of the test tends to 1 as the number of data points grows to infinity. 

\subsection{Kernel log-rank test for the two-sample problem}
We analyse another kernel test, now in the setting of survival analysis data. Consider a collection of $n$ i.i.d. right-censored data points $(X_i,\Delta_{i},g_i)_{i=1}^{n}$. In this setting,  $g_i\in\{0,1\}$ denotes a group-label, $X_i \in [0,\infty)$ is a time defined by $X_i=\min\{\widetilde{X}_i,C_i\}$ where $\widetilde{X}_i\sim F_{g_i}$ is a time of interest and $C_i\sim G_{g_i}$ denotes a nuisance censoring time, and  $\Delta_i=\ind_{\{\widetilde{X}_i\leq C_i\}}$ is a censoring indicator. A common scenario in which this data arise is a clinical trial in which the object of interest is the survival time of patients with some disease. In this scenario, we assess two groups of these patients. The first group receives a treatment (group 1), and the second group receives a placebo (group 0). While for many patients is possible to observe the time of interest -typically death time- for some of these patients this time is not available usually because they leave the study or because the study ends. In the latter case, we just record the last time the patient was seen alive and call this observation right-censored. In this example $g_i$ denotes the groups of the patients, $X_i$ the time we observed the death time of the patient or the time she left the study, and $\Delta_i$ indicates whether we observed the death-time of the patient ($\Delta_i=1$), or the time the patient left the study ($\Delta_i=0$).

For this data, we will be interested on testing whether the distributions that generate the time of interest (death time of the patient, which is not always observed) for each group are the same, that is, our null hypothesis is $H_0:F_0=F_1$. The main challenge for this type of data is censoring, in particular it can happen that  the actual time of interest $\tilde X_i$ is independent of the group $g_i$, but the \emph{observed} time $X_i$ depends on the group $g_i$ (since $X_i$ is also a function of $C_{i}$), that is, we can have $F_0 = F_1$ but $G_0 \neq G_1$, and thus applying a two sample test to the data $X_i$ is not enough.

One of the pillars of survival analysis is the so-called weighted log-rank test, which is a well-known statistic for the two-sample problem in survival analysis. Here, we consider the kernel log-rank test-statistic $\Psi_n$ for the two-sample problem proposed in \cite{fernandez2021reproducing} defined as
\begin{align*}
\Psi_n=\sup_{\omega\in\mathcal{H}:\LH{\omega}=1}S_n(\omega)^2, \quad \text{ where}\quad    S_n(\omega)=\sqrt{\frac{n}{n_0n_1}}\int_0^{\infty}\omega(x)\frac{Y_0(x)Y_1(x)}{Y(x)}\left(\frac{dN_0(x)}{Y_0(x)}-\frac{dN_1(x)}{Y_1(x)}\right),
\end{align*}
here $S_n(\omega)$ is the weighted log-rank estimator. To understand $S_n$ we need to introduce some standard notation used in Survival Analysis. For $\ell \in \{0,1\}$,  $n_{\ell}$ is the sample size of group $\ell$, $N_{\ell}(x)= \sum_{i=1}^n \Delta_i \ind_{\{X_i\leq x,g_i=\ell\}}$ is a counting process (which counts the number of observed events in group $\ell$  up to time $x$), $Y_\ell(x)=\sum_{i=1}^{n}\ind_{\{X_{i}\geq x,g_i=\ell\}}$ counts the number of patients in group $\ell$ that are still in the study by time $x$, $Y(x)=Y_0(x)+Y_1(x)\leq n$ and $\tau_n=\max\{X_1,X_2,\ldots,X_n\}$. Standard computations show that  $\E(Y_0(x)) = n_0 (1-F_0(x))(1-G_0(x))$ and $\E(Y_1(x)) = n_1 (1-F_1(x))(1-G_1(x))$.

Notice that this example is interesting as in this setting $S_n(\omega)$ is not a sum i.i.d. of random variables, but something a bit more complex. Despite this, our results can still be applied to understand the asymptotic distribution of $\Psi_n$ under the null and alternative hypotheses, as all the ingredients needed are already well-known results in Survival Analysis. Additionally, we highlight that compared to the analysis presented \cite{fernandez2021reproducing}, our analysis is much simpler and shorter, and does not require the use of complicated tools such as multiple stochastic integrals, and martingale convergence theorems.

\subsubsection{Analysis under the null hypothesis}

In our analysis we will assume that \Cref{Condi: MMD example} holds. We begin by studying the behaviour of $\Psi_n$ under the null hypothesis (where $F_0 = F_1$). For that define the bilinear form  $\sigma:\mathcal H \times \mathcal H \to \R$ as:

\begin{align*}
\sigma(\omega,\omega') = \int_{0}^{\infty}\omega(x)\omega'(x)\frac{y_0(x)y_1(x)}{\rho_0y_0(x)+\rho_1 y_1(x)} \frac{dF_0(x)}{1-F_0(x)},    
\end{align*}
where $y_\ell(x) = (1-G_\ell(x))(1-F_\ell(x))$ for $\ell\in\{0,1\}$.

\begin{lemma}\label{lemma:kernelLogrank1}
Assume that \Cref{Condi: MMD example} holds. Then for any $\omega_1,\ldots, \omega_{\ell}\in \mathcal H$ we have that under the null distribution
$$(S_{n}(\omega_1),S_{n}(\omega_2),\ldots,S_{n}(\omega_\ell))\overset{\mathcal D}{\to} N(0,\Sigma),$$
where $\Sigma_{ij} = \sigma(\omega_i,\omega_j)$. 

Moreover, it exists a constant $C>0$ such that for every $\omega\in \mathcal H$ we have $\sigma(\omega,\omega)\leq C\int \omega(x)^2 dF_0(x)$ and such that for  all large enough $n$ we have
$$\E(S_{n}(\omega)^2)\leq C\int_0^{\infty} \omega(x)^2dF_0(x).$$
\end{lemma}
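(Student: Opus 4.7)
The plan is to use the Doob–Meyer decomposition of the counting processes $N_0$ and $N_1$ to rewrite $S_n(\omega)$ as a sum of stochastic integrals against martingales, and then apply Rebolledo's martingale central limit theorem. Under the null hypothesis $F_0=F_1=F$, let $\Lambda(t)=\int_0^t dF(u)/(1-F(u))$ be the common cumulative hazard, and define the counting-process martingales $M_\ell(t)=N_\ell(t)-\int_0^t Y_\ell(s)\,d\Lambda(s)$ for $\ell\in\{0,1\}$. A direct substitution of $dN_\ell = dM_\ell + Y_\ell\,d\Lambda$ gives
\begin{align*}
S_n(\omega)=\sqrt{\frac{n}{n_0n_1}}\int_0^\infty \omega(x)\left(\frac{Y_1(x)}{Y(x)}\,dM_0(x)-\frac{Y_0(x)}{Y(x)}\,dM_1(x)\right),
\end{align*}
expressing $S_n(\omega)$ as a sum of two orthogonal martingale integrals.

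For a single $\omega$, Rebolledo's theorem reduces the asymptotic normality to verifying (i) convergence in probability of the predictable variation $\langle S_n(\omega)\rangle$ to $\sigma(\omega,\omega)$, and (ii) a Lindeberg condition on the jumps. The predictable variation equals $\frac{n}{n_0n_1}\int \omega(x)^2\frac{Y_0(x)Y_1(x)}{Y(x)}\,d\Lambda(x)$; using the uniform convergence $Y_\ell(x)/n\to \rho_\ell y_\ell(x)$ (standard Glivenko–Cantelli-type result for at-risk processes), the integrand converges pointwise to $\omega(x)^2 \frac{y_0(x)y_1(x)}{\rho_0 y_0(x)+\rho_1 y_1(x)}\,\frac{dF_0(x)}{1-F_0(x)}$, which integrates to $\sigma(\omega,\omega)$. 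The Lindeberg condition is immediate: under \Cref{Condi: MMD example} the jumps of $S_n(\omega)$ are of order $n^{-1/2}\sup_x|\omega(x)| \leq n^{-1/2}\sqrt{C}\LH{\omega}$. The multivariate statement follows at once from the Cramér–Wold device together with the linearity of $S_n$: for any $a\in\R^\ell$, the linear combination $\sum_i a_i S_n(\omega_i)=S_n(\sum_i a_i\omega_i)$ converges to a centred normal with variance $\sigma(\sum_i a_i\omega_i,\sum_i a_i\omega_i)=\sum_{i,j}a_ia_j\sigma(\omega_i,\omega_j)$, giving the covariance matrix $\Sigma_{ij}=\sigma(\omega_i,\omega_j)$.

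The two bounds are elementary. For $\sigma(\omega,\omega)$, the factor $(1-F_0(x))$ inside $y_1(x)=(1-G_1(x))(1-F_0(x))$ cancels against $(1-F_0(x))^{-1}$ from $d\Lambda$, leaving
\begin{align*}
\sigma(\omega,\omega)=\int \omega(x)^2\,\frac{y_0(x)(1-G_1(x))}{\rho_0 y_0(x)+\rho_1 y_1(x)}\,dF_0(x) \leq \frac{1}{\rho_0}\int \omega(x)^2\,dF_0(x).
\end{align*}
For the second moment, the martingale isometry gives $\E(S_n(\omega)^2)=\E(\langle S_n(\omega)\rangle)$; then the inequality $Y_0Y_1/Y\leq Y_0$ combined with $\E(Y_0(x))\,d\Lambda(x)=n_0(1-G_0(x))\,dF_0(x)\leq n_0\,dF_0(x)$ yields $\E(S_n(\omega)^2)\leq (n/n_1)\int\omega(x)^2\,dF_0(x)$, which is at most $(2/\rho_1)\int\omega(x)^2\,dF_0(x)$ for all $n$ large enough.

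The main obstacle is the careful verification of the convergence of the predictable variation, since the ratio $Y_0Y_1/Y$ is only defined on $\{x:Y(x)>0\}$; a mild truncation (e.g.\ restricting to $[0,\tau]$ for $\tau$ where both $y_0(\tau)$ and $y_1(\tau)$ are positive, or multiplying by $\mathbf{1}_{\{Y(x)>0\}}$) together with dominated convergence handles this cleanly. The rest of the argument relies on already standard tools from survival analysis together with the kernel boundedness in \Cref{Condi: MMD example}, and the extension to the multivariate statement is almost free thanks to the linearity of $S_n$.
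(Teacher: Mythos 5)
Your proof is correct and follows essentially the same route as the paper: the paper disposes of the multivariate CLT by citing Lemma~1 of Brendel et al., whereas you reconstruct the underlying counting-process argument (Doob--Meyer decomposition, Rebolledo's theorem, Cram\'er--Wold), and your second-moment bound --- martingale isometry, $Y_0Y_1/Y\leq Y_0$, $\E(Y_0)=n_0(1-G_0)(1-F_0)$, and $n/n_1\to 1/\rho_1$ --- is exactly the computation given in the paper (which cites Gill's Lemma~4.1.2 for the isometry step). The elementary bound on $\sigma(\omega,\omega)$ by cancelling the factor $1-F_0$ is likewise the intended estimate, so the only difference is that your write-up is self-contained where the paper delegates to references.
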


The proof of the first part of the previous lemma appears in \cite[Lemma 1]{brendel2014weighted}. The second part follows by noticing that
\begin{align}
    \E(S_{n}(\omega)^2)&=\E\left(\frac{n}{n_0n_1}\int_0^{\infty}\omega(x)^2\frac{Y_0(x)Y_1(x)}{Y(x)}\frac{dF_0(x)}{1-F_0(x)}\right)\leq\frac{n}{n_0n_1}\int_0^{\infty}\omega(x)^2\E\left(Y_0(x)\right)\frac{dF_0(x)}{1-F_0(x)}\nonumber\\
    &=\frac{n}{n_1}\int_0^{\infty}\omega(x)^2(1-G_0(x))dF_0(x)\leq C\int_0^{\infty}\omega(x)^2dF_0(x), \label{eqn:Elogranksquare}
\end{align}
where the first equality is from \cite[Lemma 4.1.2 and its proof]{gill1980censoring}, then we use that $Y_0+Y_1 = Y$, that $\E(Y_0)=n_0(1-G_0)(1-F_0)$, and that $n/n_1$ converges to a constant, so it is bounded for large enough $n$.

\begin{proposition}
Assume that \Cref{Condi: MMD example} holds. Then, under the null hypothesis, $\Psi_n\overset{\mathcal D}{\to}\Psi:=\sum_{i=1}^{\infty}\lambda_i Z_i^2$, where $Z_i$ are i.i.d standard normal variables, and $\lambda_i$ are the eigenvalues of the operator $T_{\sigma}$ associated with $\sigma$.
\end{proposition}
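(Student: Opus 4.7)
The plan is to invoke \Cref{thm:ConvergenceSumChiSq} for the sequence of linear test-statistics $S_n$ coming from the kernelised log-rank functional, so the work reduces to checking \Cref{Cond:0bilinear,Cond:2Tail,Cond:1bound}. Boundedness of each $S_n\in\mathcal{H}^\star$ is immediate: by \Cref{Condi: MMD example} the kernel $K$ is bounded, so the reproducing property yields $|\omega(x)|^2\leq K(x,x)\LH{\omega}^2\leq C\LH{\omega}^2$ on the unit ball, while $S_n(\omega)$ is the integral of such a uniformly bounded $\omega$ against a data-driven signed measure whose total variation is controlled (the counting processes $N_\ell$ are bounded by $n$ and the integrand factor $Y_0Y_1/Y$ is bounded by $n$). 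Linearity in $\omega$ is obvious from the definition.

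\Cref{Cond:0bilinear} is exactly the content of the first half of \Cref{lemma:kernelLogrank1}, which provides the joint CLT for $(S_n(\omega_1),\ldots,S_n(\omega_\ell))$ with covariance $\Sigma_{ij}=\sigma(\omega_i,\omega_j)$; continuity and bilinearity of $\sigma$ are clear from its integral form, and its boundedness follows from the estimate $\sigma(\omega,\omega)\leq C\int \omega(x)^2\, dF_0(x)$ also recorded there. For \Cref{Cond:2Tail}, picking any orthonormal basis $(\phi_i)_{i\geq 1}$ of $\mathcal{H}$,
\begin{align*}
\sum_{i=1}^\infty \sigma(\phi_i,\phi_i) \;\leq\; C\sum_{i=1}^\infty \int \phi_i(x)^2\, dF_0(x) \;=\; C\int K(x,x)\, dF_0(x) \;<\; \infty,
\end{align*}
where the first inequality uses the quadratic bound from \Cref{lemma:kernelLogrank1}, the equality is \Cref{lemma:twosampleTraceClass}, and finiteness follows from the boundedness of $K$.

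For \Cref{Cond:1bound}, with $V_i=\mathrm{span}(\phi_1,\ldots,\phi_i)$ and using the identity $\|S_n\circ P_{V_i^\perp}\|_{\mathcal{H}\to\R}^2=\sum_{k=i+1}^\infty S_n(\phi_k)^2$, Markov's inequality together with the second-moment bound $\E(S_n(\phi_k)^2)\leq C\int \phi_k(x)^2\, dF_0(x)$, valid for all large $n$ (again \Cref{lemma:kernelLogrank1}), yields
\begin{align*}
\limsup_{n\to\infty}\Prob\left(\|S_n\circ P_{V_i^\perp}\|_{\mathcal{H}\to\R}^2\geq\varepsilon\right) \;\leq\; \frac{C}{\varepsilon}\sum_{k=i+1}^\infty \int \phi_k(x)^2\, dF_0(x),
\end{align*}
and the right-hand side vanishes as $i\to\infty$ by \Cref{lemma:twosampleTraceClass} (tail of a convergent series). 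Having checked the three conditions, \Cref{thm:ConvergenceSumChiSq} delivers $\Psi_n\overset{\mathcal D}{\to}\sum_{i\geq 1}\lambda_iZ_i^2$. The main obstacle is really bundled into \Cref{lemma:kernelLogrank1}: the multivariate CLT and the uniform second-moment control for the log-rank statistic rely on standard but nontrivial counting-process/martingale arguments (cf.\ \cite{brendel2014weighted,gill1980censoring}), but once those facts are in hand the argument proceeds in essentially the same template as the MMD analysis in \Cref{sec:MMD}, illustrating how \Cref{thm:ConvergenceSumChiSq} compresses what was a lengthy technical derivation in the original work into a few routine checks.
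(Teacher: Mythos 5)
Your proof is correct and follows essentially the same route as the paper: verify \Cref{Cond:0bilinear} via \Cref{lemma:kernelLogrank1}, \Cref{Cond:2Tail} via the quadratic bound $\sigma(\phi_i,\phi_i)\leq C\int\phi_i(x)^2\,dF_0(x)$ combined with \Cref{lemma:twosampleTraceClass}, and \Cref{Cond:1bound} via Markov's inequality and the same second-moment estimate, then apply \Cref{thm:ConvergenceSumChiSq}. The only addition is your explicit check that each $S_n$ is a bounded linear functional, which the paper leaves implicit but which is a harmless (and arguably welcome) extra sentence.
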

\begin{proof}
We proceed to verify the conditions of \Cref{thm:ConvergenceSumChiSq}.  \Cref{Cond:0bilinear} is established in \Cref{lemma:kernelLogrank1}. For \Cref{Cond:2Tail} consider an orthonormal basis $(\phi_i)_{i\geq 1}$ of $\mathcal H$, then \Cref{lemma:kernelLogrank1} yields that it exists a constant $C>0$ such that $\sigma(\phi_i,\phi_i)\leq C \int\phi_i(x)^2dF(x)$ for all $i\geq 1$, and thus the condition follows directly from \Cref{lemma:twosampleTraceClass}. Finally, for 
\Cref{Cond:1bound}, let $V_i$ be the span of $\phi_1,\ldots, \phi_i$. Then $\|S_n\circ P_{V_i^{\intercal}}\|_{\mathcal H \to \R}^2 = \sum_{k=i+1}^{\infty} S_n(\phi_k)^2$, and thus by the markov inequality and  \Cref{lemma:kernelLogrank1} we have
\begin{align*}
   \Prob(\|S_n\circ P_{V_i^{\intercal}}\|_{\mathcal H \to \R}^2 \geq {\varepsilon})\leq \frac{1}{\varepsilon} \sum_{k=i+1}^{\infty} \E(S_n(\phi_k)^2) \leq \frac{C}{\varepsilon} \sum_{k=i+1}^{\infty}\int \phi_k(x)^2dF_0(x),
\end{align*}
and the last term tends to 0 as $i$ grows by \Cref{lemma:twosampleTraceClass}.
\end{proof}

\subsubsection{Analysis under the alternative hypothesis}
\begin{proposition}
It exists $c^\star\geq 0$ such that under the alternative hypothesis we have $\frac{n}{n_0n_1}\Psi_n\to (c^\star)^2$ a.s.
\end{proposition}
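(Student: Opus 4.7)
The plan is to mirror the alternative-hypothesis argument for the two-sample MMD in \Cref{sec:alternativetwosample}: I will rescale the statistic so that its kernelisation is precisely $\frac{n}{n_0n_1}\Psi_n$, establish a pointwise almost-sure limit, and then verify the tail condition of \Cref{Thm:alternati}. Concretely, set $\tilde S_n(\omega):=\sqrt{n/(n_0n_1)}\,S_n(\omega)$, which is still a bounded linear functional on $\mathcal H$ and satisfies $\sup_{\LH{\omega}=1}\tilde S_n(\omega)^2 = \frac{n}{n_0n_1}\Psi_n$. Collecting the two $\sqrt{n/(n_0n_1)}$ factors gives
\[\tilde S_n(\omega)=\int_0^\infty\omega(x)\,K_n(x)\,\Bigl(\frac{dN_0}{Y_0}-\frac{dN_1}{Y_1}\Bigr)(x),\qquad K_n(x):=\frac{n}{n_0n_1}\frac{Y_0(x)Y_1(x)}{Y(x)}.\]

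The pointwise limit comes from classical uniform-consistency results in survival analysis. The Glivenko-Cantelli theorem gives $Y_\ell(\cdot)/n_\ell\to y_\ell$ uniformly a.s., hence $K_n\to K_\infty:=y_0y_1/(\rho_0 y_0+\rho_1 y_1)$ uniformly on compact subsets of the support of $F_0+F_1$; combined with the uniform strong law for the Nelson-Aalen estimator $\int_0^\cdot dN_\ell/Y_\ell\to \Lambda_\ell:=\int_0^\cdot dF_\ell/(1-F_\ell)$, one obtains $\tilde S_n(\omega)\overset{a.s.}{\to} c(\omega):=\int_0^\infty\omega(x)K_\infty(x)\,(d\Lambda_0-d\Lambda_1)(x)$ for each $\omega\in\mathcal H$. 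Boundedness of $K$ yields $|\omega(x)|\le\sqrt{C}\LH{\omega}$ and $\int K_\infty\,d\Lambda_\ell<\infty$, so $c$ is a bounded linear functional on $\mathcal H$ and $c^\star=\sup_{\LH{\omega}=1}c(\omega)\in[0,\infty)$ equals the $\mathcal H$-norm of its Riesz representative. Note $c\equiv 0$ under the null (where $\Lambda_0=\Lambda_1$), as expected.

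The hard part is verifying $\lim_{u\to\infty}\limsup_{n\to\infty}\sum_{i>u}\tilde S_n(\phi_i)^2=0$ a.s., the tail condition of \Cref{Thm:alternati}, to be checked in some orthonormal basis $(\phi_i)_{i\ge1}$ of $\mathcal H$ consisting of continuous functions (available since $K$ is continuous and bounded). My plan is to split $\tilde S_n=I_{n,0}-I_{n,1}$ with $I_{n,\ell}(\omega):=\int \omega K_n\,dN_\ell/Y_\ell$, introduce the finite positive random measures $d\mu_{n,\ell}:=K_n\,dN_\ell/Y_\ell$, and apply Cauchy-Schwarz to obtain $I_{n,\ell}(\omega)^2\le\mu_{n,\ell}(\mathcal X)\int\omega^2\,d\mu_{n,\ell}$. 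Using $\sum_{i\ge1}\phi_i(x)^2=K(x,x)\le C$ then gives
\[\sum_{i>u}\tilde S_n(\phi_i)^2\le 2\sum_{\ell\in\{0,1\}}\mu_{n,\ell}(\mathcal X)\int\Bigl(K(x,x)-\sum_{i\le u}\phi_i(x)^2\Bigr)d\mu_{n,\ell}(x).\]
The total masses $\mu_{n,\ell}(\mathcal X)=\int K_n\,dN_\ell/Y_\ell$ converge a.s.\ to $\int K_\infty\,d\Lambda_\ell<\infty$, and the uniform convergences above upgrade to weak convergence $\mu_{n,\ell}\to\mu_{\infty,\ell}:=K_\infty\,d\Lambda_\ell$ a.s. Since the integrand is bounded, continuous and decreases pointwise to $0$ in $u$, taking $n\to\infty$ first (weak convergence tested against the bounded continuous function $K(\cdot,\cdot)-\sum_{i\le u}\phi_i^2$) and then $u\to\infty$ (bounded convergence on the deterministic finite limit measure $\mu_{\infty,\ell}$) kills the right-hand side. \Cref{Thm:alternati} then delivers $\frac{n}{n_0n_1}\Psi_n\to(c^\star)^2$ a.s. The delicate point, demanding the most care, is precisely this double-limit interchange, because each $\mu_{n,\ell}$ is a random discrete measure while the test function depends on both $x$ and $u$; justifying the exchange rests on uniform-consistency bounds strong enough to ensure a.s.\ continuity of the $n$-limit in the test function.
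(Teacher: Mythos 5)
Your overall strategy matches the paper's: rescale by $\sqrt{n/(n_0n_1)}$, obtain the pointwise a.s.\ limit $c(\omega)$ from classical uniform-consistency results for $Y_\ell/n_\ell$ and the Nelson--Aalen integrals (the paper simply cites Section 7.3 of Fleming and Harrington for exactly the limit you derive), and then feed the tail condition into \Cref{Thm:alternati}. Where you genuinely diverge is in the verification of that tail condition. The paper reuses the moment computation of \cref{eqn:Elogranksquare} to get $\tfrac{n}{n_0n_1}\E(S_n(\phi_k)^2)\leq C\int\phi_k^2\,(dF_0+dF_1)$ and then invokes \Cref{lemma:twosampleTraceClass}; this is short, but as written it controls the tail sums in expectation, which most directly delivers the in-probability version of the condition rather than the a.s.\ one. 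You instead run a pathwise argument: Cauchy--Schwarz against the random measures $\mu_{n,\ell}=K_n\,dN_\ell/Y_\ell$, the identity $\sum_{i>u}\phi_i(x)^2=\LH{P_{V_u^\perp}K_x}^2\leq K(x,x)$, and then a.s.\ weak convergence of $\mu_{n,\ell}$ to the finite deterministic measure $K_\infty\,d\Lambda_\ell$ followed by dominated convergence in $u$. This buys you a genuinely almost-sure bound on $\limsup_n\sum_{i>u}\tilde S_n(\phi_i)^2$ at the price of having to justify the weak convergence of the random discrete measures on all of $[0,\infty)$ (including near the end of the support, where $Y_\ell$ degenerates) and the continuity of the eigenbasis; you correctly flag this double-limit interchange as the delicate step. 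Note that the paper's own wild-bootstrap proof in the same section uses essentially your device ($L(x)^2\leq\min\{Y_0^2,Y_1^2\}$ plus the law of large numbers applied to $\LH{P_{V_i^\perp}K_x}^2$), so your route is consistent with the paper's toolkit even though it is not the one used for this particular proposition.
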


\begin{proof}
We proceed to verify the conditions of \Cref{Thm:alternati} to prove almost sure convergence. It is shown in \cite[Section 7.3]{Flemming91Counting} that
$$\sqrt{\frac{n}{n_0n_1}}S_n(\omega) \overset{a.s.}{\to} c(\omega)= \int_0^{\infty}\omega(x) \frac{y_0(x)y_1(x)}{\rho_0y_0(x)+\rho_1y_1(x)}\left(\frac{dF_0(x)}{1-F_0(x)} -\frac{dF_1(x)}{1-F_1(x)}\right).$$

Following  similar steps as in \cref{eqn:Elogranksquare}, we get that $\sqrt{n/(n_0n_1)}\E(S_n(\omega)^2)\leq C\int \omega(x)^2( dF_0(x)+dF_1(x))$ holds for every $\omega \in \mathcal H$, where $C$ is a constant independent of $\omega$. Now, if we choose a basis $(\phi_i)_{i\geq 1}$ of $\mathcal H$, by \Cref{lemma:twosampleTraceClass} we have 
$$\lim_{u\to \infty}\limsup_{n\to \infty} \sum_{i=u+1}^{\infty} \frac{n}{n_0n_1}S_n(\phi_i)^2 \leq \lim_{u\to \infty} C\int \sum_{i=u+1}^{\infty}\phi_i(x)^2( dF_0(x)+dF_1(x)) = 0. $$

We conclude, by \Cref{Thm:alternati}, that $\frac{n}{n_0n_1}\Psi_n\overset{a.s.}{\to} (c^\star)^2 = \sup_{\unitball}c(\omega)^2.$
\end{proof}

Similar to the standard two-sample problem, we need to ensure that $c^\star>0$. For that we also require the RKHS $\mathcal H$ to be $c_0$-universal (recall its definition from \Cref{sec:alternativetwosample}), and also that the signed sigma-finite measure $\mu$ on $[0,\infty)$ given by
$$\mu(A) =  \frac{y_0(x)y_1(x)}{\rho_0y_0(x)+\rho_1y_1(x)}\left(\frac{dF_0(x)}{1-F_0(x)} -\frac{dF_1(x)}{1-F_1(x)}\right)$$
is different from 0. The last part requires some structural conditions on the censoring distributions $G_0$ and $G_1$ (i.e. the fact that $F_0\neq F_1$ is not enough). This issue is beyond the interest of this work, and we refer the reader to \cite[Section 4.2]{fernandez2021reproducing} for more details.

\subsubsection{Wild boostrap resampling scheme}

Consider $W_i$ as i.i.d. random variables with mean 0 and variance 1, and $\ell \in \{0,1\}$ define the weighted counting processes $N^W_{\ell}(x) = \sum_{i=1}^n W_i\Delta_i \ind_{\{X_i\leq x, g_i=\ell\}}$, and define $S_n^W$ as $S_n$ but replacing $N_{\ell}$ by $N^W_{\ell}$, i.e.
$$S_n^W(\omega)=\sqrt{\frac{n}{n_0n_1}}\int_0^{\tau_n}\omega(x)\frac{Y_0(x)Y_1(x)}{Y(x)}\left(\frac{dN^W_0(x)}{Y_0(x)}-\frac{dN^W_1(x)}{Y_1(x)}\right).$$

Denote $\E_D, \Prob_D$, etc probability conditioned on all data points $(X_i,\Delta_i,g_i)$ so the only source of randomness are the weights $W_i$, and recall the definition of $\overset{\mathcal D_D}{\to}$ from \cref{eqn:defDD}.

For our analysis, let's define
\begin{align*}
    \sigma^W(\omega,\omega') = \int \omega(x)\omega'(x)\frac{y_0(x)y_1(x)}{(\rho_0y_0(x)+\rho_1y_1(x))^2}\left(\rho_1y_1(x)\frac{dF_0(x)}{1-F_0(x)}+\rho_0 y_0(x)\frac{dF_1(x)}{1-F_1(x)}\right),
\end{align*}
and we note that under the null hypothesis $\sigma^W = \sigma$ (as defined in \cref{lemma:kernelLogrank1}).

The following lemma is shown in the proofs of Theorems 5 and 6 of \cite{Ditzhaus2019Wild}.

\begin{lemma}\label{lemma:kernelLogrank2} Suppose that \Cref{Condi: MMD example} holds, and consider the null or the alternative hypothesis. Then for any  $\omega_1,\ldots, \omega_{\ell}\in \mathcal H$  we have that 
$$(S^W_{n}(\omega_1),S^W_{n}(\omega_2),\ldots,S^W_{n}(\omega_\ell))\overset{\mathcal D_D}{\to} N(0,{\Sigma})$$
where $\Sigma_{ij} = \sigma^W(\omega_i,\omega_j)$. Moreover, we have that $\sigma^W(\omega,\omega) \leq \int_0^{\infty}\omega(x)^2 (dF_0(x)+dF_1(x))$.

Furthermore, for any $\omega \in \mathcal H$, it holds that $\E(S_n^W(\omega)) = 0$ and that
\begin{align}\label{eqn:wildVarLogrank}
    \E_D(S_n^W(\omega)^2) = \frac{n}{n_0n_1}\int_0^{\infty} \omega(x)^2\left(\frac{Y_0(x)Y_1(x)}{Y(x)}\right)^2\left(\frac{dN_0(x)}{Y_0(x)^2}+ \frac{dN_1(x)}{Y_1(x)^2}\right).
\end{align}
\end{lemma}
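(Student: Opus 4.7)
The key structural observation is that, conditionally on the data $D=(X_i,\Delta_i,g_i)_{i=1}^n$, $S_n^W(\omega)$ is a linear combination of the independent weights $W_i$. Substituting $N_\ell^W(x)=\sum_{i:\,g_i=\ell}W_i\Delta_i\ind_{\{X_i\leq x\}}$ and collapsing the Stieltjes integrals to sums over observed events yields
\[
S_n^W(\omega)=\sum_{i=1}^n W_i\,a_{n,i}(\omega),\quad a_{n,i}(\omega):=\sqrt{\tfrac{n}{n_0n_1}}\,\Delta_i\,\omega(X_i)\,\frac{\ind_{\{g_i=0\}}Y_1(X_i)-\ind_{\{g_i=1\}}Y_0(X_i)}{Y(X_i)}.
\]
With this in hand, $\E_D(S_n^W(\omega))=0$ and $\E_D(S_n^W(\omega)^2)=\sum_i a_{n,i}(\omega)^2$ are immediate. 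Noting that the bracket factor squared equals $\ind_{\{g_i=0\}}Y_1(X_i)^2+\ind_{\{g_i=1\}}Y_0(X_i)^2$ (the cross term vanishes since $g_i\in\{0,1\}$) and rewriting the resulting sum by group label as a Stieltjes integral against $dN_0$ and $dN_1$ produces \cref{eqn:wildVarLogrank}.

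\textbf{Joint CLT and Lindeberg reduction.} For the joint convergence I would invoke the Cram\'er--Wold device: by linearity of $\omega\mapsto S_n^W(\omega)$, any linear combination $\sum_k c_k S_n^W(\omega_k)$ equals $S_n^W(\bar\omega)$ with $\bar\omega=\sum_k c_k\omega_k$, and $\sigma^W(\bar\omega,\bar\omega)=\sum_{j,k}c_jc_k\sigma^W(\omega_j,\omega_k)$, so it suffices to prove the scalar statement $S_n^W(\omega)\overset{\mathcal D_D}{\to} N(0,\sigma^W(\omega,\omega))$ for each fixed $\omega\in\mathcal H$. Conditional on $D$, $S_n^W(\omega)$ is a sum of independent centred random variables, and I would apply the Lindeberg--Feller CLT. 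Because $\overset{\mathcal D_D}{\to}$ requires in-probability convergence over the data, I would argue along subsequences: from any subsequence extract a further subsequence on which the Glivenko--Cantelli theorem provides uniform a.s.\ convergence $Y_\ell/n_\ell\to y_\ell$ and $N_\ell(\cdot)/n_\ell\to\int_0^\cdot(1-G_\ell)\,dF_\ell$, and then run the CLT pointwise in the data.

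\textbf{Verifying the conditions, bound on $\sigma^W$, and obstacle.} Two checks are needed: (i) $\sum_i a_{n,i}(\omega)^2\to\sigma^W(\omega,\omega)$ a.s.\ on the subsequence, and (ii) $\max_i|a_{n,i}(\omega)|\to 0$, which combined with the bounded conditional variance yields Lindeberg negligibility via dominated convergence applied to $\E(W_1^2\ind_{\{|W_1|>\varepsilon/\max_i|a_{n,i}|\}})$. Condition (ii) follows because $|\omega(X_i)|\leq\sqrt C$ by \Cref{Condi: MMD example} and the bracket factor has absolute value at most $1$, so $|a_{n,i}(\omega)|=O(1/\sqrt n)$. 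For (i), the integrands in \cref{eqn:wildVarLogrank} are uniformly bounded by $\omega^2\leq C$ (since $Y_\ell/Y\leq 1$), so bounded convergence justifies passing the Glivenko--Cantelli limits through the integrals; the identity $(1-G_\ell)\,dF_\ell=y_\ell\,dF_\ell/(1-F_\ell)$ then rearranges the limit into exactly the form of $\sigma^W(\omega,\omega)$. For the claimed bound on $\sigma^W$, I would split its defining integral into its two summands and estimate each integrand pointwise by a constant multiple of $\omega(x)^2$, using $y_\ell/(1-F_\ell)=1-G_\ell\leq 1$ and $\rho_\ell y_\ell\leq \rho_0y_0+\rho_1y_1$. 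The main obstacle is the variance convergence in (i) near the random upper endpoint $\tau_n$, where the denominator $Y(x)^2$ can be small and the ratios $Y_1/Y$, $Y_0/Y$ become erratic; because $dN_\ell$ puts no mass in regions where $Y_\ell<1$, a truncation argument at a fixed cutoff $\tau<\infty$ (with tail control via monotone/dominated convergence in $\tau$) handles this cleanly.
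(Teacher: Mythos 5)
Your proposal is correct, but it necessarily takes a different route from the paper, because the paper does not prove this lemma at all: it is imported wholesale from the proofs of Theorems 5 and 6 of \cite{Ditzhaus2019Wild}. What you have written is in effect a self-contained reconstruction of that external argument. The representation $S_n^W(\omega)=\sum_i W_i a_{n,i}(\omega)$ with your coefficients is exactly right and immediately yields the conditional mean zero and \cref{eqn:wildVarLogrank} (the cross term in the square of the bracket vanishes as you say); the Cram\'er--Wold reduction is legitimate because $S_n^W$ and $\sigma^W$ are (bi)linear in $\omega$; and the conditional Lindeberg--Feller CLT applies since $\max_i|a_{n,i}(\omega)|=O(n^{-1/2})$ under \Cref{Condi: MMD example}. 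The one genuinely delicate point is the one you flag --- a.s.\ convergence of $\sum_i a_{n,i}(\omega)^2$ to $\sigma^W(\omega,\omega)$ near the right endpoint --- and your truncation at a fixed $\tau$ does work: using $(Y_0Y_1/Y)^2/Y_\ell^2\leq 1$ on the support of $dN_\ell$, the tail of \cref{eqn:wildVarLogrank} is at most $C\tfrac{n}{n_0n_1}\bigl(N_0(\infty)-N_0(\tau)+N_1(\infty)-N_1(\tau)\bigr)$, whose a.s.\ limit vanishes as $\tau\to\infty$. (Glivenko--Cantelli already gives a.s.\ convergence along the full sequence, so the subsequence extraction is not actually needed, and the conclusion in the sense of \cref{eqn:defDD} follows since a.s.\ convergence implies convergence in probability.) What the citation buys the paper is brevity; what your argument buys is a direct proof avoiding the counting-process/martingale machinery of \cite{Ditzhaus2019Wild} and making transparent that only boundedness of the kernel and non-vanishing group proportions are used. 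One small discrepancy: your pointwise estimates give $\sigma^W(\omega,\omega)\leq\max(\rho_0^{-1},\rho_1^{-1})\int_0^{\infty}\omega(x)^2(dF_0(x)+dF_1(x))$ rather than the constant-$1$ bound literally stated in the lemma; this is harmless (and arguably the more accurate statement), since the bound is only used to verify \Cref{Cond:2Tail}, where any finite constant suffices.
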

\begin{proposition}
Assume \Cref{Condi: MMD example} holds. Then, $\Psi_n^W \to \Psi^W:= \sum_{i=1}^{\infty} \lambda_i Z_i^2$, where $Z_i$ are i.i.d. standard normal random variables, and $\lambda_i$ are the eigenvalues of the operator $T_{\sigma^W}$ associated with $\sigma^W$.
\end{proposition}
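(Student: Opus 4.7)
The plan is to verify \Cref{Cond:0bilinear,Cond:2Tail,Cond:1bound} under the conditional probability $\Prob_D$ and then invoke \Cref{thm:ConvergenceSumChiSq} in the conditional-on-data setting, mirroring the wild bootstrap analysis carried out for the MMD in \Cref{sec:twoSampleWB}. \Cref{Cond:0bilinear} is delivered directly by the joint conditional Gaussian limit in \Cref{lemma:kernelLogrank2}. For \Cref{Cond:2Tail}, I fix any orthonormal basis $(\phi_k)_{k\geq 1}$ of $\mathcal H$ and combine the bound $\sigma^W(\omega,\omega)\leq \int \omega(x)^2(dF_0+dF_1)(x)$ stated in \Cref{lemma:kernelLogrank2} with \Cref{lemma:twosampleTraceClass} to get $\sum_{k\geq 1}\sigma^W(\phi_k,\phi_k)\leq \int K(x,x)(dF_0(x)+dF_1(x))<\infty$.

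The bulk of the work lies in \Cref{Cond:1bound}; I expect the obstacle there to be controlling the weighting factor $Y_0Y_1/Y$ inside the conditional second moment \cref{eqn:wildVarLogrank} so that it reduces to something tractable via the strong law of large numbers. Write $V_i = \mathrm{span}(\phi_1,\ldots,\phi_i)$. The Markov inequality under $\Prob_D$, the identity $\|S_n^W\circ P_{V_i^\perp}\|_{\mathcal H\to\R}^2 = \sum_{k\geq i+1}S_n^W(\phi_k)^2$, and the variance formula \cref{eqn:wildVarLogrank} together yield
\begin{align*}
\Prob_D\!\left(\|S_n^W\circ P_{V_i^\perp}\|_{\mathcal H\to\R}^2\geq \varepsilon\right)\leq \frac{1}{\varepsilon}\sum_{k=i+1}^\infty \frac{n}{n_0n_1}\int_0^\infty \phi_k(x)^2\left(\frac{Y_0(x)Y_1(x)}{Y(x)}\right)^{\!2}\left(\frac{dN_0(x)}{Y_0(x)^2}+\frac{dN_1(x)}{Y_1(x)^2}\right).
\end{align*}

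The relation $Y=Y_0+Y_1$ gives $(Y_0Y_1/Y)^2/Y_0^2 = (Y_1/Y)^2\leq 1$ and $(Y_0Y_1/Y)^2/Y_1^2 = (Y_0/Y)^2 \leq 1$, so the integrand is dominated by $\phi_k(x)^2(dN_0(x)+dN_1(x))$. Swapping sum and integral by monotone convergence (nonnegative integrands and positive measures) and using $\sum_{k\geq i+1}\phi_k(x)^2 = \|P_{V_i^\perp}K_x\|^2$, the upper bound becomes
\begin{align*}
\frac{1}{\varepsilon}\cdot\frac{n}{n_0n_1}\int_0^\infty \|P_{V_i^\perp}K_x\|^2(dN_0(x)+dN_1(x)).
\end{align*}

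To close the argument, I would let $n\to\infty$ followed by $i\to\infty$. The strong law of large numbers gives $\frac{1}{n_\ell}\int \|P_{V_i^\perp}K_x\|^2 dN_\ell(x)\overset{a.s.}{\to} \int \|P_{V_i^\perp}K_x\|^2(1-G_\ell(x))dF_\ell(x)\leq \int \|P_{V_i^\perp}K_x\|^2 dF_\ell(x)$ for $\ell\in\{0,1\}$, while $n/n_\ell\to 1/\rho_\ell$ by \Cref{Condi: MMD example}. The resulting deterministic upper bound for the limsup in $n$ depends only on $i$, and since $\|P_{V_i^\perp}K_x\|^2\downarrow 0$ and is dominated by the integrable function $K(x,x)\leq C$, dominated convergence sends it to $0$ as $i\to\infty$. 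This establishes \Cref{Cond:1bound}, and \Cref{thm:ConvergenceSumChiSq} applied conditionally on the data yields $\Psi_n^W\overset{\mathcal D_D}{\to}\sum_{i\geq 1}\lambda_i Z_i^2$, completing the proof.
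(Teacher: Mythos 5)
Your proposal is correct and follows essentially the same route as the paper: verify \Cref{Cond:0bilinear} via \Cref{lemma:kernelLogrank2}, \Cref{Cond:2Tail} via the bound $\sigma^W(\omega,\omega)\leq\int\omega(x)^2(dF_0+dF_1)$ together with \Cref{lemma:twosampleTraceClass}, and \Cref{Cond:1bound} via Markov's inequality applied to \cref{eqn:wildVarLogrank}, dominating the factor $(Y_0Y_1/Y)^2$ and passing to the limit by the law of large numbers. Your algebraic observation $(Y_0Y_1/Y)^2/Y_\ell^2=(Y_{1-\ell}/Y)^2\leq 1$ is just a slightly cleaner phrasing of the paper's bound $L(x)^2\leq\min\{Y_0(x)^2,Y_1(x)^2\}$, and the rest matches step for step.
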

\begin{proof}
We can now verify the conditions of \Cref{thm:ConvergenceSumChiSq} for $\Psi_n^W$. Here we do not need to assume the null or the alternative hypothesis. \Cref{Cond:0bilinear} follows immediately from \Cref{lemma:kernelLogrank2}. \Cref{Cond:2Tail} holds because $\sigma^W(\omega,\omega)\leq \int\omega(x)^2(dF_0(x)+dF_1(x))$ as stated in \cref{eqn:wildVarLogrank}, and thus for a basis $(\phi_i)_{i\geq 1}$ of $\mathcal H$ we have $\sum_i \sigma^W(\phi_i,\phi_i) \leq \sum_i \int\phi_i(x)^2(dF_0(x)+dF_1(x))
$
which converges by \Cref{lemma:twosampleTraceClass}.

Finally, to verify \Cref{Cond:1bound} let $V_i$ be the span of $\phi_1,\ldots, \phi_i$. Define $L(x)$ as $L(x)= Y_0(x)Y_1(x)/Y(x)$, then by the Markov inequality and \cref{eqn:wildVarLogrank}, we have
\begin{align*}
   \Prob(\|S_n P_D{V_i^{\intercal}}\|_{\mathcal H \to \R}^2)& \leq \sum_{k=i+1}^{\infty} \E_D(S_n^W(\phi_k)^2)= \frac{n}{n_0n_1} \int_0^{\infty} \sum_{k=i+1}^{\infty}\phi_k(x)^2 L(x)^2  \left(\frac{dN_0(x)}{Y_0(x)^2}+ \frac{dN_1(x)}{Y_1(x)^2}\right)\\
    &= \frac{n}{n_0n_1} \int_0^{\infty} \LH{P_{V_i^\perp}K_x}^2 L(x)^2  \left(\frac{dN_0(x)}{Y_0(x)^2}+ \frac{dN_1(x)}{Y_1(x)^2}\right).
\end{align*}
We upper bound the last quantity by using that $L(x)^2\leq \min\{Y_0(x)^2,Y_1(x)^2\}$ since $0\leq Y_0+Y_1\leq Y$,  and that $n_0/n$ and $n_1/n$ converge to $\rho_0> 0$ and $\rho_1>0$ respectively, so for large enough $n$ they are bounded above by some constant $C>0$. Then we obtain that
\begin{align*}
    &\frac{n}{n_0n_1} \int_0^{\infty} \LH{P_{V_i^\perp}K_x}^2 L(x)^2  \left(\frac{dN_0(x)}{Y_0(x)^2}+ \frac{dN_1(x)}{Y_1(x)^2}\right)\leq  C\int_0^{\infty} \LH{P_{V_i^\perp}K_x}^2  \left( \frac{dN_0(x)}{n_0}+ \frac{dN_1(x)}{n_1}\right),
\end{align*}
which by the law of the large numbers converges, almost surely, to 
$$\int_0^{\infty} \LH{P_{V_i^\perp}K_x}^2  \rho_0(1-G_0(x))dF_0(x)+\rho_1(1-G_1(x))dF_1(x) \leq \int_0^{\infty} \LH{P_{V_i^\perp}K_x}^2  (dF_0(x)+dF_1(x)). $$
We conclude that 
\begin{align}
  \limsup_{i\to \infty}\limsup_{n\to \infty}  \Prob(\|S_n P_{V_i^{\intercal}}\|_{\mathcal H \to \R}^2) \leq  \limsup_{i\to \infty}\int_0^{\infty} \LH{P_{V_i^\perp}K_x}^2  (dF_0(x)+dF_1(x)) = 0 \text{ }a.s.,
\end{align}
where the last equality is due to \Cref{lemma:twosampleTraceClass}, since $ \LH{P_{V_i^\perp}K_x}^2=\sum_{k=i+1}^\infty\phi_k(x)^2$, and the kernel is bounded.

\end{proof}

To conclude our analysis, since the covariance functions $\sigma$ and $\sigma^W$ coincide under the null hypothesis, we have that $\Psi_n^W$ and $\Psi_n$ have the same limiting distribution under the null. However, under the alternative we have that $\Psi_n\to \infty$ (assuming that $c^\star>0$) and that $\Psi_n^W = O_p(1)$. Then we obtain the same conclusions as in the previous example for the (standard) two-sample problem, in particular that an asymptotically correct testing procedure can be built with the aid of wild bootstrap.

As a final remark, some authors use $\omega(\widehat F(x))$ instead of $\omega(x)$ in the definition of $S_n(\omega)$ (the log-rank statistic) where $\widehat F(x)$ is the Kaplan-Meier estimator using all the data points. This transformation scales the data to $[0,1]$ so we can avoid subpar performance of the testing procedure due to the scale of the data points. Our results still apply (with minor modifications) in this setting.

\section{A new application to conditional independence testing}
We introduce a new test for testing conditional independence based on the kernelisation of the Generalised Covariance Measure recently introduced by \citet{shah2020hardness} and its weighted version studied in \cite{scheidegger2021weighted}.

\subsection{Kernelised  generalised covariance measure }
Consider data points $(X_i,Y_i,Z_i)_{i=1}^n\overset{i.i.d.}{\sim}P$, where $P$ is a probability measure on $\R\times\R\times\R^{d}$, with $d\geq 1$. We are interested on testing whether $X$ and $Y$ are conditionally independent given $Z$. We start by noting that the following decomposition always holds:
\begin{align*}
    X=f(Z)+\epsilon_X(Z),\qquad \text{and} \qquad Y=g(Z)+\epsilon_Y(Z),
\end{align*}
where $f(z)=\E(X|Z=z)$ and $g(z)=\E(Y|Z=z)$.  In order to test the null  hypothesis $H_0:X\perp Y|Z$ against the alternative $H_a:X\not \perp Y|Z$, the following parameter, called the Generalised Covariance Measure $(\GCM)$ was introduced in \cite{shah2020hardness}:
$$\GCM(X,Y;Z) = \E(\epsilon_X(Z)\epsilon_Y(Z)).$$
Under the null hypothesis $\GCM(X,Y;Z) = 0$, so it can be used as a parameter to test the null hypothesis. A weighted generalisation of the $\GCM$, denominated the weighted generalised covariance measure ($\wGCM$) was introduced in \cite{scheidegger2021weighted}. Given a weight function $\omega:\R^{d}\to \R$, the $\wGCM$  is defined as
\begin{align}
\wGCM(X,Y;Z) = \E(\omega(Z)\epsilon_X(Z)\epsilon_Y(Z)).\label{eqn:WGCM}
\end{align}
Again, under the null hypothesis we have that $\wGCM(X,Y;Z) = 0$ for any $\omega \in \mathcal H$. The motivation behind the weighted generalisation of the $\GCM$ is that under some alternatives we may have $\GCM(X,Y;Z) = 0$, but if we choose an appropriate weight we will get $\wGCM(X,Y;Z)\neq 0$, and so the weighted version should be more robust.

In order to use the $\GCM$ and the $\wGCM$ in practice, we need to estimate $\epsilon_X$ and $\epsilon_Y$ from the data. For that, we need to estimate the conditional expectation of $X$ and $Y$ given $Z$, which can be done by a regression estimator. Denote by $\hat f$ and $\hat g$ the regression estimators of $\E(X|Z)$ and $\E(Y|Z)$, respectively. Here $\hat f$ is estimated using $(X_i,Z_i)_{i=1}^n$ whereas $\hat g$ is estimated using $(Y_i,Z_i)_{i=1}^n$. Then define
\begin{align}
    \widehat \epsilon_{X_i}(Z_i) = X_i - \widehat f(Z_i) \quad \text{ and }\quad \widehat \epsilon_{Y_i}(Z_i) = Y_i - \widehat g(Y_i)
\end{align}
and note we can estimate the $\wGCM(X,Y;Z)$ by $\frac{1}{n} \sum_{i=1}^n \omega(Z_i) \widehat\epsilon_{X_i}(Z_i) \widehat\epsilon_{Y_i}(Z_i)$, which should be close to 0 under the null hypothesis. For our developments, it is more convenient to re-scale the previous estimate by $\sqrt{n}$ to obtain the test-statistic

$$S_n(\omega) = \frac{1}{\sqrt n} \sum_{i=1}^n \omega(Z_i)\widehat \epsilon_{X_i}(Z_i)\widehat \epsilon_{Y_i}(Z_i).$$

\subsubsection{A test based on the Kernelised GCM}
As expected, the $\wGCM$ requires the user to input a weight function that needs to be chosen carefully. Following the spirit of this work, we consider the kernelisation of the $\wGCM$, thus our test statistic is
\begin{align*}
    \Psi_n&=\sup_{\omega\in\mathcal{H}:\LH{\omega}=1} S_n(\omega)^2,\quad\text{where}\quad    S_n(\omega)=\frac{1}{\sqrt{n}}\sum_{i=1}^n \omega(Z_i)\widehat \epsilon_{X_i}(Z_i)\widehat \epsilon_{Y_i}(Z_i).
\end{align*}
To analyse the kernelised $\GCM$ we need some conditions on the regression estimators $\hat f$ and $\hat g$, in order to ensure that the estimation is good enough, as well as some other regularity conditions. 

\begin{condition}\label{Cond:condIndep}
Consider the following quantities:
\begin{align*}
    A_f=\frac{1}{n}\sum_{i=1}^n(f(Z_i)-\widehat{f}(Z_i))^2&,\qquad u_f(z,y)=\E(\epsilon_X(Z)^2|Z=z,Y=y)\\
    A_g=\frac{1}{n}\sum_{i=1}^n(g(Z_i)-\widehat{g}(Z_i))^2&,\qquad v_g(z,x)=\E(\epsilon_Y(Z)^2|Z=z,X=x).
\end{align*}
We assume the following conditions hold. 
\begin{itemize}
    \item[i.] $A_f=o_p(n^{-1/2})$ and $A_g=o_p(n^{-1/2})$, and
    \item[ii.] $u_f(z,y)$ and $v_g(z,x)$ are uniformly bounded
    \item[iii.] $0<\E(\epsilon_X^2\epsilon_Y^2)$. 
    \item[iv.] There exists a constant $C>0$ such that $|K(z,z')|\leq C$ for all $z,z'\in\R^{d}$
\end{itemize}
\end{condition}

\begin{remark}
 The above conditions are slightly stronger than the corresponding conditions of \cite{scheidegger2021weighted}, but they allow us to avoid splitting the data as  in \cite{scheidegger2021weighted} (e.g. use half of the data to estimate $f$ and $g$, and the other half in the testing procedure). Our conditions now require that the conditional variances $u_P(z,y)$ and $v_P(z,x)$ are uniformly bounded, which implies $\E(\epsilon_X^2\epsilon_Y^2)<\infty$. This condition is not at all restrictive and in many setting it is assumed that the variance of the regression errors is bounded, and moreover, it can easily be relaxed at the price of having less clear statements and longer proofs.
\end{remark}
\begin{remark}
 Note that under the null hypothesis, $u_p$ and $v_p$ are only functions of $z\in\R^{d}$.
\end{remark}

We proceed to enunciate the main results that ensure that the proposed test-statistic leads to an asymptotically correct test. We defer the proofs of those results to \Cref{sec:deferedProofs} after the experiments, so we keep the focus on the results of this new test, rather than on the technical details.

The next theorem gives the limit distribution of $\Psi_n$ under the null hypothesis.
\begin{theorem}\label{thm:conditest}
Suppose that  \Cref{Cond:condIndep} holds. Then, under the null hypothesis it holds
\begin{align*}
\Psi_n&=\sup_{\unitball} S_n(\omega)^2\overset{\dist}{\to}\Psi:=\sum_{i=1}^\infty\lambda_i\Q_i^2,
\end{align*}
as $n$ grows to infinity, where $\Q_1,\Q_2,\ldots$ are i.i.d. standard normal random variables, and $\lambda_1,\lambda_2,\ldots$ are the eigenvalues associated to the operator $T_\sigma:\mathcal{H}\to \mathcal{H}$ associated to the covariance 
\begin{align}
    \sigma(\omega, \omega') = \int_{\R^{d}} \omega(z)\omega'(z) \E(\epsilon_X(Z)^2|Z)\E(\epsilon_Y(Z)^2|Z)dF_Z(z).\label{eqn:Tsigma2}
\end{align}
\end{theorem}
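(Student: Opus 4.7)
The plan is to apply \Cref{thm:ConvergenceSumChiSq} to $(S_n)_{n\geq 1}$ after reducing to an oracle statistic that uses the true regression functions in place of $\widehat f,\widehat g$. Using $\widehat\epsilon_{X_i}=\epsilon_X(Z_i)+(f-\widehat f)(Z_i)$ and the analogue for $Y$, expand the product $\widehat\epsilon_{X_i}\widehat\epsilon_{Y_i}$ to obtain $S_n=T_n+E_n^{(1)}+E_n^{(2)}+E_n^{(3)}$, where
$$T_n(\omega)=\frac{1}{\sqrt n}\sum_{i=1}^n\omega(Z_i)\epsilon_X(Z_i)\epsilon_Y(Z_i),\qquad E_n^{(3)}(\omega)=\frac{1}{\sqrt n}\sum_{i=1}^n\omega(Z_i)(f-\widehat f)(Z_i)(g-\widehat g)(Z_i),$$
and $E_n^{(1)},E_n^{(2)}$ are the two mixed terms. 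The strategy is: (a) verify \Cref{Cond:0bilinear,Cond:2Tail,Cond:1bound} for $T_n$, and (b) show $\|E_n^{(j)}\|_{\mathcal H\to\R}\overset{\Prob}{\to}0$ for $j=1,2,3$, so the conclusion for $S_n$ follows by a Slutsky-type argument in $\mathcal H^\star$, and the continuous mapping theorem then delivers the weighted chi-squared limit for $\Psi_n$.

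For step (a): under $H_0$, $\E(\epsilon_X(Z)\epsilon_Y(Z)\mid Z)=0$ and $\E(\epsilon_X^2\epsilon_Y^2\mid Z)=\E(\epsilon_X^2\mid Z)\E(\epsilon_Y^2\mid Z)$, uniformly bounded by some $M^2$ thanks to \Cref{Cond:condIndep}.ii. Hence $T_n(\omega)$ is a normalised sum of i.i.d. mean-zero random variables with finite second moment, and the classical CLT together with the Cram\'er--Wold device delivers \Cref{Cond:0bilinear} with the bilinear form $\sigma$ of \eqref{eqn:Tsigma2}. For \Cref{Cond:2Tail}, for any orthonormal basis $(\phi_i)_{i\geq 1}$, \Cref{lemma:twosampleTraceClass} applied to $F_Z$ gives $\sum_i\sigma(\phi_i,\phi_i)\leq M^2\int K(z,z)dF_Z(z)\leq M^2 C<\infty$. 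Condition \Cref{Cond:1bound} follows from Markov's inequality applied to $\|T_n\circ P_{V_i^\perp}\|^2_{\mathcal H\to\R}=\sum_{k\geq i+1}T_n(\phi_k)^2$, since $\E(T_n(\phi_k)^2)\leq M^2\int\phi_k(z)^2 dF_Z(z)$ and the tail sum vanishes by \Cref{lemma:twosampleTraceClass}.

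For step (b): the quadratic term $E_n^{(3)}$ is controlled by Cauchy--Schwarz and the uniform bound $|\omega(Z_i)|\leq\sqrt C\|\omega\|_{\mathcal H}$, yielding $\|E_n^{(3)}\|_{\mathcal H\to\R}\leq\sqrt{CnA_fA_g}=o_p(1)$ by \Cref{Cond:condIndep}.i. The mixed terms are more delicate: a blunt Cauchy--Schwarz bound on $E_n^{(1)}(\omega)=\tfrac{1}{\sqrt n}\sum_i\omega(Z_i)\epsilon_X(Z_i)(g-\widehat g)(Z_i)$ would give only $O_p(n^{1/4})$, which is useless. Instead I would condition on $\mathcal G=\sigma(Z_1,\ldots,Z_n,Y_1,\ldots,Y_n,\widehat g)$; under $H_0$, $\E(\epsilon_X(Z_i)\mid\mathcal G)=0$ and the $\epsilon_X(Z_i)$ are conditionally independent with conditional variances $u_f(Z_i,Y_i)\leq M$. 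Writing $\|E_n^{(1)}\|_{\mathcal H\to\R}^2=\sum_k E_n^{(1)}(\phi_k)^2$ and using the reproducing identity $\sum_k\phi_k(z)^2=K(z,z)\leq C$, the double sum telescopes to give
$$\E\bigl(\|E_n^{(1)}\|_{\mathcal H\to\R}^2\,\big|\,\mathcal G\bigr)\leq \frac{MC}{n}\sum_{i=1}^n(g-\widehat g)^2(Z_i)=MCA_g=o_p(n^{-1/2}),$$
whence $\|E_n^{(1)}\|_{\mathcal H\to\R}\overset{\Prob}{\to} 0$ by conditional Markov and dominated convergence; $E_n^{(2)}$ is handled symmetrically by conditioning on $(Z,X,\widehat f)$. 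The main obstacle is precisely this handling of $E_n^{(1)},E_n^{(2)}$: it is the conditional mean-zero structure, available only under $H_0$, that allows the assumption $A_f,A_g=o_p(n^{-1/2})$ to be \emph{just} enough to absorb the estimation error without needing data splitting as in \cite{scheidegger2021weighted}.
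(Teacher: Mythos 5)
Your proposal is correct and follows essentially the same route as the paper: the paper's proof reduces $S_n$ to the oracle statistic $\widetilde S_n$ via \Cref{Lemma:approx} (whose proof uses exactly your decomposition, the Cauchy--Schwarz bound $nCA_fA_g$ for the quadratic term, and conditioning on $(\boldsymbol{Y},\boldsymbol{Z})$ to exploit the conditional mean-zero structure of the mixed terms), then verifies \Cref{Cond:0bilinear,Cond:2Tail,Cond:1bound} for $\widetilde S_n$ exactly as you do. Your only refinement is to bound the full operator norm $\|E_n^{(j)}\|_{\mathcal H\to\R}$ by summing over a basis and using $\sum_k\phi_k(z)^2=K(z,z)$, which makes the Slutsky step slightly more explicit than the paper's pointwise-uniform statement; note also that the $o_p(n^{-1/2})$ rate on $A_f,A_g$ is really only needed for the quadratic term, while the mixed terms need only $A_g=o_p(1)$ once the conditional centering is in place.
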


Under the alternative hypothesis we have the following asymptotic result.

\begin{theorem}\label{thm:ALteCLI}
Suppose that  \Cref{Cond:condIndep} holds. Then, under the alternative hypothesis it exists a constant $c^\star\in \R$ such that $\frac{1}{n}\Psi_n\overset{\Prob}{\to} (c^\star)^2$. Moreover, if it exists a measurable $A\subseteq \R^{d}$ such that  $\E(\epsilon_X(Z)\epsilon_Y(Z)|Z) \neq 0$ for $Z \in A$, and $\mathcal H$ is $c_0$-universal (see \Cref{sec:alternativetwosample}) then $c^\star \neq 0$.
\end{theorem}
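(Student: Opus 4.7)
The plan is to apply \Cref{Thm:alternati} to the rescaled sequence $T_n := n^{-1/2} S_n$, so that $T_n(\omega) = \frac{1}{n}\sum_{i=1}^n \omega(Z_i)\widehat\epsilon_{X_i}(Z_i)\widehat\epsilon_{Y_i}(Z_i)$ and $\frac{1}{n}\Psi_n = \sup_{\LH{\omega}=1} T_n(\omega)^2$. The theorem requires (i) the pointwise limit $T_n(\omega) \overset{\Prob}{\to} c(\omega)$ for every $\omega \in \mathcal H$, with the natural candidate $c(\omega) := \E(\omega(Z)\epsilon_X(Z)\epsilon_Y(Z))$, and (ii) the tail-tightness condition $\lim_{u\to\infty}\limsup_{n\to\infty}\Prob\bigl(\sum_{k>u} T_n(\phi_k)^2 \geq \varepsilon\bigr) = 0$ for some (hence every) orthonormal basis $(\phi_k)_{k\geq 1}$ of $\mathcal H$. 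Once both hold, the conclusion $\frac{1}{n}\Psi_n \overset{\Prob}{\to} (c^\star)^2$ with $c^\star := \sup_{\LH{\omega}=1} c(\omega)$ is immediate, and $c^\star$ is finite because the bounded kernel from Condition iv of \Cref{Cond:condIndep} makes the unit ball of $\mathcal H$ uniformly bounded.

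For step (i), I would use the three-term decomposition $\widehat\epsilon_{X_i}\widehat\epsilon_{Y_i} - \epsilon_{X_i}\epsilon_{Y_i} = \epsilon_{X_i}(g-\widehat g)(Z_i) + \epsilon_{Y_i}(f-\widehat f)(Z_i) + (f-\widehat f)(Z_i)(g-\widehat g)(Z_i)$. The leading term $\frac{1}{n}\sum_i \omega(Z_i)\epsilon_{X_i}\epsilon_{Y_i}$ converges almost surely to $c(\omega)$ by the SLLN. The crucial observation for the two single-residual cross terms is that $\widehat g$ depends on the data only through $(Z,Y)$, so conditioning on $(Z,Y)$ makes $\E(\epsilon_{X_i}^2\mid Z,Y)=u_f(Z_i,Y_i)\leq M$ by Condition ii of \Cref{Cond:condIndep}; Markov plus Cauchy--Schwarz then give $|\frac{1}{n}\sum\omega(Z_i)\epsilon_{X_i}(g-\widehat g)(Z_i)|=O_p(\sqrt{A_g})=o_p(n^{-1/4})$, with the symmetric argument handling the $f$-term. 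The pure regression-error product term is of higher order and is bounded via a further Cauchy--Schwarz using $A_fA_g=o_p(n^{-1})$.

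For step (ii), the cleanest route is via the Riesz representer: $T_n(\omega) = \InerH{\omega}{\widehat\xi_n}$ with $\widehat\xi_n := \frac{1}{n}\sum_i \widehat\epsilon_{X_i}\widehat\epsilon_{Y_i} K_{Z_i}\in\mathcal H$, so that $\sum_{k>u} T_n(\phi_k)^2 = \LH{P_{V_u^\perp}\widehat\xi_n}^2$. Introduce the population analogue $\xi := \E(\epsilon_X\epsilon_Y K_Z)\in\mathcal H$, which is well defined since $\E\LH{\epsilon_X\epsilon_Y K_Z}\leq\sqrt{C}\,\E|\epsilon_X\epsilon_Y|<\infty$. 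I would then split
\begin{align*}
\LH{P_{V_u^\perp}\widehat\xi_n}^2 \leq 2\LH{P_{V_u^\perp}\xi}^2 + 2\LH{\widehat\xi_n-\xi}^2.
\end{align*}
The first term is deterministic and vanishes as $u\to\infty$ because $\LH{\xi}^2=\sum_k\InerH{\phi_k}{\xi}^2<\infty$. For the second, decompose $\widehat\xi_n-\xi = (\widehat\xi_n-\xi_n)+(\xi_n-\xi)$ with $\xi_n:=\frac{1}{n}\sum_i\epsilon_{X_i}\epsilon_{Y_i}K_{Z_i}$: the centered i.i.d.\ average satisfies $\E\LH{\xi_n-\xi}^2\leq n^{-1}C\,\E(\epsilon_X^2\epsilon_Y^2)\to 0$, whereas $\LH{\widehat\xi_n-\xi_n}^2 \leq \frac{C}{n}\sum_i (\widehat\epsilon_{X_i}\widehat\epsilon_{Y_i}-\epsilon_{X_i}\epsilon_{Y_i})^2$ (via the bounded kernel), and feeding in the same three-term expansion and conditioning arguments of step (i) shows this is $o_p(1)$.

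Finally, for the assertion on $c^\star\neq 0$, write $c(\omega) = \int_{\R^d}\omega(z)\,d\mu(z)$ with the finite signed measure $d\mu(z) := \E(\epsilon_X(Z)\epsilon_Y(Z)\mid Z=z)\,dF_Z(z)$. The hypothesis that $\E(\epsilon_X\epsilon_Y\mid Z)\neq 0$ on a set $A$ of positive $F_Z$-measure forces $\mu\not\equiv 0$, and $c_0$-universality of $\mathcal H$ then yields $c^\star = \sup_{\LH{\omega}=1}\int\omega\,d\mu > 0$. I expect the main technical hurdle to be the control of the pure regression-error product term $\frac{1}{n}\sum_i(f-\widehat f)^2(Z_i)(g-\widehat g)^2(Z_i)$ inside step (i): without sample splitting there is no clean conditioning shortcut, and squeezing an $o_p(1)$ bound out of the $o_p(n^{-1/2})$ rates on $A_f$ and $A_g$ alone may require a supplementary sup-norm or fourth-moment control on the regression errors.
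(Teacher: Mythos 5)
Your proposal is correct in outline and shares the paper's skeleton: both apply \Cref{Thm:alternati} to the $n^{-1/2}$-rescaled statistic with the limit functional $c(\omega)=\E(\omega(Z)\epsilon_X(Z)\epsilon_Y(Z))$, and the final $c_0$-universality argument via the signed measure $d\mu(z)=\E(\epsilon_X\epsilon_Y\mid Z=z)\,dF_Z(z)$ is exactly the paper's. Where you diverge is in the order of operations for handling the estimated residuals. The paper first invokes \Cref{Lemma:approx}, whose $o_p(1)$ error is \emph{uniform over the unit ball} (this is immediate from your own bounds, since $|\omega(z)|\le\sqrt C$ there), which gives $\frac1n\Psi_n=\frac1n\widetilde\Psi_n+o_p(1)$ and reduces the entire problem to the oracle statistic $\widetilde S_n(\omega)=n^{-1/2}\sum_i\omega(Z_i)\epsilon_{X_i}(Z_i)\epsilon_{Y_i}(Z_i)$; the tail condition of \Cref{Thm:alternati} is then a one-line Markov bound using $\E(\phi_k(Z)^2\epsilon_X^2\epsilon_Y^2)\le C\E(\phi_k(Z)^2)$ and \Cref{lemma:twosampleTraceClass}. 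You instead only use the approximation pointwise in $\omega$ and then verify the tail condition directly on the estimated-residual statistic via the Riesz representer $\widehat\xi_n$, which is what creates the extra work and the hurdle you flag. That hurdle is in fact resolvable without new assumptions: for the term $\frac1n\sum_i(f-\widehat f)^2(Z_i)(g-\widehat g)^2(Z_i)$ use that for nonnegative reals $\sum_i a_ib_i\le(\sum_i a_i)(\sum_i b_i)$, giving the bound $nA_fA_g=o_p(1)$, while the cross terms like $\frac1n\sum_i\epsilon_{X_i}^2(g-\widehat g)^2(Z_i)$ are handled by the same conditioning-on-$(\boldsymbol{Y},\boldsymbol{Z})$ plus dominated-convergence device used in \Cref{Lemma:approx}. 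So your route closes, but noticing the uniformity of the approximation (as the paper does) removes the Riesz-representer step and the worry entirely.
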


The above shows that if $c^\star \neq 0$, then $\Psi_n\to \infty$ under the alternative, so asymptotically we should reject the null if since $\Psi_n=O_p(1)$ under the null. Nevertheless, we do not know the theoretical distribution under the null, so we shall use a resampling scheme to construct a rejection region. 

Define the wild bootstrap version of $\Psi_n$ as 
\begin{align*}
    \Psi_n^W&=\sup_{\omega\in\mathcal{H}:\LH{\omega}=1} S_n^W(\omega)^2,\quad\text{ where }\quad    S_n^W(\omega)=\frac{1}{\sqrt{n}}\sum_{i=1}^n W_i\widehat \epsilon_{X_i}(Z_i)\widehat \epsilon_{Y_i}(Z_i)\omega(Z_i),
\end{align*}
where $W_1,\ldots,W_n$ are i.i.d. random variables with mean 0 and variance 1. As usual, as we are conditioning on the data points, we write $\Prob_D,\E_D,$ etc, as defined in \Cref{sec:twoSampleWB}.

\begin{theorem}\label{thm:Wild} Suppose that  \Cref{Cond:condIndep} holds. Then, 
$\Psi_n^W\overset{\dist_D}{\to}\Psi^W:=\sum_{i=1}^\infty\lambda_i\Q_i^2$, where $\Q_1,\Q_2,\ldots$ are i.i.d. standard normal random variables, and $\lambda_1,\lambda_2,\ldots$ are the eigenvalues of the operator $T_{\sigma^W}:\mathcal{H\to\mathcal{H}}$ associated with the covariance
\begin{align}
     \sigma^W(\omega,\omega')&=\int_{\R^{d}}\omega(z)\omega'(z)\E(\epsilon_X(Z)^2\epsilon_Y(Z)^2|Z=z)dF_Z(z).\label{eqn:gamma}
     \end{align}
 \end{theorem}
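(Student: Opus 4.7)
The plan is to invoke \Cref{thm:ConvergenceSumChiSq} but applied conditionally on the data: verify \Cref{Cond:0bilinear,Cond:2Tail,Cond:1bound} with the bilinear form $\sigma^W$ defined in \cref{eqn:gamma}, treating the weights $W_1,\ldots,W_n$ as the sole source of randomness. The key observation is that conditional on the data $D=(X_i,Y_i,Z_i)_{i=1}^n$, for every fixed $\omega\in\mathcal{H}$ the quantity $S_n^W(\omega)=\tfrac{1}{\sqrt n}\sum_i W_i\widehat\epsilon_{X_i}(Z_i)\widehat\epsilon_{Y_i}(Z_i)\omega(Z_i)$ is a sum of independent, mean-zero random variables whose conditional variance is $\tfrac1n\sum_i\widehat\epsilon_{X_i}(Z_i)^2\widehat\epsilon_{Y_i}(Z_i)^2\omega(Z_i)^2$.

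For \Cref{Cond:0bilinear}, the plan is to apply the Cram\'er--Wold device and Lyapunov's CLT conditionally on $D$. The heart of the matter is to show that for any $\omega,\omega'\in\mathcal{H}$,
\begin{align*}
\frac{1}{n}\sum_{i=1}^n \widehat\epsilon_{X_i}(Z_i)^2\widehat\epsilon_{Y_i}(Z_i)^2\omega(Z_i)\omega'(Z_i)\overset{\Prob}{\to}\sigma^W(\omega,\omega').
\end{align*}
Writing $\widehat\epsilon_X=\epsilon_X+(f-\widehat f)(Z)$ and similarly for $Y$, I would expand the product, handle the main term $\tfrac1n\sum_i\epsilon_{X_i}^2\epsilon_{Y_i}^2\omega\omega'(Z_i)$ by the law of large numbers (its expectation is $\sigma^W(\omega,\omega')$ by the tower property and the definition of $u_f,v_g$), and bound each remainder using Cauchy--Schwarz together with $A_f,A_g=o_p(n^{-1/2})$ and the uniform boundedness of $u_f,v_g$ and $\omega,\omega'$ (the latter inherited from the bounded kernel). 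Conditional Lyapunov then follows because $\omega(Z_i)$ and the product of conditional second moments of $\widehat\epsilon_{X_i}\widehat\epsilon_{Y_i}$ are uniformly bounded, provided $W_i$ has a bit more than two moments (a standard wild-bootstrap assumption).

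For \Cref{Cond:2Tail}, fix an orthonormal basis $(\phi_i)$ of $\mathcal{H}$. By iterated conditioning, $\E(\epsilon_X(Z)^2\epsilon_Y(Z)^2\mid Z)=\E(\epsilon_Y^2 u_f(Z,Y)\mid Z)$, which is uniformly bounded by \Cref{Cond:condIndep}(ii). Hence
\begin{align*}
\sum_{i=1}^\infty \sigma^W(\phi_i,\phi_i)\le C\sum_{i=1}^\infty\int \phi_i(z)^2 dF_Z(z)=C\int K(z,z)\,dF_Z(z)<\infty,
\end{align*}
by \Cref{lemma:twosampleTraceClass} and the boundedness of $K$.

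For \Cref{Cond:1bound}, use $\|S_n^W\circ P_{V_i^\perp}\|^2=\sum_{k>i}S_n^W(\phi_k)^2$ and the Markov inequality conditional on $D$ to get
\begin{align*}
\Prob_D\!\left(\|S_n^W\circ P_{V_i^\perp}\|^2\ge\varepsilon\right)\le\frac{1}{\varepsilon}\cdot\frac{1}{n}\sum_{j=1}^n \widehat\epsilon_{X_j}^2\widehat\epsilon_{Y_j}^2\,\LH{P_{V_i^\perp}K_{Z_j}}^2,
\end{align*}
since $\sum_{k>i}\phi_k(Z_j)^2=\LH{P_{V_i^\perp}K_{Z_j}}^2$. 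Replacing $\widehat\epsilon$ by $\epsilon$ as before (using $A_f,A_g=o_p(n^{-1/2})$) and applying the LLN, this upper bound converges in probability to $\tfrac{1}{\varepsilon}\E(\epsilon_X^2\epsilon_Y^2\LH{P_{V_i^\perp}K_Z}^2)\le \tfrac{C}{\varepsilon}\int\LH{P_{V_i^\perp}K_z}^2 dF_Z(z)$, which tends to $0$ as $i\to\infty$ by \Cref{lemma:twosampleTraceClass} and dominated convergence. Once the three conditions are in place, \Cref{thm:ConvergenceSumChiSq} (applied conditionally, the formulation being identical since it only uses distributional convergence) delivers $\Psi_n^W\overset{\dist_D}{\to}\sum_i\lambda_i Q_i^2$ with $\lambda_i$ the eigenvalues of $T_{\sigma^W}$.

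The main obstacle is the propagation of the regression error from $\widehat\epsilon$ to $\epsilon$ uniformly across the basis in the tightness step: I must control $\tfrac1n\sum_j(\widehat\epsilon_{X_j}^2\widehat\epsilon_{Y_j}^2-\epsilon_{X_j}^2\epsilon_{Y_j}^2)\LH{P_{V_i^\perp}K_{Z_j}}^2$ with a bound that is uniform in $i$ and vanishes as $n\to\infty$. The rate $A_f=o_p(n^{-1/2})$ combined with Cauchy--Schwarz and $\LH{P_{V_i^\perp}K_z}^2\le K(z,z)\le C$ should give a remainder of order $o_p(n^{-1/2})\cdot O_p(1)$, uniformly in $i$, which is what is needed to take $\limsup_n$ before $\lim_i$.
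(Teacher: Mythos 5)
Your proposal is correct in outline but follows a genuinely different route from the paper. The paper first proves a bootstrap analogue of \Cref{Lemma:approx}, namely that $S_n^W(\omega)=\widetilde S_n^W(\omega)+o_p(1)$ uniformly over the unit ball, where $\widetilde S_n^W$ uses the \emph{true} residuals $\epsilon_{X_i},\epsilon_{Y_i}$; by Slutsky it then suffices to verify \Cref{Cond:0bilinear,Cond:2Tail,Cond:1bound} for the oracle statistic $\widetilde S_n^W$, whose conditional covariance is the clean i.i.d.\ average $\frac1n\sum_i\omega(Z_i)\omega'(Z_i)\epsilon_{X_i}^2\epsilon_{Y_i}^2$, handled by the law of large numbers. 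You instead keep the estimated residuals throughout and absorb the regression error into each of the three conditions separately. This buys you one fewer lemma, at the price of pushing the error analysis into a \emph{quadratic} functional of $\widehat\epsilon_{X}\widehat\epsilon_{Y}$ rather than a linear one: when you expand $\frac1n\sum_i\widehat\epsilon_{X_i}^2\widehat\epsilon_{Y_i}^2\omega\omega'(Z_i)$, remainder terms such as $\frac1n\sum_i\epsilon_{X_i}^2\epsilon_{Y_i}(g-\widehat g)(Z_i)\omega\omega'(Z_i)$ and $\frac1n\sum_i\epsilon_{X_i}^2(g-\widehat g)(Z_i)^2\omega\omega'(Z_i)$ cannot be dispatched by a raw Cauchy--Schwarz across the sample, since that would require $\E(\epsilon_X^4\epsilon_Y^2)<\infty$, which \Cref{Cond:condIndep} does not supply. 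The fix is the same device used in the proof of \Cref{Lemma:approx}: condition on $(\boldsymbol Y,\boldsymbol Z)$ (with respect to which $\widehat g$ and $\epsilon_{Y_i}$ are deterministic), replace $\epsilon_{X_i}^2$ by its conditional expectation $u_f(Z_i,Y_i)\le C'$, and only then apply Cauchy--Schwarz to pick up the factor $A_g^{1/2}=o_p(n^{-1/4})$, finishing with the truncation-and-dominated-convergence step. With that amendment your verification of \Cref{Cond:0bilinear} goes through, your \Cref{Cond:2Tail} argument matches the paper's, and your tightness bound is sound because $\LH{P_{V_i^\perp}K_z}^2\le K(z,z)\le C$ makes the replacement of $\widehat\epsilon$ by $\epsilon$ uniform in $i$, exactly as you note.
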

 We first remark that the theorem above holds under the null and under the alternative hypotheses. Now, note that, on the one hand, under the null we have that $\sigma^W = \sigma$ as defined in \cref{eqn:Tsigma2}, so the limiting distribution  $\Psi_n$ and $\Psi_n^W$ are the same, meaning that under the null hypothesis we can use $\Psi_n^W$ to resample from $\Psi_n$ to approximate its rejection region with asymptotic guarantees (by resampling from $\Psi_n^W$ as much as needed), so our test asymptotically reaches the desired level. On the other hand, note that by combining \Cref{thm:ALteCLI} and \Cref{thm:Wild} we obtain that under the alternative hypothesis $\Psi_n^W = O_p(1)$ while $\Psi_n \to \infty$ (under the conditions of \Cref{thm:ALteCLI}) which means that under the alternative our test rejects the alternative with probability tending to 1. Also, note that \Cref{Cond:condIndep}.iii ensures that $\sigma$ and $\sigma^W$ are different from zero, so the limiting distributions are not trivial.

\subsubsection{Experiments}\label{sec:ExperimentsKGCM}

In this section, we study the behaviour of the Kernelised $\GCM$, from now on $\KGCM$, in two simulated data sets in order to evaluate its performance in practice. In our experiment we implement three versions of the kernelised $\GCM$, from now on $\KGCM$, by choosing three different RKHS's (or rather three different kernels). We also implemented the test based on the $\wGCM$ and the $\GCM$ for comparison purposes.

Regarding implementations details, for the $\KGCM$, we choose the kernels as the squared exponential kernel $K_\ell(z,z'):\R^{d}\times \R^{d}\to \R$, which is given by $K_\ell(z,z')=\exp\{-\frac{1}{\ell^2}\|z-z'\|^2\}$, where $\ell^2\in \R$ is the length-scale (or bandwidth) parameter. The length-scale parameter $\ell$ controls the fluctuations of the functions of $\mathcal H$. A larger length-scale parameter is associated with flatter curves, whilst a smaller one is associated with functions with more fluctuations. Thus, a smaller length-scale parameter should be preferred for problems involving non-linear structures. A known heuristic to choose the length-scale parameter is the \textit{median heuristic} which chooses $\ell^2$ as the median of all the pairwise differences $\|Z_i-Z_j\|^2$ for $i,j\in\{1,\ldots,n\}$. In our experiments we implement three versions of the $\KGCM$, which we name as \textbf{$\KGCM$-1}, \textbf{$\KGCM$-2} and \textbf{$\KGCM$-3}, in which we use the length-scales: $\{0.1,1,\textit{median heuristic}\}$, respectively. While we do not pursue the goal of finding the best length-scale in our experiments, we remark that the problem of choosing an appropriate length-scale is currently a very active research topic in statistics and machine learning, and we refer to \cite{Albert2022Adaptive,schrab2021mmd,Schrab2022KSD,schrab2022efficient} for very recent results in the area. To find rejection regions we use wild bootstrap. In all cases we use $M=1000$ independent wild bootstrap samples (i.e. we sampled 1000 times the weights $(W_i)_{i=1}^n$), which are used to construct the region. We choose the weights as Rademacher random variables. In our experiments we choose the level of the test as $\alpha = 0.05$. Since the test-statistic $\Psi_n^2$ is non-negative, the rejection for the $\KGCM$ is chosen as $(q_{1-\alpha}^M,\infty)$ where $q_{1-\alpha}^M$ is the value in position $(1-\alpha)M$ of the $M$ bootstrap samples (when sorted in increasing order).
  
In order to compare our methods, we implement the $\GCM$ and the $\wGCM$. For the $\wGCM$, we use fixed weight functions, and we refer to \cite{scheidegger2021weighted} for details on the implementation. Our first experiment considers $d = 1$, and in such case we consider the weight function $\omega(z)=\text{sign}(z)$ (as it was used in the experiments of \cite{scheidegger2021weighted}). Our second experiment has $d>1$, and thus we combine $d+1$ weight functions $\omega_0,\omega_1,\ldots,\omega_{d}$ in the $\wGCM$. Following \cite{scheidegger2021weighted} the functions are chosen as  $\omega_0(\boldsymbol{z})=1$ and $\omega_i(\boldsymbol{z})=\text{sign}(z_i)$ for any $i\in\{1,\ldots,d\}$. The implementation of the $\GCM$ is rather straightforward because it is the $\wGCM$ with weight function $\omega = 1$.

Recall that the $\GCM$ and its variations require the estimation of the conditional means $\E(X|Z)$ and $\E(Y|Z)$. For this task we use polynomial regression as for our simulated data it is enough to have a good estimate. This allows us to focus on the testing part of the problem, rather than the estimation part of the problem. However, we highlight that $\GCM$, $\wGCM$ and the $\KGCM$ rely on selecting a good regression procedure satisfying \Cref{Cond:condIndep}, and thus, for complex datasets we would require more sophisticated regression methods to perform well. 
We proceed to describe our data sets, and the obtained results.

\begin{itemize}
    \item \textbf{Data 1:} Let $U_1\sim N(0,1)$ and $U_2\sim N(0,1)$ be independent. Given a parameter $\gamma\in[0,1]$, we generate data as $Z\sim N(0,1)$, $X=Z+U_1\sin(5Z)$ and $Y=Z^2+\gamma U_1+(1-\gamma)U_2$.
\end{itemize}

In this experiment we vary the parameter $\gamma$, so we compute rejection rates for each of them (in a grid). Our experiments consider $n=100$ data points, and to estimate the rejection rate we repeat 1000 times the experiment. The results of this experiment are shown in \Cref{fig:my_label}.

On the one hand, it is not difficult to see that if $\gamma=0$, then the null hypothesis holds. Thus, in this case, the rejection rate should not be greater than the level of the test given by  $\alpha=0.05$. In \Cref{fig:my_label} (bottom right) we observe that all tests show a rejection rate close to $\alpha=0.05$ (dashed black line), which shows a correct Type-I error. On the other hand, when $\gamma$ grows, the conditional dependence of $X$ and $Y$ given $Z$ starts to be more noticeable. Indeed, we would expect that the rejection rate (power) starts growing as $\gamma$ approaches 1 for all tests. Note however that this will not happen for the $\GCM$ as for any value of $\gamma\in[0,1]$, we have that
\begin{align*}
    \E(\epsilon_X(Z)\epsilon_Y(Z))&=\E((X-\E(X|Z))(Y-\E(Y|Z)))=\gamma\E(\sin(5Z))=0,
\end{align*}
because $\E(X|Z)=Z$ and $\E(Y|Z)=Z^2$. 
\begin{figure}
    \centering
    \includegraphics[scale=0.35]{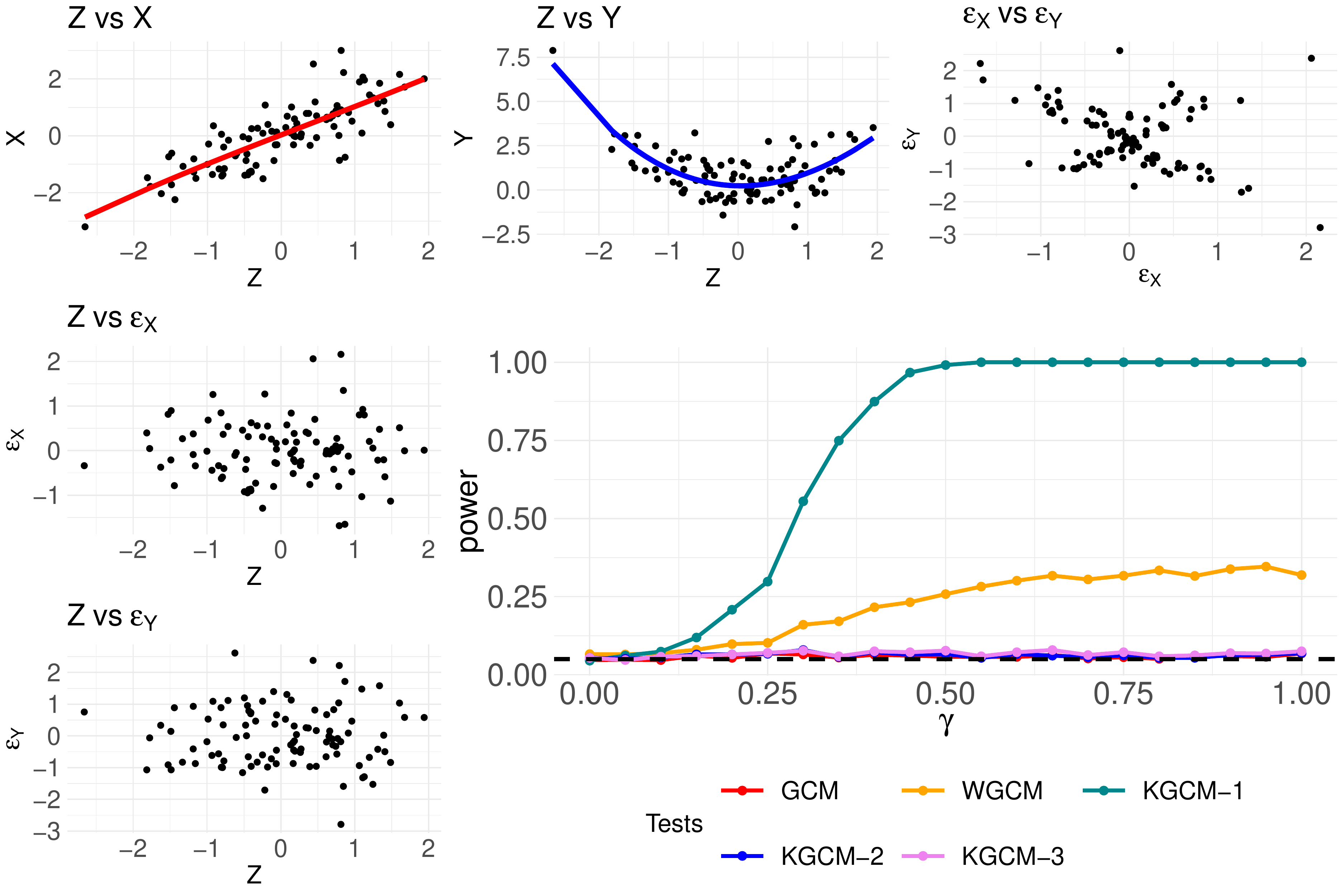}
    \caption{Scatter plots for $\gamma=1$. On the bottom-right corner, the picture shows the power of the different test for different values of $\gamma$. The null hypothesis is recovered only for $\gamma = 0$.}
    \label{fig:my_label}
\end{figure}

As a consequence of the previous result, we expect that the $\GCM$ fails to reject the null hypothesis when the null is false, i.e. when $\gamma>0$. This behaviour can be observed in \Cref{fig:my_label}. 

Our experiments show promising results for the \textbf{$\KGCM$-1} test, which uses a length-scale parameter of $\ell^2=0.1$. This good result can be explained through the fact that a smaller length-scale parameter is associated with functions with more fluctuations and thus it can be a good candidate as our data is generated by  the function $z\to\sin(5z)$. Finally, we observe that the $\wGCM$ is able to detect some dependence, but the results are not optimal.

\begin{itemize}
    \item \textbf{Data 2:} Let $d\geq 2$. We generate data as follows    \begin{align*}
        \boldsymbol{Z}=(Z_1,\ldots,Z_{d})\sim N(0,I_{d}),\quad X=Z_1+\frac{1}{\sqrt{{d}}}\sum_{i=1}^{d} U_iZ_i,\quad\text{and}\quad Y=Z_2+\frac{1}{\sqrt{{d}}}\sum_{i=1}^{d} U_i,
    \end{align*}
where $\boldsymbol{U}=(U_1,\ldots,U_{d})\sim N(0,I_{d})$ is independent of $\boldsymbol{Z}$, and $I_d$ is the $d\times d$ identity matrix.
\end{itemize}
In this experiment we now consider a multivariate $\boldsymbol{Z}$ having $d$ dimensions (we use bold letters to remark the fact that we have a vector in $\R^{d}$). The goal in this experiment is to test whether $X$ and $Y$ are independent given the random vector $\boldsymbol{Z}$. Note that in this case, $X$ is not independent of $Y$ given $\boldsymbol{Z}$ since both $X$ and $Y$ depend on the vector $\boldsymbol{U}$. Also, observe that $\E(X|\boldsymbol{Z})=Z_1$ and $\E(Y|\boldsymbol{Z})=Z_2$, from which it can be easily deduced that $\E(\epsilon_X\epsilon_Y)=0$. Lastly, observe that by the Central Limit Theorem, it holds that
\begin{align*}
\epsilon_X=\frac{1}{\sqrt{d}}\sum_{i=1}^{d}U_iZ_i\overset{\dist}{\to}N(0,1),\quad\text{and}\quad \epsilon_Y=\frac{1}{\sqrt{d}}\sum_{i=1}^{d}U_i\overset{\dist}{\to}N(0,1),
\end{align*}
when $d$ grows to infinity. Then, since $\E(\epsilon_X(\boldsymbol{Z})\epsilon_Y(\boldsymbol{Z}))=0$, we can deduce that actually $(\epsilon_X,\epsilon_Y)$ converges in distribution to a pair of independent standard normal random variables. Thus, we expect to observe loss of power for all our tests as the parameter $d$ starts growing.

The results obtained by the implemented tests are shown in \Cref{fig:my_label2}. As expected, \Cref{fig:my_label2} shows how the power for all tests decreases as the dimension $d$ increases. Also, we can observe that the $\GCM$ fails to reject the alternative for any $d\geq 2$, which is justified by the fact that $\E(\epsilon_X(\boldsymbol{Z})\epsilon_Y(\boldsymbol{Z}))=0$ for any $d\geq 2$. Finally, notice that the best results are attained by the \textbf{$\KGCM$-3} which uses a length-scale parameter computed using the median heuristic.
\begin{figure}[H]
    \centering
    \includegraphics[scale=0.35]{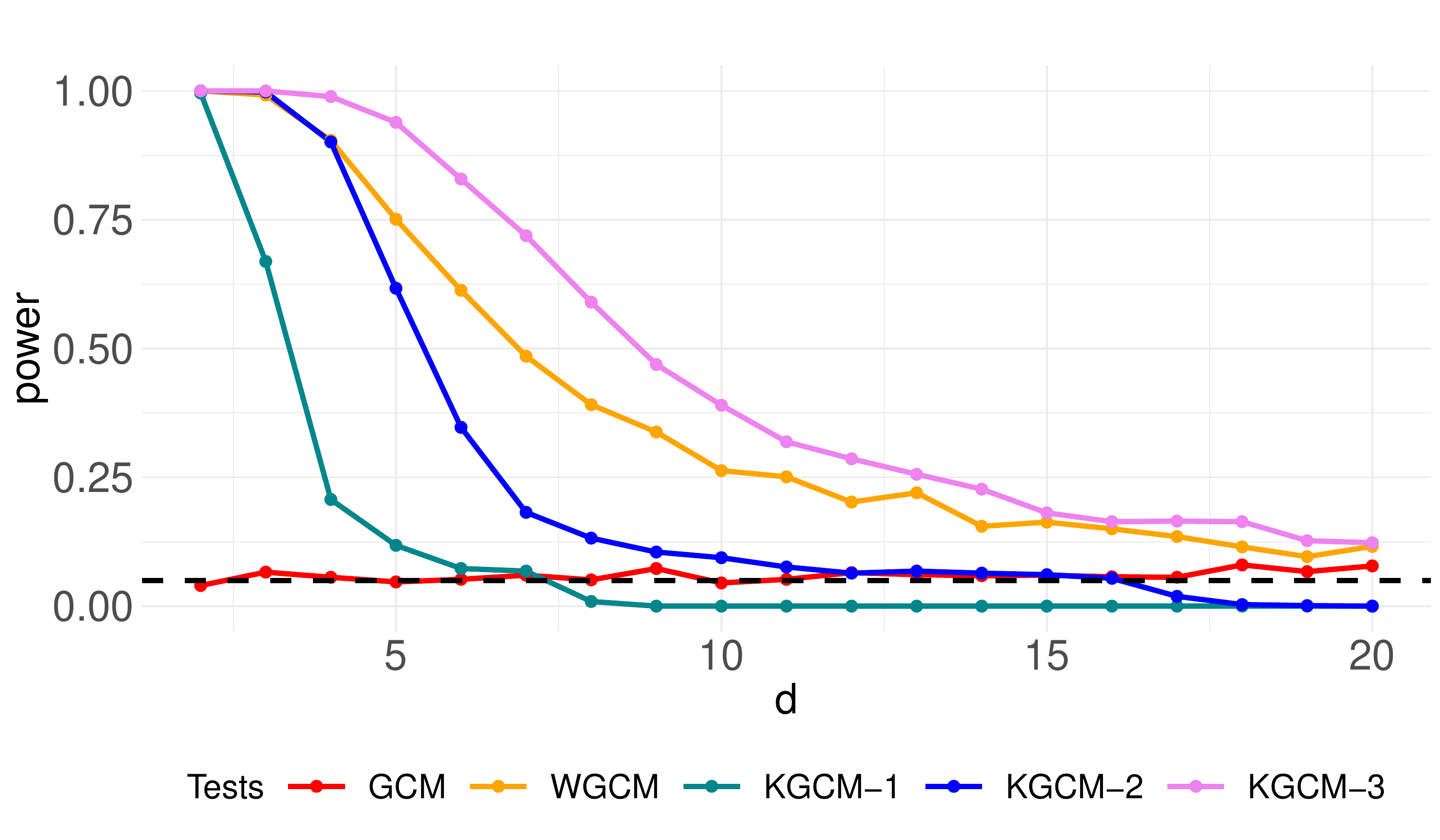}
    \caption{Test power attained by the tests for different values of the dimension $d$. The null hypothesis is recovered when $d\to\infty$.}
    \label{fig:my_label2}
\end{figure}

\subsubsection{Deferred proofs}\label{sec:deferedProofs}
We start with a general approximation result that holds under the null and the alternative hypotheses.

\begin{lemma}\label{Lemma:approx}
Suppose that \Cref{Cond:condIndep} holds. Then, for any $\omega$ in the unit ball of $\mathcal{H}$, it holds
\begin{align*}
S_n(\omega)&=\widetilde{S}_n(\omega)+o_p(1),\qquad\text{where}\qquad\widetilde{S}_n(\omega)=\frac{1}{\sqrt{n}}\sum_{i=1}^n \epsilon_{X_i}(Z_i)\epsilon_{Y_i}(Z_i)\omega(Z_i),
\end{align*}
where the $o_p(1)$ term does not depend on $\omega$.
\end{lemma}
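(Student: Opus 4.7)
The plan begins with the algebraic identity
\begin{align*}
\widehat\epsilon_{X_i}\widehat\epsilon_{Y_i} - \epsilon_{X_i}\epsilon_{Y_i} = (f-\widehat f)(Z_i)\,\epsilon_{Y_i} + \epsilon_{X_i}\,(g-\widehat g)(Z_i) + (f-\widehat f)(Z_i)(g-\widehat g)(Z_i),
\end{align*}
which decomposes $S_n(\omega) - \widetilde{S}_n(\omega) = T_1(\omega) + T_2(\omega) + T_3(\omega)$ in the obvious way. The boundedness of the kernel together with $\LH{\omega}=1$ yields the uniform envelope $|\omega(z)| = |\InerH{\omega}{K_z}| \leq \sqrt{K(z,z)} \leq \sqrt{C}$, so the task reduces to bounding each $T_j$ by an $\omega$-independent random variable that is $o_p(1)$.

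First I would dispatch the doubly-robust term $T_3$. Pulling out the envelope and applying Cauchy--Schwarz in the index $i$ gives $\sup_{\LH{\omega}=1}|T_3(\omega)| \leq \sqrt{C}\,\sqrt{n A_f A_g}$, and since \Cref{Cond:condIndep}.i furnishes $A_f A_g = o_p(n^{-1})$, the term $T_3$ is $o_p(1)$ uniformly in $\omega$.

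Next, the cross terms $T_1$ and $T_2$ are symmetric, so I would focus on $T_2$. Writing $T_2(\omega) = \InerH{\omega}{\zeta_n}$ with $\zeta_n = \tfrac{1}{\sqrt n}\sum_i \epsilon_{X_i}(g-\widehat g)(Z_i)\,K_{Z_i} \in \mathcal{H}$, the supremum over the unit ball equals $\LH{\zeta_n}$, which by the reproducing identity expands as
\begin{align*}
\LH{\zeta_n}^2 = \frac{1}{n}\sum_{i,j} \epsilon_{X_i}\epsilon_{X_j}\,(g-\widehat g)(Z_i)(g-\widehat g)(Z_j)\,K(Z_i,Z_j).
\end{align*}
Conditioning on $\mathcal F = \sigma((Y_i,Z_i)_{i=1}^n)$ (so that $\widehat g$ becomes $\mathcal F$-measurable while the $X_i$'s remain conditionally independent) and using \Cref{Cond:condIndep}.ii to bound $\Var(\epsilon_{X_i}\mid Z_i,Y_i) \leq \sup u_f$, the diagonal part of $\E(\LH{\zeta_n}^2\mid \mathcal F)$ is controlled by $C\cdot\sup u_f \cdot A_g = o_p(1)$. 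Under the null hypothesis, $X\perp Y\mid Z$ forces $h(z,y):=\E(\epsilon_X\mid Z=z,Y=y)\equiv 0$, the off-diagonal part vanishes, and a standard chaining argument (applying Chebyshev conditionally on $\mathcal F$ and then unconditionally) upgrades $\E(\LH{\zeta_n}^2\mid \mathcal F)=o_p(1)$ to $\LH{\zeta_n}=o_p(1)$; the same recipe handles $T_1$.

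The hard part will be the off-diagonal contribution under the alternative, where $h$ need not vanish and the crude Cauchy--Schwarz bound on $\tfrac{1}{\sqrt n}\sum_i \omega(Z_i)\,h(Z_i,Y_i)\,(g-\widehat g)(Z_i)$ delivers only $O_p(\sqrt{n A_g}) = o_p(n^{1/4})$ from the individual rate alone. Closing this last gap uniformly in $\omega$ will require extracting additional cancellation---for instance, a first-order optimality property of $\widehat g$ that makes $\widehat g - g$ effectively orthogonal to smooth RKHS weights in an appropriate $L^2$ sense, or a secondary DR-style decomposition against $h$ so that the product rate $A_f A_g = o_p(n^{-1})$ (rather than the individual $n^{-1/2}$ rate) ultimately governs this piece. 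Once that is in hand, assembling the three bounds produces a single $\omega$-free remainder $R_n = o_p(1)$ with $|S_n(\omega) - \widetilde S_n(\omega)| \leq R_n$ for every $\omega$ in the unit ball, which is the claim.
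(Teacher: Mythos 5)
Your decomposition is exactly the paper's (your $T_3$ is its $b$, your $T_1,T_2$ are its $\nu_f,\nu_g$), and your treatment of $T_3$ via the envelope $|\omega(z)|\le\sqrt C$, Cauchy--Schwarz, and $nA_fA_g=o_p(1)$ is identical. For the cross terms the paper argues in the same spirit but more directly: it conditions on $(\boldsymbol{Y},\boldsymbol{Z})$, asserts that $\nu_g$ is then a sum of independent \emph{zero-mean} terms, bounds $\E(\nu_g^2\mid\boldsymbol{Y},\boldsymbol{Z})\le C'C\,A_g=o_p(1)$ using the uniform bound on $u_f$, and converts this to $\nu_g=o_p(1)$ by a truncated Markov bound plus dominated convergence (no chaining is needed; your ``Chebyshev conditionally, then unconditionally'' is the right mechanism, just not chaining). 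Your reformulation $T_2(\omega)=\InerH{\omega}{\zeta_n}$ with the diagonal/off-diagonal split of $\E(\LH{\zeta_n}^2\mid\mathcal F)$ is the same computation in different clothes, with the advantage of making the uniformity in $\omega$ automatic rather than tracked through constants.

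The genuine gap is the one you flagged yourself: you do not close the off-diagonal term under the alternative, and the remedies you sketch (an orthogonality property of $\widehat g-g$ against RKHS weights, or a secondary doubly-robust decomposition against $h$) are not available from \Cref{Cond:condIndep} alone. But you should be aware that the paper's proof does not close it either: the assertion that the summands of $\nu_g$ are conditionally zero-mean given $(\boldsymbol{Y},\boldsymbol{Z})$ is precisely the statement $\E(\epsilon_X(Z)\mid Y,Z)=0$, i.e. $\E(X\mid Y,Z)=\E(X\mid Z)$, which holds under the null but is not implied by \Cref{Cond:condIndep} under a general alternative. So the obstruction you identified is real, and it is exactly the point the paper's proof of the ``holds under the alternative'' half of the lemma passes over. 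Under the null your argument is complete once you run the conditional Markov plus dominated convergence step; under the alternative, both your proof and the paper's need either an extra mean-independence assumption ($\E(X\mid Y,Z)=\E(X\mid Z)$ and symmetrically for $Y$) or a strengthened, $o_p(n^{-1})$-type rate for $A_f$ and $A_g$ individually.
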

The proof of \Cref{Lemma:approx} follows exactly the same steps of the proof of Theorem 6 of \cite{shah2020hardness}, with very minor modifications. Hence, the proof is omitted from the main text, but included in \Cref{sec:aux} for completeness.

\begin{proof}[{Proof of \Cref{thm:conditest}}]
Consider $\omega$ in the unit ball of $\mathcal{H}$. Then, under \Cref{Cond:condIndep}, \Cref{Lemma:approx} deduces $S_n(\omega)=\widetilde{S}_n(\omega)+o_p(1)$, where the $o_p(1)$ term does not depend on $\omega$. Thus, by Slutsky's theorem, $\Psi_n$ has the same limiting distribution as $\widetilde \Psi_n$, defined as
\begin{align*}
   \widetilde{\Psi}_n=\sup_{\omega\in\mathcal{H}:\LH{\omega}=1}\widetilde{S}_n(\omega)^2.
\end{align*}
We now focus on finding the limit distributing of $\widetilde \Psi_n$ under the null. For that, we will apply \Cref{thm:ConvergenceSumChiSq}, thus we proceed to verify \Cref{Cond:0bilinear,Cond:1bound,Cond:2Tail}. We begin by checking \Cref{Cond:0bilinear}. Note that $\widetilde{S}_n(\omega)$ is a sum of i.i.d. random variables. Thus, by the Central Limit Theorem, we obtain $\boldsymbol{\widetilde{S}_n}=(\widetilde{S}_n(\omega_1),\widetilde{S}_n(\omega_2),\ldots,\widetilde{S}_n(\omega_\ell))\overset{\dist}{\to}N_\ell(0,\boldsymbol{\Sigma})$ as $n\to\infty$, where $\boldsymbol{\Sigma}_{i,j} = \E\left(\omega_i(Z)\omega_j(Z)\epsilon_X(Z)^2\epsilon_Y(Z)^2\right)$ which is equal to $\sigma(\omega_i,\omega_j)$ as defined in \cref{eqn:Tsigma2}.

Note that the boundedness of the first and second moment required by the CLT follow from \Cref{Cond:condIndep}.

To check \Cref{Cond:2Tail}, consider a basis $(\phi_i)_{i\geq 1}$ of $\mathcal H$. Recall that $\sigma(\phi_k,\phi_k) = \int_{R^{d}} \phi_k(z)^2 \E(\epsilon_X^2|Z=z)\E(\epsilon_Y^2|Z=z)dF_Z(z)$, then $\sigma(\phi_k,\phi_k)\leq C\int_{\R^{d}} \phi_k(z)^2 dF_Z(z)$ for some constant $C>0$ since $\E(\epsilon_X^2|Z=z)$ and $\E(\epsilon_Y^2|Z=z)$ are uniformly bounded by \Cref{Cond:condIndep}. Therefore, \Cref{Cond:2Tail} follows from \Cref{lemma:twosampleTraceClass}. 

Finally, we verify \Cref{Cond:1bound}. Let $V_i$ be the span of $\phi_1,\ldots, \phi_i$, and recall that  $\|\widetilde{S}_nP_{V_i^\perp}\|^2_{\mathcal{H}\to\mathbb{R}}=\sum_{k=i+1}^\infty\widetilde{S}_n(\phi_k)^2$, then for any $\varepsilon>0$, we have
\begin{align}
\Prob\left(\|\widetilde{S}_nP_{V_i^\perp}\|^2_{\mathcal{H}\to\mathbb{R}}> \varepsilon\right)
&\leq\frac{1}{\varepsilon}\E\left(\sum_{k=i+1}^\infty\widetilde{S}_n(\phi_k)^2\right)=\frac{1}{\varepsilon}\sum_{k=i+1}^\infty\sigma(\phi_k,\phi_k).
\end{align}
The latter does not depend on $n$, and tends to $0$ when $i$ grows to infinity because \Cref{Cond:2Tail} holds, concluding that $\lim_{i\to \infty}\limsup_{n\to \infty}\Prob(\|\widetilde{S}_nP_{V_i^\perp}\|^2_{\mathcal{H}\to\mathbb{R}}> \varepsilon) = 0$ as desired.

By \Cref{thm:ConvergenceSumChiSq}, $\widetilde{\Psi}_n\to\sum_{i=1}^\infty\lambda_i\Q_i^2$, where $\Q_1,\Q_2,\ldots$ are i.i.d. standard normal random variables and $\lambda_1,\lambda_2,\ldots$ are the eigenvalues of the trace-class operator $T_\sigma:\mathcal{H}\to\mathcal{H}$ associated with $\sigma$.
\end{proof}

\begin{proof}[{Proof of \Cref{thm:ALteCLI}}]
We will start by proving that $\frac{1}{n}\Psi_n\overset{\Prob}{\to} (c^\star)^2$, where $c^\star\in \R$ will be identified later. Let $\widetilde S_n(\omega)$ as in \Cref{Lemma:approx}, and let $\widetilde \Psi_n = \sup_{\unitball}\widetilde S_n(\omega)^2$. By \Cref{Lemma:approx}, we have that $\frac{1}{n} \Psi_n = \frac{1}{n}\widetilde \Psi_n+o_p(1)$, so it is enough to show the result for $\frac{1}n\widetilde \Psi_n$. We will use \Cref{Thm:alternati} for such a task. 

Let $(X,Y,Z)\sim P$, then note that 
\begin{align*}
    \frac{1}{\sqrt{n}}\widetilde S_n(\omega)\overset{\Prob}{\to} c(\omega):=  \E(\epsilon_X(Z)\epsilon_Y(Z)\omega(Z))
\end{align*}
as $n$ grows to infinity. Note that the $c^\star := \sup_{\unitball}c(\omega)<\infty$ is bounded due to \Cref{Cond:condIndep}.ii. and  \Cref{Cond:condIndep}.iii.

We just need to show that for a base $(\phi_i)_{i \geq 1}$ of $\mathcal H$ we have 
\begin{align}
\lim_{i\to\infty} \limsup_{n\to \infty}\Prob\left(\sum_{k=i+1}^{\infty} \frac{1}{\sqrt n} \widetilde S_n(\phi_k)^2\geq \varepsilon\right) = 0.\label{eqn:random39318g1}    
\end{align}
Consider a base $(\phi_i)_{i\geq 1}$, then, $$\frac{1}{\sqrt n}\E(\widetilde S_n(\phi_i)^2) \leq \E\left(\phi_i(Z)^2 \epsilon_X(Z)^2\epsilon_Y(Z)^2\right) \leq C\E(\phi_i(Z)^2)$$
where the first inequality follow from straightforward computations for sums of i.i.d. 
random variables, and the second because \Cref{Cond:condIndep}.ii. Then, \eqref{eqn:random39318g1} follows from \Cref{lemma:twosampleTraceClass}. We conclude that \Cref{Thm:alternati} yields $    \frac{1}{n}\widetilde\Psi_n\overset{\Prob}{\to} (c^\star)^2= \sup_{\unitball} c(\omega)^2.$

Let's assume now that $\E(\epsilon_X(Z)\epsilon_Y(Z)|Z)\neq 0$ for $Z \in A$ with $\Prob(Z\in A)>0$, and consider the measure $\nu$ on $\R^{d}$ given by
$$\nu(B) = \E(\ind_{B}(Z)\epsilon_X(Z)\epsilon_Y(Z)),$$
for any Borel measurable set $B$ of $\R^{d}$. Note that by the previous assumption, $\nu$ is not the zero measure. Moreover, observe that $c(\omega) = \int \omega(z)\nu(dz)$. Then, since $\mathcal H$ is $c_0$-universal, the fact that $\nu$ is not the zero-measure implies that $c^\star>0$.
\end{proof}

\begin{proof}[{Proof of \Cref{thm:Wild}}]
Under \Cref{Cond:condIndep}, by using similar arguments as the ones used in the proof of \Cref{Lemma:approx}, we can prove that $S_n^W(\omega) = \widetilde S_n^W(\omega) + o_p(1)$, where $o_p(1)$ is independent of $\omega$ (since the proof is quite similar and does not add anything new to our analysis, we omit it). Therefore, define $\widetilde \Psi_n^W$ as
\begin{align*}
    \widetilde \Psi_n^W=\sup_{\unitball}\widetilde{S}_n^W(\omega).
\end{align*}
Then, by Slutsky's theorem, the limiting distribution of $\Psi_n^W$ and $\widetilde \Psi_n^W$ are the same (if one of them exist). We proceed to show the existence of a random variable $\Psi^W$ such that  $\widetilde \Psi_n^W\overset{\mathcal D_D}{\to} \Psi^W$ via \Cref{thm:ConvergenceSumChiSq} (recall the definition of $\overset{\mathcal D_D}{\to}$ from \cref{eqn:defDD}). For that we verify  \Cref{Cond:0bilinear,Cond:1bound,Cond:2Tail} conditioned on the data points. To check \Cref{Cond:0bilinear}, note that conditioned on the data, the test-statistic $\widetilde{S}_n^W(\omega)$ is just a sum of independent random variables where
\begin{align*}
    \E_D\left(\widetilde{S}_n^W(\omega)\right)=0\quad\text{and}\quad \Cov_D\left(\widetilde{S}_n^W(\omega),\widetilde{S}_n^W(\omega')\right)=\frac{1}{n}\sum_{i=1}^n\omega(Z_i)\omega'(Z_i)\epsilon_{X_i}(Z_i)^2\epsilon_{Y_i}(Z_i)^2.
\end{align*}
Then, by using the central limit theorem (e.g. Linderberg CLT) we obtain $\boldsymbol{\widetilde{S}}_n^W=(\widetilde{S}_n^W(\omega_1),\widetilde{S}_n^W(\omega_2),\ldots,\widetilde{S}_n^W(\omega_\ell))^\intercal\overset{\dist_D}{\to}N_\ell(0,\boldsymbol{\Sigma}^W)$, where
\begin{align*}
    \boldsymbol{\Sigma}_{i,j}^W = \Cov_D\left(\widetilde{S}_n^W(\omega_i),\widetilde{S}_n^W(\omega_j)\right) \overset{a.s.}{\to} \sigma^W(\omega_i,\omega_j),
\end{align*}
with $\sigma^W$ as defined in \cref{eqn:gamma}.

To check \Cref{Cond:2Tail,Cond:1bound}, choose a basis $(\phi_i)_{i\geq 1}$ of $\mathcal H$. For \Cref{Cond:2Tail} observe that  exists a constant $C>0$ such that $\sigma^W(\phi_i,\phi_i) = \int_{\R^{d}} \phi_i(z)^2\E(\epsilon_X^2\epsilon_Y^2|Z=z)dF_Z(z)\leq C\int_{\R^{d}} \phi_i(z)^2dF_Z(z)$ due to \Cref{Cond:condIndep}.ii. Then $\sum_{i} \sigma^W(\phi_i,\phi_i)\leq C\int \phi_i(z)^2dF_Z(z)$ which is finite by \Cref{lemma:twosampleTraceClass}, yielding \Cref{Cond:2Tail}. 

To verify \Cref{Cond:1bound}, we use that  $\sum_{k=i+1}^{\infty} \phi_k(x)^2 =\LH{P_{V_i^\perp}K_{x}}^2$ to get
\begin{align*}
\Prob_D\left(\|\widetilde{S}_n^WP_{V_i^\perp}\|^2_{\mathcal{H}\to\mathbb{R}}\geq \varepsilon\right)\leq\frac{1}{\varepsilon}\E_D\left(\sum_{k=i+1}^\infty\widetilde{S}_n^W(\phi_k)^2\right)&=\frac{1}{\varepsilon}\sum_{k=i+1}^\infty\left(\frac{1}{n}\sum_{j=1}^n\phi_k(Z_j)^2\epsilon_{X_j}(Z_j)^2\epsilon_{Y_j}(Z_j)^2\right)\\
&=\frac{\varepsilon^{-1}}{n}\sum_{j=1}^n\LH{P_{V_i^\perp}K_{Z_j}}^2\epsilon_{X_j}(Z_j)^2\epsilon_{Y_j}(Z_j)^2.
\end{align*}
Then, since all triples $(X_i,Y_i,Z_i)$ are independent, the law of large numbers yields
\begin{align}
    \limsup_{n\to\infty}\Prob_D(\|\widetilde{S}_n^WP_{V_i^\perp}\|^2_{\mathcal{H}\to\mathbb{R}}\geq \varepsilon)\leq \varepsilon^{-1}\int_{\R^{d}}\LH{P_{V_i^\perp}K_{z}}^2\E(\epsilon_X^2\epsilon_Y^2|Z=z)dF_Z(z).\label{eqqq}
\end{align}
Note that by \Cref{Cond:condIndep} we have $\E(\epsilon_X^2\epsilon_Y^2|Z=z)\leq C$ for some $C>0$, then, since $\LH{P_{V_i^\perp}K_{x}}^2=\sum_{k=i+1}^{\infty} \phi_k(x)^2 $,  \Cref{lemma:twosampleTraceClass} yields  that the right-hand side of  \cref{eqqq} tends to $0$ as $i$ grows to infinity.

 Since the conditions of \Cref{thm:ConvergenceSumChiSq} have been verified, we conclude that  $\widetilde{\Psi}_n^W\overset{\dist}{\to} \Psi^W:=\sum_{i=1}^\infty\lambda_i \Q_i^2$, where $\xi_1,\xi_2,\ldots$ are i.i.d. standard normal random variables and $\lambda_1,\lambda_2,\ldots$ are the eigenvalues associated to $T_{\sigma^W}$. Since we have proven that $\widetilde{\Psi}_n$ converges in distribution, then proof is finished. 
\end{proof}

\section{Conclusion}
We have introduced new tools to analyse the asymptotic behaviour of kernel-based tests. These tools give us necessary and sufficient conditions to extend asymptotic results for standard weighted test-statistics, to kernelised test-statistics, making the analysis of the kernel tests much simpler, cleaner, and shorter. The latter is a direct consequence of the fact that our analysis is carried out directly on random functionals on the Hilbert space, avoiding the intricate expansions that usually appear in the literature of kernel tests. 

To show the wide range of application of our results, we analysed two already known testing procedures, and we exhibit very short proofs of already known results via using our techniques. Additionally, we develop a new kernel-test for conditional independence (testing whether $X$ and $Y$ are independent given $Z$). This test was obtained as the kernelisation of the recently introduced generalised covariance measure. For this test, we present an asymptotic analysis using our developments. To study the practical behaviour of the new test, we perform experiments in two simulated data sets, showing that the kernelised test performs better than the generalised covariance measure and its weighted generalisation. We leave as future work a more detailed study of this new testing procedure, especially in the setting where the dimension of $X$ and $Y$ is greater than 1.
\paragraph{Acknowledgements}
The authors would like to thank Rolando Rebolledo for his comments, suggestions, and stimulating discussions.
T. Fern\'andez was supported by ANID FONDECYT grant No 11221143 and N. Rivera was supported by ANID FONDECYT grant No 3210805.
\bibliography{ref}

\providecommand{\Proceedings}{Proc.\xspace}
  \providecommand{\International}{Int'l\xspace}
\begin{thebibliography}{34}
\providecommand{\natexlab}[1]{#1}
\providecommand{\url}[1]{\texttt{#1}}
\expandafter\ifx\csname urlstyle\endcsname\relax
  \providecommand{\doi}[1]{doi: #1}\else
  \providecommand{\doi}{doi: \begingroup \urlstyle{rm}\Url}\fi

\bibitem[Albert et~al.(2022)Albert, Laurent, Marrel, and
  Meynaoui]{Albert2022Adaptive}
M{\'e}lisande Albert, B{\'e}atrice Laurent, Amandine Marrel, and Anouar
  Meynaoui.
\newblock {Adaptive test of independence based on HSIC measures}.
\newblock \emph{The Annals of Statistics}, 50\penalty0 (2):\penalty0 858 --
  879, 2022.
\newblock \doi{10.1214/21-AOS2129}.
\newblock URL \url{https://doi.org/10.1214/21-AOS2129}.

\bibitem[Billingsley(2013)]{billingsley2013convergence}
Patrick Billingsley.
\newblock \emph{Convergence of probability measures}.
\newblock John Wiley \& Sons, 2013.

\bibitem[Brendel et~al.(2014)Brendel, Janssen, Mayer, and
  Pauly]{brendel2014weighted}
Michael Brendel, Arnold Janssen, Claus-Dieter Mayer, and Markus Pauly.
\newblock Weighted logrank permutation tests for randomly right censored life
  science data.
\newblock \emph{Scandinavian Journal of Statistics}, 41\penalty0 (3):\penalty0
  742--761, 2014.

\bibitem[Chen and Markatou(2020)]{chen2020kernel}
Yang Chen and Marianthi Markatou.
\newblock Kernel tests for one, two, and k-sample goodness-of-fit: state of the
  art and implementation considerations.
\newblock \emph{Statistical Modeling in Biomedical Research}, pages 309--337,
  2020.

\bibitem[Chwialkowski et~al.(2014)Chwialkowski, Sejdinovic, and
  Gretton]{Chwialkowski2014Wild}
Kacper Chwialkowski, Dino Sejdinovic, and Arthur Gretton.
\newblock A wild bootstrap for degenerate kernel tests.
\newblock In \emph{Proceedings of the 27th International Conference on Neural
  Information Processing Systems - Volume 2}, NIPS'14, page 3608–3616,
  Cambridge, MA, USA, 2014. MIT Press.

\bibitem[Chwialkowski et~al.(2016)Chwialkowski, Strathmann, and
  Gretton]{chwialkowski2016kernel}
Kacper Chwialkowski, Heiko Strathmann, and Arthur Gretton.
\newblock A kernel test of goodness of fit.
\newblock In \emph{International conference on machine learning}, pages
  2606--2615. PMLR, 2016.

\bibitem[Ditzhaus and Pauly(2019)]{Ditzhaus2019Wild}
Marc Ditzhaus and Markus Pauly.
\newblock Wild bootstrap logrank tests with broader power functions for testing
  superiority.
\newblock \emph{Computational Statistics \& Data Analysis}, 136:\penalty0
  1--11, 2019.
\newblock ISSN 0167-9473.
\newblock \doi{https://doi.org/10.1016/j.csda.2019.02.001}.
\newblock URL
  \url{https://www.sciencedirect.com/science/article/pii/S0167947319300362}.

\bibitem[Ditzhaus et~al.(2022)Ditzhaus, Fern{\'a}ndez, and
  Rivera]{ditzhaus2022multiple}
Marc Ditzhaus, Tamara Fern{\'a}ndez, and Nicol{\'a}s Rivera.
\newblock A multiple kernel testing procedure for non-proportional hazards in
  factorial designs.
\newblock \emph{arXiv preprint arXiv:2206.07239}, 2022.

\bibitem[Doran et~al.(2014)Doran, Muandet, Zhang, and
  Sch{\"o}lkopf]{doran2014permutation}
Gary Doran, Krikamol Muandet, Kun Zhang, and Bernhard Sch{\"o}lkopf.
\newblock A permutation-based kernel conditional independence test.
\newblock In \emph{UAI}, pages 132--141. Citeseer, 2014.

\bibitem[Duvenaud(2014)]{duvenaud2014automatic}
David Duvenaud.
\newblock \emph{Automatic model construction with Gaussian processes}.
\newblock PhD thesis, University of Cambridge, 2014.

\bibitem[Fern{\'a}ndez and Rivera(2021)]{fernandez2021reproducing}
Tamara Fern{\'a}ndez and Nicol{\'a}s Rivera.
\newblock A reproducing kernel hilbert space log-rank test for the two-sample
  problem.
\newblock \emph{Scandinavian Journal of Statistics}, 48\penalty0 (4):\penalty0
  1384--1432, 2021.

\bibitem[Fernandez et~al.(2020)Fernandez, Rivera, Xu, and
  Gretton]{fernandez2020kernelized}
Tamara Fernandez, Nicolas Rivera, Wenkai Xu, and Arthur Gretton.
\newblock Kernelized stein discrepancy tests of goodness-of-fit for
  time-to-event data.
\newblock In \emph{International Conference on Machine Learning}, pages
  3112--3122. PMLR, 2020.

\bibitem[Fern{\'a}ndez et~al.(2021)Fern{\'a}ndez, Gretton, Rindt, and
  Sejdinovic]{fernandez2021kernel}
Tamara Fern{\'a}ndez, Arthur Gretton, David Rindt, and Dino Sejdinovic.
\newblock A kernel log-rank test of independence for right-censored data.
\newblock \emph{Journal of the American Statistical Association}, pages 1--12,
  2021.

\bibitem[Fleming and Harrington(1991)]{Flemming91Counting}
Thomas~R. Fleming and David~P. Harrington.
\newblock \emph{Counting processes and survival analysis}.
\newblock Wiley Series in Probability and Mathematical Statistics: Applied
  Probability and Statistics. John Wiley \& Sons, Inc., New York, 1991.
\newblock ISBN 0-471-52218-X.

\bibitem[Gill(1980)]{gill1980censoring}
R.~D. Gill.
\newblock \emph{Censoring and stochastic integrals}, volume 124 of
  \emph{Mathematical Centre Tracts}.
\newblock Mathematisch Centrum, Amsterdam, 1980.
\newblock ISBN 90-6196-197-1.

\bibitem[Gretton et~al.(2005)Gretton, Bousquet, Smola, and
  Sch{\"o}lkopf]{gretton2005measuring}
Arthur Gretton, Olivier Bousquet, Alex Smola, and Bernhard Sch{\"o}lkopf.
\newblock Measuring statistical dependence with hilbert-schmidt norms.
\newblock In \emph{International conference on algorithmic learning theory},
  pages 63--77. Springer, 2005.

\bibitem[Gretton et~al.(2006)Gretton, Borgwardt, Rasch, Sch{\"o}lkopf, and
  Smola]{gretton2006kernel}
Arthur Gretton, Karsten Borgwardt, Malte Rasch, Bernhard Sch{\"o}lkopf, and
  Alex Smola.
\newblock A kernel method for the two-sample-problem.
\newblock \emph{Advances in neural information processing systems}, 19, 2006.

\bibitem[Gretton et~al.(2007)Gretton, Fukumizu, Teo, Song, Sch{\"o}lkopf, and
  Smola]{gretton2007kernel}
Arthur Gretton, Kenji Fukumizu, Choon Teo, Le~Song, Bernhard Sch{\"o}lkopf, and
  Alex Smola.
\newblock A kernel statistical test of independence.
\newblock \emph{Advances in neural information processing systems}, 20, 2007.

\bibitem[Gretton et~al.(2009)Gretton, Fukumizu, Harchaoui, and
  Sriperumbudur]{gretton2009Fast}
Arthur Gretton, Kenji Fukumizu, Za\"{\i}d Harchaoui, and Bharath~K.
  Sriperumbudur.
\newblock A fast, consistent kernel two-sample test.
\newblock In \emph{Advances in Neural Information Processing Systems},
  volume~22. Curran Associates, Inc., 2009.
\newblock URL
  \url{https://proceedings.neurips.cc/paper/2009/file/9246444d94f081e3549803b928260f56-Paper.pdf}.

\bibitem[Gretton et~al.(2012)Gretton, Borgwardt, Rasch, Sch{\"o}lkopf, and
  Smola]{gretton2012kernel}
Arthur Gretton, Karsten~M Borgwardt, Malte~J Rasch, Bernhard Sch{\"o}lkopf, and
  Alexander Smola.
\newblock A kernel two-sample test.
\newblock \emph{The Journal of Machine Learning Research}, 13\penalty0
  (1):\penalty0 723--773, 2012.

\bibitem[Hofmann et~al.(2008)Hofmann, Sch{\"o}lkopf, and
  Smola]{hofmann2008kernel}
Thomas Hofmann, Bernhard Sch{\"o}lkopf, and Alexander~J Smola.
\newblock Kernel methods in machine learning.
\newblock \emph{The annals of statistics}, 36\penalty0 (3):\penalty0
  1171--1220, 2008.

\bibitem[Key et~al.(2021)Key, Fernandez, Gretton, and Briol]{key2021composite}
Oscar Key, Tamara Fernandez, Arthur Gretton, and Fran{\c{c}}ois-Xavier Briol.
\newblock Composite goodness-of-fit tests with kernels.
\newblock \emph{arXiv preprint arXiv:2111.10275}, 2021.

\bibitem[Liu et~al.(2016)Liu, Lee, and Jordan]{liu2016kernelized}
Qiang Liu, Jason Lee, and Michael Jordan.
\newblock A kernelized stein discrepancy for goodness-of-fit tests.
\newblock In \emph{International conference on machine learning}, pages
  276--284. PMLR, 2016.

\bibitem[Muandet et~al.(2017)Muandet, Fukumizu, Sriperumbudur, Sch{\"o}lkopf,
  et~al.]{muandet2017kernel}
Krikamol Muandet, Kenji Fukumizu, Bharath Sriperumbudur, Bernhard
  Sch{\"o}lkopf, et~al.
\newblock Kernel mean embedding of distributions: A review and beyond.
\newblock \emph{Foundations and Trends{\textregistered} in Machine Learning},
  10\penalty0 (1-2):\penalty0 1--141, 2017.

\bibitem[Scheidegger et~al.(2021)Scheidegger, H{\"o}rrmann, and
  B{\"u}hlmann]{scheidegger2021weighted}
Cyrill Scheidegger, Julia H{\"o}rrmann, and Peter B{\"u}hlmann.
\newblock The weighted generalised covariance measure.
\newblock \emph{arXiv preprint arXiv:2111.04361}, 2021.

\bibitem[Schrab et~al.(2021)Schrab, Kim, Albert, Laurent, Guedj, and
  Gretton]{schrab2021mmd}
Antonin Schrab, Ilmun Kim, M{\'e}lisande Albert, B{\'e}atrice Laurent, Benjamin
  Guedj, and Arthur Gretton.
\newblock Mmd aggregated two-sample test.
\newblock \emph{arXiv preprint arXiv:2110.15073}, 2021.

\bibitem[Schrab et~al.(2022{\natexlab{a}})Schrab, Guedj, and
  Gretton]{Schrab2022KSD}
Antonin Schrab, Benjamin Guedj, and Arthur Gretton.
\newblock Ksd aggregated goodness-of-fit test, 2022{\natexlab{a}}.
\newblock URL \url{https://arxiv.org/abs/2202.00824}.

\bibitem[Schrab et~al.(2022{\natexlab{b}})Schrab, Kim, Guedj, and
  Gretton]{schrab2022efficient}
Antonin Schrab, Ilmun Kim, Benjamin Guedj, and Arthur Gretton.
\newblock Efficient aggregated kernel tests using incomplete $ u $-statistics.
\newblock \emph{arXiv preprint arXiv:2206.09194}, 2022{\natexlab{b}}.

\bibitem[Shah and Peters(2020)]{shah2020hardness}
Rajen~D Shah and Jonas Peters.
\newblock The hardness of conditional independence testing and the generalised
  covariance measure.
\newblock \emph{The Annals of Statistics}, 48\penalty0 (3):\penalty0
  1514--1538, 2020.

\bibitem[Smola et~al.(2007)Smola, Gretton, Song, and
  Sch{\"o}lkopf]{smola2007hilbert}
Alex Smola, Arthur Gretton, Le~Song, and Bernhard Sch{\"o}lkopf.
\newblock A hilbert space embedding for distributions.
\newblock In \emph{International Conference on Algorithmic Learning Theory},
  pages 13--31. Springer, 2007.

\bibitem[Sriperumbudur et~al.(2011)Sriperumbudur, Fukumizu, and
  Lanckriet]{sriperumbudur2011universality}
Bharath~K Sriperumbudur, Kenji Fukumizu, and Gert~RG Lanckriet.
\newblock Universality, characteristic kernels and rkhs embedding of measures.
\newblock \emph{Journal of Machine Learning Research}, 12\penalty0 (7), 2011.

\bibitem[Xu and Reinert(2021)]{xu2021stein}
Wenkai Xu and Gesine Reinert.
\newblock A stein goodness-of-test for exponential random graph models.
\newblock In \emph{International Conference on Artificial Intelligence and
  Statistics}, pages 415--423. PMLR, 2021.

\bibitem[Yang et~al.(2019)Yang, Rao, and Neville]{yang2019stein}
Jiasen Yang, Vinayak Rao, and Jennifer Neville.
\newblock A stein--papangelou goodness-of-fit test for point processes.
\newblock In \emph{The 22nd International Conference on Artificial Intelligence
  and Statistics}, pages 226--235. PMLR, 2019.

\bibitem[Zhang et~al.(2012)Zhang, Peters, Janzing, and
  Sch{\"o}lkopf]{zhang2012kernel}
Kun Zhang, Jonas Peters, Dominik Janzing, and Bernhard Sch{\"o}lkopf.
\newblock Kernel-based conditional independence test and application in causal
  discovery.
\newblock \emph{arXiv preprint arXiv:1202.3775}, 2012.

\end{thebibliography}

\appendix
\section{Auxiliary results}\label{sec:aux}
\begin{proposition}\label{prop:G2general} Under \Cref{Cond:0bilinear,Cond:2Tail}, if \Cref{Cond:1bound} holds for one orthonormal basis $(\phi_i)_{i\geq 1}$ of $\mathcal H$, then it holds for every orthonormal basis
\end{proposition}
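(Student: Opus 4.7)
The plan is to prove this directly via a finite-rank decomposition, using tightness coming from \Cref{Cond:0bilinear}. Let $(\psi_j)_{j\geq 1}$ be any other orthonormal basis of $\mathcal H$, and set $V_i=\mathrm{span}(\phi_1,\ldots,\phi_i)$ and $W_j=\mathrm{span}(\psi_1,\ldots,\psi_j)$. The goal is to show that, for every $\varepsilon>0$, $\lim_{j\to\infty}\limsup_{n\to\infty}\Prob(\|S_n\circ P_{W_j^\perp}\|^2_{\mathcal H\to\R}\geq\varepsilon)=0$.

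First I would split the functional using $I=P_{V_i}+P_{V_i^\perp}$. Together with $\|P_{W_j^\perp}\|_{\mathcal H\to\mathcal H}\leq 1$, this yields the operator-norm bound
\begin{align*}
\|S_n\circ P_{W_j^\perp}\|^2_{\mathcal H\to\R}\leq 2\|S_n\circ P_{V_i}\circ P_{W_j^\perp}\|^2_{\mathcal H\to\R} + 2\|S_n\circ P_{V_i^\perp}\|^2_{\mathcal H\to\R}.
\end{align*}
The second term is handled directly by \Cref{Cond:1bound} applied to the basis $(\phi_i)$ for large $i$. For the first term, since $P_{V_i}$ has range in the finite-dimensional $V_i$, expanding $S_n(P_{V_i}P_{W_j^\perp}\omega)=\sum_{k=1}^i \InerH{\omega}{P_{W_j^\perp}\phi_k} S_n(\phi_k)$ and applying Cauchy--Schwarz (first in $\R^i$, then in $\mathcal H$) gives
\begin{align*}
\|S_n\circ P_{V_i}\circ P_{W_j^\perp}\|^2_{\mathcal H\to\R}\leq \left(\sum_{k=1}^i \LH{P_{W_j^\perp}\phi_k}^2\right)\left(\sum_{k=1}^i S_n(\phi_k)^2\right).
\end{align*}

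The crucial observations are that, for each fixed $i$, the first factor on the right is deterministic and tends to $0$ as $j\to\infty$ by Parseval (applied to each $\phi_k$ in the basis $(\psi_j)$), while the second factor is tight in $n$ since \Cref{Cond:0bilinear} implies that $(S_n(\phi_1),\ldots,S_n(\phi_i))$ converges in distribution to a centred Gaussian vector, and hence is tight.

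Putting the pieces together, given $\varepsilon,\delta>0$ I would first use \Cref{Cond:1bound} to choose $i$ with $\limsup_n\Prob(\|S_n\circ P_{V_i^\perp}\|^2_{\mathcal H\to\R}\geq\varepsilon/4)<\delta$; with $i$ now fixed, pick $M>0$ via tightness so that $\limsup_n\Prob(\sum_{k=1}^i S_n(\phi_k)^2\geq M)<\delta$; and finally pick $j$ so large that $\sum_{k=1}^i\LH{P_{W_j^\perp}\phi_k}^2<\varepsilon/(4M)$. A union bound then gives $\limsup_n\Prob(\|S_n\circ P_{W_j^\perp}\|^2_{\mathcal H\to\R}\geq\varepsilon)\leq 2\delta$, and letting $\delta\to 0$ finishes the argument. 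The main obstacle is simply keeping the order of quantifiers straight ($i$ must be fixed before $j$ is chosen, and $M$ depends on $i$ but not on $n$); note that \Cref{Cond:2Tail} is not actually used in this argument, it enters only implicitly through the existence of the bilinear form $\sigma$ in \Cref{Cond:0bilinear}.
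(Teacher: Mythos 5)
Your proof is correct, and it takes a genuinely different route from the paper's. The paper first establishes a global tail bound $\limsup_{n\to\infty}\Prob\left(\|S_n\|_{\mathcal H\to\R}>\delta^{-1}\right)\leq C\delta$ with $C=4\sum_{j\geq1}\sigma(\phi_j,\phi_j)$ (this is precisely where \Cref{Cond:2Tail} is used), and then, for the new basis $(\psi_j)_{j\geq1}$, approximates $\phi_1,\ldots,\phi_i$ by an orthonormal family inside the span of $\psi_1,\ldots,\psi_{t(i)}$ to within $\delta$ in norm, shows $\|P_{V_i}-P_{\widetilde V_i}\|_{\mathcal H\to\mathcal H}\leq 2\delta$, and absorbs the error via $\|S_n\circ P_{U_{t(i)}^{\perp}}\|_{\mathcal H\to\R}\leq\|S_n\circ P_{V_i^{\perp}}\|_{\mathcal H\to\R}+2\delta\|S_n\|_{\mathcal H\to\R}$. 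You avoid the full operator norm of $S_n$ entirely: your splitting through $I=P_{V_i}+P_{V_i^{\perp}}$ reduces the new-basis tail to the finite-dimensional marginal $(S_n(\phi_1),\ldots,S_n(\phi_i))$, whose tightness is free from \Cref{Cond:0bilinear} via Cram\'er--Wold, combined with the deterministic Parseval estimate $\sum_{k=1}^{i}\LH{P_{W_j^\perp}\phi_k}^2\to0$ as $j\to\infty$. Your quantifier bookkeeping ($i$ first, then $M$, then $j$) is sound, and since $j\mapsto\limsup_n\Prob(\|S_n\circ P_{W_j^\perp}\|^2_{\mathcal H\to\R}\geq\varepsilon)$ is nonincreasing the outer limit exists, so bounding it by $2\delta$ for arbitrary $\delta$ finishes the argument. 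Your closing observation is also accurate: \Cref{Cond:2Tail} is never invoked in your argument, so the basis-independence of \Cref{Cond:1bound} in fact holds under \Cref{Cond:0bilinear} alone, whereas the paper needs the trace condition to make its constant $C$ finite. What the paper's route buys in exchange is the standalone stochastic-boundedness estimate for $\|S_n\|_{\mathcal H\to\R}$; what yours buys is a more elementary proof under strictly weaker hypotheses.
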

\begin{proof}
We will first verify that it exists a constant $C>0$ such that for any $\delta \in (0,1)$ we have that
\begin{align}
   \limsup_{n\to \infty} \Prob(\|S_n\|_{\mathcal H\to \R}>\frac{1}{\delta})\leq C\delta. \label{eqn:random55j39h2}
\end{align}
For that, let $V_i$ be the span of $\phi_1,\ldots, \phi_i$, and write 
\begin{align}
   \limsup_{n\to \infty} \Prob(\|S_n\|_{\mathcal H\to \R}>\delta^{-1})&\leq 
   \lim_{i\to \infty}\limsup_{n\to \infty} \Prob( \|S_n \circ P_{V_i^{\intercal}}\|_{\mathcal H\to \R}^2 \geq (2\delta)^{-2})+\Prob(\|S_n \circ P_{V_i}\|_{\mathcal H\to \R}^2 \geq (2\delta)^{-2})\nonumber\\
   & = 0+  \lim_{i\to \infty}\limsup_{n\to \infty}\Prob(\|S_n \circ P_{V_i}\|_{\mathcal H\to \R}^2 \geq (2\delta)^{-2})\label{eqn:randomdijv19},
\end{align}
where the equality follows as \Cref{Cond:1bound} holds for the orthonormal basis $(\phi_i)_{i\geq 1}$. For the remaining term, we have $\|S_n \circ P_{V_i}\|_{\mathcal H\to \R}^2 = \sum_{j=1}^i S_n(\phi_i)^2$. By \Cref{Cond:0bilinear} we have that $(S_n(\phi_1),\ldots, S_n(\phi_i))$ converges to a random vector $(Z_1,\ldots, Z_i)$ of normal random variables with mean 0 and covariance matrix $\Sigma_{jk} = \sigma(\phi_j,\phi_k)$. Then, by the Markov inequality we have
\begin{align*}
    \limsup_{n\to \infty}\Prob\left(\|S_n \circ P_{V_i}\|_{\mathcal H\to \R}^2 \geq (2\delta)^{-2}\right)&= \Prob\left(\sum_{j=1}^i Z_i^2\geq (2\delta)^{-2}\right)
     \leq  4\delta^2\sum_{j=1}^i \sigma(\phi_j,\phi_j)\leq 4\delta\sum_{j=1}^{\infty} \sigma(\phi_j,\phi_j).
\end{align*}
Choose $C = 4\sum_{j=1}^{\infty}\sigma(\phi_j,\phi_j)$, then from \cref{eqn:randomdijv19} get $  \limsup_{n\to \infty} \Prob(\|S_n\|_{\mathcal H\to \R}>\delta^{-1})\leq \delta C$, and recall that $C<\infty$ due to \Cref{Cond:2Tail}.

We proceed to prove the main statement of the proposition. Consider any orthonormal basis $(\psi)_{i\geq 1}$ of $\mathcal H$ and fix $\varepsilon>0$. For any fixed $i\in \N$, let $V_i$ be the span of $\phi_1,\ldots, \phi_i$, and $U_i$ be the span of $\psi_1,\ldots, \psi_i$. Then, for any $\delta\in (0,1)$ we have that for any given $i\geq 1$, it exists $t(i)$ such that the following holds:
$$\text{for every $j\leq i$, it exists $\tilde \phi_j \in U_{t(i)}$ with
$\LH{\phi_j-\tilde \phi_j} \leq \delta$.}$$ Moreover, we can choose the vectors $\tilde \phi_1,\ldots, \tilde \phi_i$ to be orthonormal (we can just choose $t(i)$ large enough so we can approximate the first $i$ basis vectors $\phi_j$ with arbitrary precision). For sake of the argument, we choose $t(i)$ in such a way that $t(i)<t(i+1)$, so it is clear that $t(i)\to \infty$ as $i$ grows to infinity.

Now, denote by $\widetilde V_i$ the span of $\tilde \phi_1,\ldots, \tilde \phi_i$, then since $\widetilde V_i$ is a subspace of $U_{t(i)}$
we have
\begin{align}
    \|S_n \circ P_{U_{t(i)}^{\intercal}}\|_{\mathcal H \to \R} \leq  \|S_n \circ P_{\widetilde V_i^{\intercal}}\|_{\mathcal H \to \R} = \|S_n \circ P_{ V_i^{\intercal}}\|_{\mathcal H \to \R}+ \|S_n\|_{\mathcal H\to \R} \| P_{V_i}-P_{\widetilde V_i}\|_{\mathcal H \to \mathcal H}\label{eqn:randomvuru28416}
\end{align}
For the first term of the right-hand side we have that 
$\lim_{i\to \infty}\limsup_{n\to \infty}\Prob(\|S_n \circ P_{ V_i^{\intercal}}\|_{\mathcal H \to \R}\geq \varepsilon/2) = 0, $
since \Cref{Cond:1bound} holds for the orthonormal basis $(\phi_i)_{i\geq 1}$. For the second term on the right-hand side of \cref{eqn:randomvuru28416}, we first need to argue that $\| P_{V_i}-P_{\widetilde V_i}\|_{\mathcal H \to \mathcal H}\leq 2\delta$ . We start by noting that
\begin{align}
    \| (P_{V_i}-P_{\widetilde V_i})\|_{\mathcal H \to \mathcal H} = \| (I-P_{\widetilde V_i})P_{V_i})\|_{\mathcal H \to \mathcal H}+\| P_{\widetilde V_i}(I-P_{V_i}))\|_{\mathcal H \to \mathcal H}.
\end{align}
We will prove that both terms in the right-hand side of the equation above are smaller than $\delta$. We only do this for the first one since $\| P_{\widetilde V_i}(I-P_{V_i})\|_{\mathcal H \to \mathcal H} = \|(I-P_{V_i})P_{\widetilde V_i}\|_{\mathcal H \to \mathcal H}$, thus the same argument will work for both terms. Now, note that \begin{align}
      \| (I-P_{\widetilde V_i})P_{V_i})\|_{\mathcal H \to \mathcal H}^2 
      &= \sup_{\unitball}\sum_{j=1}^{i} \InerH{\omega}{\phi_j}^2\LH{(I-P_{\widetilde V_i})\phi_j}^2\nonumber 
\end{align}
but $\LH{(I-P_{\widetilde V_i})\phi_j}^2 $ is smaller than $\delta^2$ since $\tilde \phi_j$ in $\widetilde V_i$ is such that $\LH{\phi_j-\widetilde \phi_j}<\delta$. We deduce then that
$\| (I-P_{\widetilde V_i})P_{V_i})\|_{\mathcal H \to \mathcal H}^2 \leq \sup_{w\in V_i: \LH{\omega}=1}\sum_{j=1}^{i} \InerH{\omega}{\phi_j}^2\delta^2 \leq \delta^2,$ concluding that $\| P_{V_i}-P_{\widetilde V_i}\|_{\mathcal H \to \mathcal H}\leq 2\delta $ for any $i\geq 1$. The previous bound, together with \cref{eqn:random55j39h2}, yields
\begin{align*}
    \limsup_{n\to \infty}\Prob\left(\|S_n\|_{\mathcal H\to \R} \| P_{V_i}-P_{\widetilde V_i}\|_{\mathcal H \to \mathcal H}>\frac \varepsilon 2\right) \leq \limsup_{n\to \infty}\Prob\left(\|S_n\|_{\mathcal H\to \R}  >\frac{\varepsilon}{4\delta}\right)\leq \frac{4 C\delta}{\varepsilon}.
\end{align*}
Therefore $\lim_{i\to \infty}\limsup_{n\to \infty}  \Prob\left(\|S_n\|_{\mathcal H\to \R} \| P_{V_i}-P_{\widetilde V_i}\|_{\mathcal H \to \mathcal H}>\varepsilon/2\right) \leq \frac{4C\delta }{\varepsilon}$. From \cref{eqn:randomvuru28416} we deduce that 
$ \lim_{i\to \infty}\limsup_{n\to \infty}\Prob(\|S_n \circ P_{U_{t(i)}^{\intercal}}\|_{\mathcal H \to \R}\geq \varepsilon)\leq \frac{4\delta C}{\varepsilon}.$ To conclude, the previous inequality holds for the whole subsequence $(t(i): i\geq 1)$, instead of the whole sequence $(i:i\geq 1)$, however, as $\limsup_{n\to \infty}\Prob(\|S_n \circ P_{U_{i}^{\intercal}}\|_{\mathcal H \to \R}\geq \varepsilon)$ is decreasing in $i$, then limit exists, and thus $\lim_{i\to \infty} \limsup_{n\to \infty}\Prob(\|S_n \circ P_{U_{i}^{\intercal}}\|_{\mathcal H \to \R}\geq \varepsilon)\leq 4C\delta/\varepsilon$. Finally, since $\delta\in (0,1)$ is arbitrary the limit is  0.

\end{proof}

\begin{lemma}\label{lemma:lambda3ST}
Let $(\lambda_i)_{i\geq 1}$ be a sequence of non-negative real numbers and let $(Z_i)_{i\geq1}$ be a collection of i.i.d. standard normal random variables. Then $\sum_{i\geq1} \lambda_i < \infty$ if and only if $\sum_{i\geq1}\lambda_i Z_i^2$ converges almost surely to a random variable.
\end{lemma}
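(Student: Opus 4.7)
The plan is to split into the two implications and handle them by elementary integration arguments, with the Laplace transform doing the heavy lifting in the harder direction.

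For the easy direction, assume $\sum_{i\geq 1}\lambda_i<\infty$. The partial sums $S_n=\sum_{i=1}^n \lambda_i Z_i^2$ form a nondecreasing sequence of non-negative random variables, so they converge a.s.\ to a limit $S_\infty \in [0,\infty]$. Monotone convergence gives $\E[S_\infty]=\sum_{i\geq 1}\lambda_i \E[Z_i^2]=\sum_{i\geq 1}\lambda_i<\infty$, hence $S_\infty<\infty$ a.s., which is exactly a.s.\ convergence to a real-valued random variable.

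For the converse, assume that $S_n$ converges a.s.\ to some finite random variable $S$. I will work with the Laplace transform. For each $t>0$, by independence,
$$\E[\exp(-tS_n)]=\prod_{i=1}^n \E[\exp(-t\lambda_i Z_i^2)] = \prod_{i=1}^n (1+2t\lambda_i)^{-1/2}.$$
Since $0\le e^{-tS_n}\le 1$ and $e^{-tS_n}\to e^{-tS}$ a.s., dominated convergence yields $\E[e^{-tS_n}]\to \E[e^{-tS}]$. Because $S<\infty$ a.s., we have $e^{-tS}>0$ a.s., so $\E[e^{-tS}]>0$, and therefore the infinite product $\prod_{i\geq 1}(1+2t\lambda_i)^{-1/2}$ is strictly positive. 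Taking logarithms, this is equivalent to $\sum_{i\geq 1}\log(1+2t\lambda_i)<\infty$ for every $t>0$.

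It remains to deduce $\sum_{i\geq 1}\lambda_i<\infty$ from the convergence of $\sum_{i\geq 1}\log(1+2t\lambda_i)$. First, convergence forces $\log(1+2t\lambda_i)\to 0$, so $\lambda_i\to 0$; in particular, $2t\lambda_i\le 1$ for all sufficiently large $i$. For such $i$, the elementary inequality $\log(1+x)\ge x/2$ on $x\in[0,1]$ gives $\log(1+2t\lambda_i)\ge t\lambda_i$, so the tail of $\sum \lambda_i$ is bounded by a constant times a convergent tail, proving $\sum_{i\geq 1}\lambda_i<\infty$.

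The only place that needs minor care is the last step: ensuring that the ``$\lambda_i\to 0$ eventually'' argument correctly handles the finitely many initial large $\lambda_i$ (which contribute finitely and can be ignored) and that dominated convergence applies on the probability side. Neither is a real obstacle, so the whole proof is short; the main conceptual content is the observation that a.s.\ finiteness of $S$ forces the Laplace transform to stay bounded away from zero, which is what links probabilistic convergence to summability of $\{\lambda_i\}$.
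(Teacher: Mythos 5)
Your proof is correct, and it takes a genuinely different route from the paper in both directions. For the forward implication the paper invokes Kolmogorov's two-series theorem (checking $\sum_i \E(\lambda_i Z_i^2)<\infty$ and $\sum_i \Var(\lambda_i Z_i^2)<\infty$), whereas you exploit non-negativity directly: the partial sums are monotone, so they converge a.s.\ in $[0,\infty]$, and monotone convergence plus $\E(S_\infty)=\sum_i\lambda_i<\infty$ rules out the value $+\infty$. This is shorter and uses strictly less machinery. For the converse the paper applies Kolmogorov's three-series theorem, first deducing boundedness of $(\lambda_i)$ from the tail-probability condition and then summability from the truncated-mean condition; you instead compute the Laplace transform $\E(e^{-tS_n})=\prod_{i=1}^n(1+2t\lambda_i)^{-1/2}$, pass to the limit by dominated convergence, and observe that a.s.\ finiteness of $S$ forces $\E(e^{-tS})>0$, hence $\sum_i\log(1+2t\lambda_i)<\infty$, which yields $\sum_i\lambda_i<\infty$ via $\log(1+x)\geq x/2$ on $[0,1]$. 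Your Laplace-transform argument is self-contained and leans only on independence and the explicit Gaussian moment generating function, while the paper's three-series route is more generic (it would survive replacing $Z_i^2$ by other i.i.d.\ non-negative variables whose transform is not explicit). All the steps you flag as needing care — the finitely many initial large $\lambda_i$, and dominated convergence with the constant dominating function $1$ — are indeed unproblematic.
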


\begin{proof}

$(\Longrightarrow)$ We will prove that $\sum_{i=1}\lambda_i<\infty$ implies that the random series $\sum_{i=1}^n \lambda_iZ_i^2$ converges almost surely.  To do this, we verify the two conditions of the Kolmogorov's two series theorem. We first need to verify that $\sum_{i=1}^{\infty}\E(\lambda_iZ_i^2)<\infty$, but this follows immediately since $\E(Z_i^2) = 1$. We also need to verify that $\sum_{i=1}^{\infty}\var(\lambda_iZ_i^2)<\infty$, which follows immediately because $\var(Z_i^2) = 2$, and $\sum_{i=1}^{\infty}\lambda_i<\infty$ implies that $\sum_{i=1}^{\infty}\lambda_i^2<\infty$.

$(\Longleftarrow)$ We proceed to prove that if $\sum_{i=1}^\infty\lambda_iZ_i^2$ converges almost surely, then $\sum_{i=1}^\infty \lambda_i<\infty$. Note that since $\sum_{i=1}^\infty\lambda_iZ_i^2$ converges almost surely, the Kolmogorov's three series theorem deduces that for any $A>0$, it holds
\begin{align*}
i)\quad\sum_{i=1}^\infty\Prob(\lambda_iZ_i^2\geq A)<\infty, \qquad\text{and}\qquad
ii)\quad \sum_{i=1}^\infty\lambda_i\E\left(Z_i^2\Ind_{\{\lambda_i Z_i^2\leq A\}}\right)<\infty.
\end{align*}
We use i) to deduce that the sequence $(\lambda_i)_{i=1}^\infty$ is bounded. Consider $A=1$, and suppose, for contradiction, that there exists a sub-sequence $(\lambda_{n_k})_{k=1}^\infty$ such that $\lambda_{n_k}\to\infty$ as $k\to\infty$. Then, there exists $N\in\mathbb{N}$ large enough such that $\Prob(\lambda_{n_k}Z_{n_k}^2\geq 1)\geq 1/2$ for all $k\geq N$, and thus $\sum_{k=1}^\infty\Prob(\lambda_{n_k}Z_{n_k}^2\geq 1)\to\infty$ which contradicts i). We conclude that $\max_k\lambda_k$ is bounded by some constant.

Let $C=\E\left(Z_i^2\Ind_{\{(\max_{k}\lambda_k)Z_i^2\leq 1\}}\right)$ which is independent of $i$, then
\begin{align*}
   C\sum_{i=1}^n\lambda_i = \sum_{i=1}^\infty\lambda_i\E\left(Z_i^2\Ind_{\{(\max_{k}\lambda_k) Z_i^2\leq 1\}}\right)\leq  \sum_{i=1}^\infty\lambda_i\E\left(Z_i^2\Ind_{\{\lambda_i Z_i^2\leq 1\}}\right)< \infty.
\end{align*}\end{proof}

\section{Deferred proofs}

We prove \Cref{Lemma:approx} that by following the same steps as the proof of Theorem 6 of \cite{shah2020hardness}.

\begin{proof}[Proof of \Cref{Lemma:approx}]
Let $\omega\in \mathcal H$ with $\LH{\omega}=1$. We start by writing $S_n(\omega)$ as
\begin{align*}
    S_n(\omega)&=\frac{1}{\sqrt{n}}\sum_{i=1}^n \widehat \epsilon_{X_i}(Z_i)\widehat \epsilon_{Y_i}(Z_i)\omega(Z_i)
    =\widetilde{S}_n(\omega)+\nu_f+\nu_g+b,
\end{align*}
where
\begin{align*}
    \nu_g&=\frac{1}{\sqrt{n}}\sum_{i=1}^n\epsilon_{X_i}(Z_i)(g(Z_i)-\widehat{g}(Z_i))\omega(Z_i),\quad \nu_f=\frac{1}{\sqrt{n}}\sum_{i=1}^n\epsilon_{Y_i}(Z_i)(f(Z_i)-\widehat{f}(Z_i))\omega(Z_i),\\
    b&=\frac{1}{\sqrt{n}}\sum_{i=1}^n(f(Z_i)-\widehat{f}(Z_i))(g(Z_i)-\widehat{g}(Z_i))\omega(Z_i).
\end{align*}
The conclusion of the lemma follows from proving that  $\nu_g=o_p(1)$, $\nu_f=o_p(1)$ and $b=o_p(1)$, where the hidden constant in the $o_p$ notation is independent of $\omega$.

By \Cref{Cond:condIndep}.iv,  we have $\omega(z)^2=\InerH{\omega}{K_z}^2\leq C$ then the Cauchy-Schwarz's inequality yields
\begin{align*}
    b^2&\leq \frac{1}{n}{\sum_{i=1}^n(f(Z_i)-\widehat{f}(Z_i))^2}{\sum_{i=1}^n(g(Z_i)-\widehat{g}(Z_i))^2\omega(Z_i)^2}\leq {nCA_fA_g}=o_p(1),
\end{align*}
where last equality holds since under \Cref{Cond:condIndep}.i. 
We continue with the terms $\nu_g$ and $\nu_f$. Note that conditioned on $(\boldsymbol{Y,Z})=(Y_i,Z_i)_{i=1}^n$, $\nu_g$ is a sum of i.i.d. zero-mean random variables with variance:
\begin{align*}
    \Var(\epsilon_{X_i}(Z_i)(g(Z_i)-\widehat{g}(Z_i))\omega(Z_i)|\boldsymbol{Y,Z})&=\E(\epsilon_{X_i}(Z_i)^2(g(Z_i)-\widehat{g}(Z_i))^2\omega(Z_i)^2|\boldsymbol{Y,Z})\\
    &=(g(Z_i)-\widehat{g}(Z_i))^2\omega(Z_i)^2 u_P(Z_i,Y_i)\leq C'(g(Z_i)-\widehat{g}(Z_i))^2\omega(Z_i)^2,
\end{align*}
where, $C'>0$ is a constant such that $u_P(z,y)\leq C'$ uniformly for all pairs $(z,y)$, which exists by  \Cref{Cond:condIndep}.ii. Therefore,
\begin{align*}
    \E(\nu_g^2|\boldsymbol{Y,Z})&=\frac{1}{n}\sum_{i=1}^nC'(g(Z_i)-\widehat{g}(Z_i))^2\omega(Z_i)^2\leq\frac{C'C}{n}\sum_{i=1}^n(g(Z_i)-\widehat{g}(Z_i))^2=o_p(1).
\end{align*}

Finally, for any $\delta>0$, we have that
\begin{align*}
    \Prob(\nu_g^2\geq \delta)=\Prob(\nu_g^2\wedge \delta\geq \delta)\leq\delta^{-1}\E(\nu_g^2\wedge \delta)=\delta^{-1}\E(\E(\nu_g^2\wedge \delta|\boldsymbol{Y,Z}))\leq\delta^{-1}\E(\E(\nu_g^2|\boldsymbol{Y,Z})\wedge \delta)\to0,
\end{align*}
where the limit follows from Lebesgue's dominated convergence theorem since  $\E(\nu_g^2|\boldsymbol{Y,Z})\wedge \delta \overset{\Prob}{\to} 0$ as $n$ grows to infinite. From there we deduce that $\nu_g=o_p(1)$. By replicating the argument for $\nu_f$, we obtain that $\nu_f= o_p(1)$ as well. Note that our arguments do not use any bounds depending on $\omega$.
\end{proof}

\end{document}